\newtheorem{theorem}{Theorem}[section]
\newtheorem{lemma}[theorem]{Lemma}
\newtheorem{proposition}[theorem]{Proposition}
\newtheorem{corollary}[theorem]{Corollary}
\newtheorem{conjecture}[theorem]{Conjecture}
\theoremstyle{remark}
\newtheorem{remark}[theorem]{Remark}
\theoremstyle{remark}
\numberwithin{equation}{section}
\begin{document}

\title{\bf On type-preserving representations of the four-punctured sphere group}

\author{Tian Yang}

\date{}

\maketitle

\begin{abstract}
We give counterexamples to a question of Bowditch that if a non-elementary type-preserving representation $\rho:\pi_1(\Sigma_{g,n})\rightarrow PSL(2;\mathbb R)$ of a punctured surface group sends every non-peripheral simple closed curve to a hyperbolic element, then must $\rho$ be Fuchsian. The counterexamples come from relative Euler class $\pm1$ representations of the four-punctured sphere group. We also show that the mapping class group action on each non-extremal component of the character space of type-preserving representations of the four-punctured sphere group is ergodic, which confirms a conjecture of Goldman for this case. The main tool we use is Kashaev-Penner's lengths coordinates of the decorated character spaces.
\end{abstract}


\section{Introduction}

Let $\Sigma_g$ be an oriented  closed surface of genus $g\geqslant 2.$ The $PSL(2,\mathbb R)$-representation space $\mathcal R(\Sigma_g)$  is the space of group homomorphisms $\rho:\pi_1(\Sigma_g)\rightarrow PSL(2,\mathbb R)$ from the fundamental group of $\Sigma_g$ into $PSL(2,\mathbb R),$ endowed with the compact open topology.  The Euler class $e(\rho)$ of $\rho$ is the Euler class of the associated $S^1$-bundle on $\Sigma_g,$ which satisfies the Milnor-Wood inequality
$2-2g\leqslant e(\rho) \leqslant 2g-2.$
In \cite{G1}, Goldman proved that equality holds if and only if $\rho$ is Fuchsian, i.e., discrete and faithful; and in \cite{G2}, he proved that the connected components of $\mathcal R(\Sigma_g)$ are indexed by the Euler classes. I.e., for each integer $k$ with $|k|\leqslant 2g-2,$ the representations of Euler class $k$ exist and form a connected component of $\mathcal R(\Sigma_g).$ The Lie group $PSL(2,\mathbb R)$ acts on $\mathcal R(\Sigma_g)$ by conjugation, and the quotient space
$$\mathcal M(\Sigma_g)=\mathcal R(\Sigma_g)/PSL(2,\mathbb R)$$
 is the character space of $\Sigma_g.$ Since the Euler classes are preserved by conjugation, the connected components of $\mathcal M(\Sigma_g)$ are also indexed by the Euler classes, i.e.,
 $$\mathcal M(\Sigma_g)=\coprod_{k=2-2g}^{2g-2}\mathcal M_k(\Sigma_g),$$
 where $\mathcal M_k(\Sigma_g)$ is the space of conjugacy classes of representations of Euler class $k$. By the results of Goldman\,\cite{G1,G2}, the extremal components $\mathcal M_{\pm(2-2g)}(\Sigma_g)$ are respectively identified with the Teichm\"uller space of $\Sigma_g$ and that of $\Sigma_g$ endowed with the opposite orientation.

The mapping class group $Mod(\Sigma_g)$ of $\Sigma_g$ is the group of isotopy classes of orientation preserving self-diffeomorphisms of $\Sigma_g.$ By the Dehn-Nielsen Theorem, $Mod(\Sigma_g)$ is naturally isomorphic to the group of positive outer-automorphisms $Out^+(\pi_1(\Sigma_g)),$ which acts on $\mathcal M(\Sigma_g)$ preserving the Euler classes. Therefore, $Mod(\Sigma_g)$ acts on each connected component of $\mathcal M(\Sigma_g).$ It is well known (see e.g. Fricke\,\cite{Fr}) that the $Mod(\Sigma_g)$-action is properly discontinuous on the extremal components $\mathcal M_{\pm(2-2g)}(\Sigma_g),$ i.e., the Teichm\"uller spaces, and the quotients are the Riemann moduli spaces of complex structures on $\Sigma_g.$ On the non-extremal components $\mathcal M_k(\Sigma_g),$ $|k|<2g-2,$ Goldman conjectured  in \cite{G3} that the $Mod(\Sigma_g)$-action is ergodic with respect to the measure induced by the Goldman symplectic form \cite{G4}.

 Closely related to Goldman's conjecture is a question of Bowditch\,\cite{Bo}, Question C, that whether for each non-elementary and non-extremal (i.e. non-Fuchsian) representation $\rho$ in $\mathcal R(\Sigma_g),$ there exists a simple closed curve $\gamma$ on $\Sigma_g$ such that $\rho([\gamma])$ is an elliptic or a parabolic element of $PSL(2,\mathbb R).$ Recall that a representation is non-elementary if its image is Zariski-dense in $PSL(2,\mathbb R).$ Recently, March\'e-Wolff\,\cite{MW} show that an affirmative answer to Bowditch's question implies that Goldman's conjecture is true. More precisely, they show that for $(g,k)\neq (2,0),$ $Mod(\Sigma_g)$ acts ergodically on the subset of $\mathcal M_k(\Sigma_g)$ consisting of representations that send some simple closed curve on $\Sigma_g$ to an elliptic or parabolic element. Therefore, when $(g,k)\neq (2,0),$ the $Mod(\Sigma_g)$-action on $\mathcal M_k(\Sigma_g)$ is ergodic if and only if the subset above has full measure in $\mathcal M_k(\Sigma_g).$ In the same work, they answer Bowditch's question affirmatively for the genus $2$ surface $\Sigma_2,$ implying the ergodicity of the $Mod(\Sigma_2)$-action on $\mathcal M_{\pm1}(\Sigma_2).$ For the action on  the component $\mathcal M_0(\Sigma_2),$ they find two $Mod(\Sigma_2)$-invariant open subsets due to the existence of the hyper-elliptic involution, and show that on each of them the $Mod(\Sigma_2)$-action is ergodic. For higher genus surfaces $\Sigma_g,$ $g\geqslant 3,$ Souto \cite{S} recently gives an affirmative answer to Bowditch's question for the Euler class $0$ representations, proving the ergodicity of the $Mod(\Sigma_g)$-action on $\mathcal M_0(\Sigma_g).$ For $g\geqslant3$ and $k\neq 0,$ both Bowditch's question and Goldman's conjecture are still open.
 \\

Bowditch's question was originally asked for the type-preserving representations of punctured surface groups. Recall that a punctured surface $\Sigma_{g,n}$ of genus $g$ with $n$ punctures is a closed surface $\Sigma_g$ with $n$ points removed. Through out this paper, we required that the Euler characteristic of $\Sigma_{g,n}$ is negative. A \emph{peripheral element} of $\pi_1(\Sigma_{g,n})$ is an element that is represented by a curve freely homotopic to a circle that goes around a single puncture of $\Sigma_{g,n}.$ A representation $\rho:\pi_1(\Sigma_{g,n})\rightarrow PSL(2,\mathbb R)$ is called \emph{type-preserving} if it sends every peripheral element of $\pi_1(\Sigma_{g,n})$ to a parabolic element of $PSL(2,\mathbb R).$ In \cite{Bo}, Question C asks whether it is true that if a non-elementary type-preserving representation of a punctured surface group sends every non-peripheral simple closed curve to a hyperbolic element, then $\rho$ must be Fuchsian.

The main result of this paper gives counterexamples to this question. To state the result, we recall that there is the notion of relative Euler class $e(\rho)$ of a type-preserving representation $\rho$ that satisfies the Milnor-Wood inequality
$$|e(\rho)|\leqslant2g-2+n,$$
and equality holds if and only if $\rho$ is Fuchsian (see \cite{G1, G2} and also Proposition \ref{MF}).

\begin{theorem}\label{main1} There are uncountably many non-elementary type-preserving representations $\rho:\pi_1(\Sigma_{0,4})\rightarrow PSL(2,\mathbb R)$ with relative Euler class $e(\rho)=\pm1$ that send every non-peripheral simple closed curve to a hyperbolic element. In particular, these representations are not Fuchsian.\end{theorem}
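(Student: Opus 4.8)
The idea is to read off the relative Euler class $\pm1$ locus from the lengths coordinates, translate ``every non-peripheral simple closed curve is hyperbolic'' into a statement about the Farey tree of curves on $\Sigma_{0,4}$, and then run a Bowditch-type invariance argument that happens to be especially clean here.

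First, some bookkeeping. The non-peripheral simple closed curves on $\Sigma_{0,4}$ are indexed by slopes in $\widehat{\mathbb Q}$, and the unordered triples of curves pairwise intersecting in two points are the triangular faces of the Farey graph; the mapping class group acts as a finite-index subgroup of $PGL(2,\mathbb Z)$. Fixing one such triple $\{\gamma_0,\gamma_1,\gamma_\infty\}$ and writing $(x,y,z)=(\operatorname{tr}\rho(\gamma_0),\operatorname{tr}\rho(\gamma_1),\operatorname{tr}\rho(\gamma_\infty))$, the Kashaev--Penner lengths-coordinate description identifies the $SL(2,\mathbb R)$-characters of type-preserving representations of relative Euler class $\pm1$ with the real points of the Fricke cubic surface $\Sigma=\{x^2+y^2+z^2+xyz=4\}$ (the cubic attached to the four boundary traces $(-2,-2,-2,2)$; this sign pattern is the one the lengths computation shows carries odd relative Euler class, hence $\pm1$ by Milnor--Wood). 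Crossing a Farey edge replaces one coordinate by the other root of the corresponding quadratic, e.g.\ $z\mapsto -xy-z$, and the traces of \emph{all} non-peripheral simple closed curves are exactly the coordinates occurring in the orbit of $(x,y,z)$ under the group generated by these Vieta involutions together with the coordinate permutations. Thus Theorem \ref{main1} reduces to producing uncountably many points of $\Sigma$ whose orbit avoids the locus $N=\{|x|\le2\}\cup\{|y|\le2\}\cup\{|z|\le2\}$.

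The core observation is that the region $R=\Sigma\cap\{x>2,\ y>2,\ z<-2\}$ is invariant under the dynamics away from a thin set. Indeed, for $x,y>2$ the quadratic $w^2+xy\,w+(x^2+y^2-4)$ has two negative roots, and both are $<-2$ exactly when $x\ne y$ (its value at $-2$ is $(x-y)^2$); and for $y>2,\ z<-2$ the quadratic $w^2+yz\,w+(y^2+z^2-4)$ has two positive roots, both $>2$ exactly when $y\ne -z$ (its value at $2$ is $(y+z)^2$). Hence, as long as the three positive numbers $x,\ y,\ -z$ stay pairwise distinct, each Vieta move sends a triple of $R$ to a triple of $R$; by induction the pattern ``two coordinates $>2$, one coordinate $<-2$'' propagates along the whole Farey tree, so every trace in the orbit lies strictly outside $[-2,2]$. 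The one technical point — and where care is needed — is to excise the starting points for which this distinctness fails somewhere along the tree, i.e.\ those whose orbit meets one of the three curves $\{x=y\}$, $\{x=-z\}$, $\{y=-z\}$ on $\Sigma$: these exceptional points form a countable union of proper real-analytic subsets of $R$, since the dynamics acts by polynomial automorphisms of $\Sigma$ and each automorphism carries a curve to a curve. (A secondary point, handled by the cited lengths-coordinate material, is the identification of $\Sigma$ with the relative Euler class $\pm1$ component in the first place.)

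Finally, $R$ is nonempty and $2$-dimensional — it contains the two graphs $z=\tfrac12\big(-xy\pm\sqrt{(x^2-4)(y^2-4)}\big)$ over $\{x,y>2\}$ — so deleting a countable union of curves leaves an uncountable subset $R'$. Every point of $R'$ has $|x|,|y|,|z|>2$, hence is the character of a genuine $PSL(2,\mathbb R)$-representation $\rho$ (a real character with a hyperbolic element cannot come from $SU(2)$); $\rho$ is type-preserving because the boundary traces are $\pm2$, and it is non-elementary because $\rho(\gamma_0)$ is hyperbolic, so $\rho$ is irreducible, while the only elementary type-preserving representations of $\pi_1(\Sigma_{0,4})$ are reducible ones, for which all of $\gamma_0,\gamma_1,\gamma_\infty$ would be parabolic. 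By construction every non-peripheral simple closed curve is sent to a hyperbolic element, and since the relative Euler class is $\pm1\ne\pm2=\pm(2g-2+n)$, Proposition \ref{MF} shows $\rho$ is not Fuchsian. This yields the required uncountable family, proving Theorem \ref{main1}.
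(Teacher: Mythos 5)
Your argument is correct, but it takes a genuinely different route from the proof in Section \ref{Sec:5}: it is essentially the trace--coordinate argument of Appendix \ref{B} (Goldman's one-holed-torus analysis transported to $\Sigma_{0,4}$, as in \cite{G5,MPT}), carried out in detail. You work on the cubic $x^2+y^2+z^2+xyz=4$ with the Vieta moves along the Farey tree, and your invariant region $\{x>2,\ y>2,\ z<-2\}$ (justified by the values $(x-y)^2$ and $(y+z)^2$ of the relevant quadratics at $\mp2$, which I have checked) plays exactly the role that the anti-triangular inequalities (\ref{anti-tri}) play in the paper; your Vieta moves correspond to the simultaneous diagonal switches acting via Lemma \ref{lambda'}, your invariance claim corresponds to Lemma \ref{preserve}, and your countable excision of the orbit preimages of $\{x=y\}\cup\{x=-z\}\cup\{y=-z\}$ corresponds to the Laurent-polynomial zero loci excised in Proposition \ref{5.5}. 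The trace route is shorter and bypasses the decorated character space entirely, but it must import as known facts the identification of the relative Euler class $\pm1$ locus with this particular cubic (i.e.\ that a lift with $tr\widetilde\rho(A)\,tr\widetilde\rho(B)\,tr\widetilde\rho(C)\,tr\widetilde\rho(D)<0$ has odd relative Euler class), the realization of every real point of the cubic by an actual representation, and the Farey/Vieta dictionary for simple closed curves; the lengths-coordinate proof reads the Euler class and the parabolicity of the peripherals directly off (\ref{euler}) and Lemma \ref{5.1}, and the same machinery then also carries the Euler class $0$ case and Sections \ref{Sec:6}--\ref{Sec:7}, which the trace picture for this one cubic does not.

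Two steps should be tightened. First, ``$\rho(\gamma_0)$ is hyperbolic, so $\rho$ is irreducible'' is not valid as stated: reducible representations contain hyperbolic elements. The correct (and easy) argument is the one you gesture at in the same sentence: a reducible type-preserving representation is conjugate into upper triangular matrices whose peripheral images have diagonal entries $\pm1$, so every simple closed curve then has trace $\pm2$, contradicting $|x|>2$; this gives both the irreducibility needed to conjugate the real character into $SL(2,\mathbb R)$ rather than $SU(2)$, and the non-elementarity. Second, ``type-preserving because the boundary traces are $\pm2$'' needs the one-line addendum ruling out a peripheral element mapping to $\pm I$: if some peripheral image were central, then one of $x,y,z$ would equal $\pm2$ (e.g.\ $\widetilde\rho(A)=\pm I$ gives $x=\pm\,tr\widetilde\rho(B)$, and $\widetilde\rho(D)=\pm I$ gives $x=\pm\,tr\widetilde\rho(C^{-1})$), again contradicting your inequalities. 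With these repairs the proposal is a complete proof of Theorem \ref{main1}.
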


Our method is to use Penner's lengths coordinates for the decorated character space defined by Kashaev in \cite{Ka1}. Briefly speaking, decorated character space of a punctured surface is the space of conjugacy classes of \emph{decorated representations}, namely, non-elementary type-preserving representations together with an assignment of \emph{horocycles} to the punctures. The lengths coordinate of a decorated representation depend on a choice of an ideal triangulation of the surface, and consists of the \emph{$\lambda$-lengths} of the edges determined by the horocycles, and of the \emph{signs} of the ideal triangles determined by the representation. The decorated Teichm\"uller space is a connected component of the decorated character space, and the restriction of the lengths coordinates to this component coincide with Penner's lengths coordinates. (See \cite{Ka1, Ka2} or Section \ref{Sec:2} for more details.) A key ingredient in the proof is Formula (\ref{trace}) of the traces of closed curves in the lengths coordinates, found in \cite{SY} by Sun and the author.  With a careful choice of an ideal triangulation of the four-punctured sphere, called a \emph{tetrahedral triangulation}, we show that the traces of three \emph{distinguished simple closed curves} are greater than $2$ in the absolute value if and only if the $\lambda$-lengths of edges in this triangulation satisfy certain anti-triangular inequalities. We then show that each simple closed curve is distinguished in some tetrahedral triangulation, and all tetrahedral triangulations are related by a sequence of moves, called the \emph{simultaneous diagonal switches}. By the change of $\lambda$-lengths formula (Proposition \ref{change}), we show that the anti-triangular inequalities are preserved by the simultaneous diagonal switches. Therefore, if the three distinguished simple closed curves are hyperbolic, then all the simple closed curves are hyperbolic. Finally, we show that there are uncountably many choices of the $\lambda$-lengths that satisfy the anti-triangular inequalities.

A consequence of Formula (\ref{trace}) is Theorem \ref{dominance} that each non-Fuchsian type-preserving representation is dominated by a Fuchsian one, in the sense that the traces of the simple closed curves of the former representation are less than or equal to those of the later in the absolute value. This is a counterpart of the result of Gueritaud-Kassel-Wolff\,\cite{GKW} and Deroin-Tholozan\,\cite{DT}, where they consider dominance of closed surface group representations.

Using the same technique, we also give an affirmative answer to Bowditch's question for the relative Euler class $0$ type-preserving representations of the four-punctured sphere group.

\begin{theorem}\label{main2} Every non-elementary type-preserving representation $\rho:\pi_1(\Sigma_{0,4})\rightarrow PSL(2,\mathbb R)$ with relative Euler class $e(\rho)=0$ sends some non-peripheral simple closed curve to an elliptic or parabolic element.
\end{theorem}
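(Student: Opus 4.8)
\section*{Proof strategy for Theorem \ref{main2}}

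The plan is to argue by contradiction. Suppose $\rho$ is non-elementary, type-preserving, has relative Euler class $e(\rho)=0$, and sends every non-peripheral simple closed curve to a hyperbolic element. Since $e(\rho)$ is even, $\rho$ lifts to a representation $\widetilde\rho:\pi_1(\Sigma_{0,4})\to SL(2,\mathbb R)$ whose four peripheral elements all have trace $-2$; writing $x,y,z$ for the traces under $\widetilde\rho$ of the three distinguished simple closed curves of a fixed tetrahedral triangulation $\Delta$, the triple $(x,y,z)$ then lies on the Fricke cubic
\[
S=\{(x,y,z)\in\mathbb R^3:\ x^2+y^2+z^2+xyz=8(x+y+z)-28\}.
\]
Decorating $\rho$ by an arbitrary choice of horocycles, I would pass to Kashaev--Penner lengths coordinates for $\Delta$: positive $\lambda$-lengths $\lambda_1,\dots,\lambda_6$ and signs $\sigma_1,\dots,\sigma_4\in\{\pm1\}$. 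Because the relative Euler class is computed by the same formula in every ideal triangulation and equals $0$, exactly two of the $\sigma_i$ equal $+1$.

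The first step is a structural observation obtained from Formula (\ref{trace}) with exactly two positive signs: for every tetrahedral triangulation, if all three of its distinguished curves are hyperbolic then exactly one of the corresponding traces is less than $-2$ and the other two are greater than $2$. Indeed, after the substitution $u=x-2$, $v=y-2$, $w=z-2$ the surface $S$ becomes $(u+v+w)^2=-uvw$, so $uvw\le0$ on $S$; hence, when $|x|,|y|,|z|>2$, an odd number of $x,y,z$ are less than $-2$, and the possibility that all three are less than $-2$ is ruled out because a direct computation with Formula (\ref{trace}) shows this to be incompatible with exactly two signs being $+1$ (geometrically, $\{x,y,z<-2\}\cap S$ is the Fuchsian locus, of relative Euler class $\pm2$). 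This is precisely where the hypothesis $e(\rho)=0$---equivalently, the two-positive-signs condition---is used, and where the situation departs from the $e=\pm1$ case of Theorem \ref{main1}. After permuting the punctures (which permutes the distinguished curves) we may assume $x,y>2>-2>z$.

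Next I would invoke two combinatorial facts already used for Theorem \ref{main1}: every simple closed curve is a distinguished curve of some tetrahedral triangulation, and any two tetrahedral triangulations are joined by a finite sequence of simultaneous diagonal switches. Under such a switch the $\lambda$-lengths transform by Proposition \ref{change}, while the three traces transform by the associated Vieta involutions on $S$---for example, the switch replacing $\gamma_z$ fixes $x$ and $y$ and sends $z$ to the other root $8-xy-z$ of the defining relation regarded as a quadratic in $z$. Since by assumption every simple closed curve is hyperbolic, the three distinguished curves of every tetrahedral triangulation are hyperbolic, so by the previous step every triple reached in this way again has the pattern ``two coordinates greater than $2$, one coordinate less than $-2$''. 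I would then run a descent: starting from $(x,y,z)$ with $x,y>2>-2>z$ and performing simultaneous diagonal switches chosen to decrease a suitable monovariant---concretely $\max(|x|,|y|,|z|)$, or equivalently an appropriate combination of the $\lambda$-lengths---one is driven either into the cube $[-2,2]^3$, where one of the distinguished curves is elliptic or parabolic, contradicting the assumption, or to a terminal configuration at which no switch lowers the monovariant. The proof is then finished by classifying the terminal configurations on $S$: each one either lies in $[-2,2]^3$ (again a contradiction) or lies in the Fuchsian region $\{x,y,z\le-2\}$, which forces $e(\rho)=\pm2$, contrary to $e(\rho)=0$. Hence $\rho$ sends some non-peripheral simple closed curve to an elliptic or parabolic element.

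I expect the descent step to be the main obstacle. In contrast with the anti-triangular inequalities of Theorem \ref{main1}, which are preserved by simultaneous diagonal switches, in the $e=0$ regime no single inequality on the $\lambda$-lengths of one triangulation persists under switches; the obstruction is genuinely dynamical and emerges only after iteration. Making it rigorous requires a careful analysis of how Proposition \ref{change} interacts with the constraint that exactly two of the $\sigma_i$ equal $+1$---exhibiting at each non-terminal configuration an explicit simultaneous diagonal switch that decreases the monovariant, and then verifying that the terminal configurations compatible with all three distinguished curves being hyperbolic are exactly the Fuchsian ones. Equivalently, one must show that the only representations mapping into $S$ that send every simple closed curve to a hyperbolic element are the Fuchsian ones, so that a non-Fuchsian (hence $e(\rho)=0$) representation always has a simple closed curve whose trace is forced into $[-2,2]$.
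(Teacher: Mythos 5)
Your overall strategy (trace coordinates on the relative character variety, Vieta involutions induced by simultaneous diagonal switches, descent until one trace lands in $[-2,2]$) is a legitimate reformulation of the paper's approach, but as written it has a genuine gap exactly at the step you yourself flag as the ``main obstacle'': the termination of the descent. Saying that one performs switches ``chosen to decrease a suitable monovariant---concretely $\max(|x|,|y|,|z|)$'' and is then ``driven either into the cube $[-2,2]^3$ \dots or to a terminal configuration'' is not an argument: a strictly decreasing quantity bounded below can converge to a limit strictly larger than $2$ without the orbit ever meeting $[-2,2]^3$, and in this Markov-type dynamics nothing a priori excludes an infinite orbit along which all three coordinates stay hyperbolic. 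This is precisely the content of the paper's Trace Reduction Algorithm and Lemma \ref{finite}, whose proof is the technical heart of Theorem \ref{main2}: one shows (in the $\lambda$-length quantities $\lambda(x),\lambda(y),\lambda(z)$, using Lemma \ref{lambda2}) that the defect $k_n$ from the square-root triangle inequalities of Lemma \ref{5.8} decreases, but that alone is not enough --- the paper needs two further steps (infinitely many steps of a good type must occur, and along those the minimum of the normalized coordinates increases) to force $k_n\leqslant 0$ in finite time. Your proposal contains no substitute for this analysis, so the core of the proof is missing rather than merely deferred; note also that your ``classification of terminal configurations'' is essentially a restatement of the theorem (plus the Fuchsian case), not something that follows from the setup.

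A second, smaller gap is the structural claim that on the $e(\rho)=0$ component one always has exactly one trace $<-2$ and two $>2$, the all-negative case being ``ruled out because a direct computation with Formula (\ref{trace})'' is incompatible with two positive signs. Formula (\ref{trace}) computes only $\big|tr\rho([\gamma])\big|$ for the $PSL(2,\mathbb R)$-representation, so it cannot by itself control the signs of traces of a chosen $SL(2,\mathbb R)$-lift; what you actually need is that $\{x,y,z<-2\}\cap S$ consists exclusively of extremal (Fuchsian, $e=\pm2$) characters, which is a nontrivial fact requiring either its own proof or a citation (it is closely related to Proposition \ref{MF} and to Goldman's Theorem D), not a byproduct of the lengths-coordinate trace formula. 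Incidentally, the paper's proof of Theorem \ref{main2} does not need this sign analysis at all: it works directly with Lemma \ref{5.8}, where the $Y$- and $Z$-traces are automatically $>2$ in absolute value and only the $X$-trace can fail to be hyperbolic, which is what makes the choice of switch in the algorithm canonical. If you want to salvage your route, you should either import the paper's Lemma \ref{finite} wholesale or prove an honest termination statement for the Vieta dynamics on the cubic $S$ restricted to the $e=0$ locus; without one of these the argument does not close.
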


In contrast with the connected components of the character space of a closed surface, those of a punctured surface are more subtle to describe. For $\Sigma_{g,n}$ with $n\neq 0,$ denote by $\mathcal M_k(\Sigma_{g,n})$ be the space of conjugacy classes of type-preserving representations  with relative Euler class $k.$ As explained in \cite{Ka1},  $\mathcal M_k(\Sigma_{g,n})$ can be either empty or non-connected. For example, $\mathcal M_0(\Sigma_{0,3})=\mathcal M_0(\Sigma_{1,1})=\emptyset.$ The non-connectedness of  $\mathcal M_k(\Sigma_{g,n})$ comes from the existence  of two distinct conjugacy classes of parabolic elements of $PSL(2,\mathbb R).$ More precisely, each parabolic element of $PSL(2,\mathbb R)$ is up to $\pm I$ conjugate to an upper triangular matrix with trace $2,$ and its conjugacy class is distinguished by whether the sign of the non-zero off diagonal element is positive or negative.  Therefore, two type-preserving representations of the same relative Euler class which respectively send the same peripheral element into different conjugacy classes of parabolic elements  cannot be in the same connected components. Throughout this paper, we respectively call the two conjugacy class of parabolic elements the \emph{positive} and the \emph{negative} conjugacy classes. For a type-preserving representation $\rho:\pi_1(\Sigma_{g,n})\rightarrow PSL(2,\mathbb R),$ we say that the \emph{sign} of a puncture $v$ is \emph{positive}, denoted by $s(v)=1,$ if $\rho$ sends a peripheral element around this puncture into the positive conjugacy class of parabolic elements, and is \emph{negative}, denoted by $s(v)=-1,$ if otherwise. For an $s\in\{\pm1\}^n,$ we denote by  $\mathcal M_k^s(\Sigma_{g,n})$ the space of conjugacy classes of type-preserving representations with relative Euler class $k$ and signs of the punctures $s.$ It is conjectured in \cite{Ka1} that each $\mathcal M_k^s(\Sigma_{g,n}),$ if non-empty, is connected. The result confirms this for the four-punctured sphere.

\begin{theorem}\label{main3} Let $s\in\{\pm 1\}^4.$ Then
\begin{enumerate}[(1)]
\item $\mathcal M_0^s(\Sigma_{0,4})$ is non-empty if and only if $s$ contains exactly two $-1$ and two $1,$

\item $\mathcal M_{1}^s(\Sigma_{0,4})$  is non-empty if and only if $s$ contains at most one $-1,$

\item $\mathcal M_{-1}^s(\Sigma_{0,4})$ is non-empty if and only if $s$ contains at most one $1,$ and

\item all the non-empty spaces above are connected.
\end{enumerate}
\end{theorem}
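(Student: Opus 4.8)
The plan is to carry out everything in Kashaev--Penner's lengths coordinates on a \emph{tetrahedral} triangulation $\mathcal T$ of $\Sigma_{0,4}$, using two structural facts recorded in Section~\ref{Sec:2}: first, for a tetrahedral triangulation the relative Euler class of a decorated representation is determined by the signs $\sigma(t)\in\{\pm1\}$ of the four ideal triangles, namely $e(\rho)=\tfrac12\sum_t\sigma(t)$ (equivalently, $2$ minus the number of negative triangles); second, the sign $s(v)$ of a puncture $v$ is the sign of the total signed horocyclic length $\sum_{t\ni v}\sigma(t)\,h_v(t)$ around $v$, where $h_v(t)=\lambda_a/(\lambda_b\lambda_c)>0$ is expressed through the $\lambda$-lengths $a,b,c$ of $t$ with $a$ opposite $v$. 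Labeling the six edges of the tetrahedron on the punctures $\{1,2,3,4\}$ by $\lambda_{ij}$ and setting $A=\lambda_{14}\lambda_{23}$, $B=\lambda_{13}\lambda_{24}$, $C=\lambda_{12}\lambda_{34}$, one clears denominators in the second fact so that each $s(v)$ becomes the sign of one of $\pm A\pm B\pm C$; concretely, for $e(\rho)=1$ with negative triangle $\{1,2,3\}$ one gets $s(4)=1$ and $s(1)=\mathrm{sign}(B+C-A)$, $s(2)=\mathrm{sign}(A+C-B)$, $s(3)=\mathrm{sign}(A+B-C)$, while for $e(\rho)=0$ with the two positive triangles sharing the edge $\lambda_{14}$ and the two negative ones sharing $\lambda_{23}$ one gets $s(1)=s(4)=\mathrm{sign}(B+C-A)$ and $s(2)=s(3)=-\mathrm{sign}(B+C-A)$.

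For the non-emptiness assertions in (1)--(3) I would, for each admissible $(k,s)$, choose explicit positive values of $A,B,C$ (hence of the $\lambda_{ij}$) realizing the prescribed signs --- e.g. all $\lambda_{ij}=1$ gives all $s(v)=+1$ when $e(\rho)=1$, while taking $A$ large and $B,C$ small produces a single $-1$ --- and invoke the symmetry of $\Sigma_{0,4}$ under permuting the punctures and under orientation reversal (which sends $(k,s)\mapsto(-k,-s)$) to cover the remaining cases. For the emptiness assertions the same formulas give the obstruction by pigeonhole: two of $B+C-A$, $A+C-B$, $A+B-C$ cannot both be negative, so when $e(\rho)=1$ at most one of $s(1),s(2),s(3)$ is $-1$ and $s(4)=1$; when $e(\rho)=0$ the displayed identities force exactly two $+1$'s and two $-1$'s among the $s(v)$; and $e(\rho)=-1$ follows by orientation reversal. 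This uses that every non-elementary type-preserving representation has lengths coordinates in some tetrahedral triangulation, so that the obstruction applies to all of $\mathcal M^s_k(\Sigma_{0,4})$.

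For part (4) I would present the decorated space $\widetilde{\mathcal M}^s_k(\Sigma_{0,4})$ as the union, over tetrahedral triangulations $\mathcal T$ and sign assignments $\sigma$ compatible with $(k,s)$, of the cells $\{(\lambda_e)\in\mathbb R_{>0}^6:\text{the sign conditions cut out }s\}$. Each such cell is connected: in logarithmic coordinates it is star-shaped toward the direction that sends the relevant $\lambda$-length monomial to $+\infty$, so it deformation retracts onto a ray. Moreover, any two tetrahedral triangulations are joined by a sequence of simultaneous diagonal switches, and by the change-of-$\lambda$-lengths formula (Proposition~\ref{change}) such a switch carries a cell of $\mathcal T$ onto a cell of the switched triangulation overlapping it in an open subset of $\widetilde{\mathcal M}^s_k(\Sigma_{0,4})$, with $k$ and the signs $s$ preserved. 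Hence the adjacency graph of the cells realizing a fixed $(k,s)$ is connected, so $\widetilde{\mathcal M}^s_k(\Sigma_{0,4})$ is connected, and therefore so is its image $\mathcal M^s_k(\Sigma_{0,4})$ under forgetting the decoration.

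The hardest part will be (4): one must check that the cells coming from different triangulations, and from different choices of which triangle is negative, genuinely glue under the simultaneous diagonal switches rather than merely abutting along degenerate loci, and that the switches act transitively in a way compatible with the prescribed puncture signs --- the change-of-coordinates formula of Proposition~\ref{change} is the tool for this, but tracking how the signs $\sigma$ transform under a switch is where the care is needed. By contrast, (1)--(3) reduce to the elementary inequalities recorded above.
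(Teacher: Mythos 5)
Your skeleton is essentially the paper's: express the puncture signs as the signs of $\lambda(y)+\lambda(z)-\lambda(x)$ and its permutations (your formulas in $A,B,C$ agree with Lemmas \ref{5.1} and \ref{57}, which the paper derives from the holonomy computation of Lemma \ref{3.1} rather than from anything recorded in Section \ref{Sec:2}), get emptiness by the pigeonhole fact that two of these expressions cannot both be negative, get non-emptiness from explicit $\lambda$-lengths plus the symmetries, and get connectedness by chaining cells via simultaneous diagonal switches. The first genuine gap is your appeal to the claim that \emph{every} non-elementary type-preserving representation admits an admissible tetrahedral triangulation. That claim is not proved anywhere and should not be assumed: Theorem \ref{Kashaev1} only provides finitely many admissible ideal triangulations of unspecified combinatorial type, the paper establishes admissibility of all tetrahedral triangulations only on a full-measure subset (Propositions \ref{5.5} and \ref{5.11}), and the proof of Theorem \ref{main2} explicitly allows a tetrahedral triangulation to be non-admissible. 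What is actually needed, and what the paper uses, is weaker: $\mathcal M_{\mathcal T}(\Sigma_{0,4})$ is open and dense, and $k$ and $s$ are locally constant so each $\mathcal M_k^s(\Sigma_{0,4})$ is open; hence $\mathcal M_k^s\neq\emptyset$ if and only if it meets $\mathcal M_{\mathcal T}$ for one fixed tetrahedral $\mathcal T$, and for (4) it suffices to connect a dense subset, since a space containing a dense connected subset is connected. (For non-emptiness you should also add the one-line check, via Proposition \ref{rational}, that your chosen $(\lambda,\epsilon)$ is type-preserving, i.e. all four peripheral entries are nonzero; your strict sign choices give this.)

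The second gap is in (4), which you flag but do not close, and the mechanism you describe is not the right one. Cells of the \emph{same} triangulation with different sign vectors $\epsilon$ are disjoint subsets of the character space (Theorem \ref{Kashaev2} and Corollary \ref{Delta}), so no switch makes two such cells ``overlap in an open subset''; moreover a switch does not carry a cell of $\mathcal T$ onto a single cell of $\mathcal T'$ — a cell is generally split along loci such as $\lambda(y)=\lambda(z)$ and distributed among several cells of $\mathcal T'$, with the new $\epsilon'$ determined by the case analysis (2.1)/(2.2) of Proposition \ref{change}. The argument that actually works, and is the content of the paper's proof, is computational: use the transformation rules for $(\lambda(x),\lambda(y),\lambda(z))$ and for $\epsilon$ under $S_x,S_y,S_z$ (Lemmas \ref{lambda'}, \ref{preserve} and \ref{lambda2}) to choose one point in each of the two disjoint cells with a suitable auxiliary inequality (e.g. $a'>b'$), check that both points are carried into one and the same connected cell of the switched triangulation (e.g. $\Delta_x(\mathcal T',\epsilon'_{12})$ for $\mathcal M_0^{s_{12}}$, or $\Delta_z(\mathcal T',\epsilon'_4)$ and $\Delta^c(\mathcal T',\epsilon'_4)$ in the Euler class $\pm1$ cases), and join them by a path inside that single cell; repeating this for the finitely many pairs of cells gives connectedness. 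Until those sign-tracking case checks are carried out, part (4) is announced rather than proved in your proposal.
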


As a consequence of Theorem \ref{main3}, $\mathcal M_0(\Sigma_{0,4})$ has six connected components and each of  $\mathcal M_{\pm1}(\Sigma_{0,4})$ has five connected components. The main tool we use in the proof is still the lengths coordinates; and we hope that the technique could be used for the other punctured surfaces. 

The mapping class group $Mod(\Sigma_{g,n})$ of a punctured surface $\Sigma_{g,n}$ is the group of relative isotopy classes of orientation preserving self-diffeomorphisms of $\Sigma_{g,n}$ that fix the punctures. By the Dehn-Nielsen Theorem, $Mod(\Sigma_{g,n})$ is isomorphic to the group of positive outer-automorphisms $Out^+(\pi_1(\Sigma_{g,n}))$ that preserve the cyclic subgroups  of $\pi_1(\Sigma_{g,n})$ generated by the peripheral elements, and hence acts on $\mathcal M(\Sigma_{g,n})$ preserving the relative Euler classes and the signs of the punctures. Therefore, for any integer $k$ with $|k|\leqslant 2g-2+n$ and for any $s\in\{\pm 1\}^n,$ $Mod(\Sigma_{g,n})$ acts on  $\mathcal M_k^s(\Sigma_{g,n}).$ For the four-punctured sphere, we have

\begin{theorem}\label{main4} The $Mod(\Sigma_{0,4})$-action on each non-extremal connected component of $\mathcal M(\Sigma_{0,4})$ is ergodic.
\end{theorem}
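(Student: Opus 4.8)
The plan is to invoke the criterion of March\'e–Wolff as cited in the introduction: for $(g,k)\neq(2,0)$ — which is satisfied here since $g=0$ — the $Mod(\Sigma_{0,4})$-action on $\mathcal M_k^s(\Sigma_{0,4})$ is ergodic provided that the subset consisting of representations sending \emph{some} non-peripheral simple closed curve to an elliptic or parabolic element has full measure. Actually their argument shows that $Mod$ acts ergodically on that subset itself, so it suffices to show its complement has measure zero in each non-extremal component. By the classification in Theorem \ref{main3}, the non-extremal components are precisely the six components of $\mathcal M_0(\Sigma_{0,4})$ and the four non-extremal components of each $\mathcal M_{\pm1}(\Sigma_{0,4})$ (the remaining one in each, with all signs equal, being the decorated Teichm\"uller-type extremal piece once we account for orientation).

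First I would dispose of the relative Euler class $0$ case. By Theorem \ref{main2}, \emph{every} non-elementary type-preserving representation with $e(\rho)=0$ already sends some non-peripheral simple closed curve to an elliptic or parabolic element; since the non-elementary representations are full measure (the elementary ones forming a positive-codimension subset), the ``bad set'' is all of $\mathcal M_0^s(\Sigma_{0,4})$ and ergodicity on each of its six components follows immediately from March\'e–Wolff.

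The substantive case is $e(\rho)=\pm1$. Here Theorem \ref{main1} tells us the bad set is nonempty — indeed uncountable — so the argument must be measure-theoretic rather than by exhaustion. The approach is to work in the lengths coordinates on the decorated character space: a tetrahedral triangulation gives a chart in which a component of $\mathcal M_{\pm1}^s(\Sigma_{0,4})$ is cut out by the sign data and an open region in $\lambda$-length space, and the anti-triangular inequalities (whose satisfaction characterizes, via Formula (\ref{trace}), when the three distinguished curves are all hyperbolic) carve out the candidate part of the bad set. I would show that the full bad set — representations for which \emph{all} non-peripheral simple closed curves are hyperbolic — is contained in the locus where, simultaneously across \emph{every} tetrahedral triangulation, the anti-triangular inequalities hold; one then argues this locus is a proper real-analytic (in fact semialgebraic) subvariety of positive codimension in each non-extremal component, hence has measure zero. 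The key point making this plausible is that imposing the anti-triangular inequalities in infinitely many triangulations related by simultaneous diagonal switches should, after the change-of-coordinates formula of Proposition \ref{change}, force enough algebraic relations among the $\lambda$-lengths to drop the dimension; alternatively one shows directly that generically at least one distinguished curve in some triangulation has trace in $(-2,2)$ by a real-analyticity argument, since the trace functions are non-constant real-analytic functions on the component.

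The main obstacle I anticipate is precisely this last dimension/measure count: Theorem \ref{main1} produces an uncountable bad set, so one cannot be cavalier — one must genuinely verify that this set, while uncountable, is nowhere dense and null. I would attack it by picking a single non-peripheral simple closed curve $\gamma$, writing its trace as an explicit real-analytic function $f_\gamma$ on the component (via Formula (\ref{trace}) in a fixed tetrahedral triangulation in which $\gamma$ is distinguished), and observing that $f_\gamma$ is non-constant there — exhibiting one representation in the component with $|\mathrm{tr}\,\rho(\gamma)|<2$ suffices, and such a representation is easy to build by hand or by taking a suitable limit. Then $\{|f_\gamma|\geqslant 2\}$ has measure zero by real-analyticity, and the bad set is contained in it, completing the proof. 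The care needed is in the bookkeeping of which component contains which sign pattern and in checking that the chosen curve is genuinely non-peripheral and simple on $\Sigma_{0,4}$; these are routine given the tetrahedral triangulation machinery already developed, but must be stated precisely.
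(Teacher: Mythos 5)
There is a genuine gap, and it lies exactly where the paper says the new phenomenon occurs. Your plan is to reduce ergodicity to showing that the set of representations sending some simple closed curve to an elliptic or parabolic element has full measure in each non-extremal component. For the components $\mathcal M_{\pm1}^{s_i}(\Sigma_{0,4})$ (sign data with exactly one puncture of opposite sign) this is false: by the proof of Theorem \ref{main1} together with Proposition \ref{dense} and Lemma \ref{preserve}, the full-measure subset $\Omega_{\pm1}(\Sigma_{0,4})\cap\coprod_i\mathcal M_{\pm1}^{s_i}(\Sigma_{0,4})$ consists of representations sending \emph{every} non-peripheral simple closed curve to a hyperbolic element, so your ``bad set'' is co-null there, not null, and the March\'e--Wolff criterion cannot produce ergodicity on these components. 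Your fallback analytic argument also fails on its own terms: non-constancy of the real-analytic trace function $f_\gamma$ only forces level sets such as $\{f_\gamma=\pm2\}$ to be null, not the superlevel set $\{|f_\gamma|\geqslant2\}$; and in these components there is in fact no representation with $|\mathrm{tr}\,\rho([\gamma])|<2$ for $\gamma$ simple (every simple closed curve is hyperbolic or parabolic there, by continuity from the dense set above), so the witness you propose to ``build by hand'' does not exist. Two further problems: you misclassify the components --- the extremal relative Euler classes for $\Sigma_{0,4}$ are $\pm2$ (Proposition \ref{MF}), so \emph{all} ten components of $\mathcal M_{\pm1}(\Sigma_{0,4})$, including $\mathcal M_1^{s_+}$ and $\mathcal M_{-1}^{s_-}$, are non-extremal and must be treated, whereas you discard $s_\pm$ as ``Teichm\"uller-type''; and the March\'e--Wolff theorem is cited in the paper only for closed surfaces $\Sigma_g$, so even your relative Euler class $0$ case rests on an unproved transfer of their result to punctured surfaces.

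For comparison, the paper does not invoke March\'e--Wolff at all. For the $e(\rho)=0$ components and for $\mathcal M_{\pm1}^{s_\pm}$ it constructs explicit $Mod(\Sigma_{0,4})$-invariant one-parameter families of curves in the $(\lambda(x),\lambda(y),\lambda(z))$-coordinates (ellipses $E_k$, $E_{X,k}$, $E_{Y,k}$ and quartics $Q_k$) on which the relevant Dehn twists act as irrational rotations for almost every parameter, and combines two transverse such foliations (together with the Trace Reduction Algorithm in the $e=0$ case) to conclude that invariant functions are a.e.\ constant. For the components $\mathcal M_{\pm1}^{s_i}$ --- precisely where Bowditch's question has a negative answer --- it instead builds a two-fold cover $\psi(s,t)=[\sinh|s|,\sinh|t|,\sinh|s+t|]$ that is equivariant for the linear $\Gamma(2)$-action on $\mathbb R^2$ and the $Mod(\Sigma_{0,4})$-action, and quotes Moore's ergodicity theorem. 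Any correct proof for the $s_i$ components must use an argument of this dynamical kind rather than a full-measure-of-elliptics reduction, so the proposal needs to be reworked substantially for those components (and extended to cover $s_\pm$).
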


By March\'e-Wolff\,\cite{MW}, it is not surprising that the $Mod(\Sigma_{0,4})$-action is ergodic on the connected components of $\mathcal M(\Sigma_{0,4})$ where Bowditch's question has an affirmative answer. A new and unexpected phenomenon Theorem \ref{main4} reveals here is that, for punctured surfaces, the action of the mapping class group can still be ergodic when the answer to Bowditch's question is negative. Evidenced by Theorem \ref{main4}, we make the following

\begin{conjecture} The $Mod(\Sigma_{g,n})$-action is ergodic on each non-extremal connected component of $\mathcal M (\Sigma_{g,n}).$
\end{conjecture}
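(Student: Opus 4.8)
The plan is to attack this conjecture by combining the two mechanisms that are already known to produce ergodicity in the cases settled so far: the March\'e--Wolff reduction (which handles the part of each component where Bowditch's question has an affirmative answer) and the explicit lengths-coordinate dynamics developed in this paper for $\Sigma_{0,4}$. First I would establish a punctured-surface analogue of the March\'e--Wolff theorem: for each $k$ and each sign vector $s\in\{\pm1\}^n$, the $Mod(\Sigma_{g,n})$-invariant subset $\mathcal B_k^s\subset\mathcal M_k^s(\Sigma_{g,n})$ consisting of type-preserving representations that send \emph{some} non-peripheral simple closed curve to an elliptic or parabolic element carries an ergodic $Mod(\Sigma_{g,n})$-action. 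Granting such a reduction, ergodicity on a whole non-extremal component follows as soon as one controls the complementary \emph{Bowditch locus}, on which every non-peripheral simple closed curve is hyperbolic.

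The main technical engine would be the Kashaev--Penner lengths coordinates, which render the $Mod(\Sigma_{g,n})$-action concrete. Every mapping class is realized as a sequence of diagonal switches (Whitehead moves) on ideal triangulations; by Proposition \ref{change} these act on the $\lambda$-lengths through Ptolemy-type relations, together with the associated sign-change rule. Formula (\ref{trace}) then converts the hyperbolicity of a given simple closed curve into explicit (anti-)triangular inequalities among the $\lambda$-lengths, exactly as in the $\Sigma_{0,4}$ argument. In this language the conjecture becomes a statement about the dynamics of the group generated by these rational mutation maps acting on the Goldman-symplectic measure, and I would try to prove ergodicity by exhibiting a recurrence or mixing property of the mutation dynamics, exploiting that each switch is a measure-preserving piecewise-rational automorphism of an absolutely continuous measure class.

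The delicate point, which is precisely the phenomenon highlighted by Theorem \ref{main1}, is that the Bowditch locus need \emph{not} be null: for $\Sigma_{0,4}$ with relative Euler class $\pm1$ it has positive measure, and one expects the same for general $(g,n)$ in the non-extremal range. On this locus the March\'e--Wolff reduction gives nothing, and ergodicity has to be proved directly. The route that succeeds for $\Sigma_{0,4}$ is to recognize that the locus is cut out by anti-triangular inequalities which are \emph{preserved} under simultaneous diagonal switches, and then to analyze the induced dynamics inside that invariant parameter region. I would try to generalize this by isolating, for each surface, an invariant family of ``distinguished'' simple closed curves whose traces coordinatize the relevant region, and by showing that the mutations act there with enough hyperbolicity to force ergodicity.

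The hard part will be controlling the \emph{global} dynamics of the full mutation group for surfaces beyond $\Sigma_{0,4}$. As $g$ and $n$ grow, the number of edges in an ideal triangulation and the dimension of the sign data increase, the graph of triangulations branches, and there is no single elementary family of distinguished curves to track; a single Dehn twist is far from ergodic, so one must control the whole group generated by the switches. I expect this to require either an abstract ergodicity criterion—e.g. a Hopf-type argument, or a proof that no nonconstant measurable invariant function exists via a boundary or amenability input—or a structural identification of the non-extremal components with homogeneous systems on which ergodicity is classical. Matching the explicit $\Sigma_{0,4}$ computation to such a general framework, and in particular handling the positive-measure non-Bowditch locus uniformly across all components, is the central obstacle and the reason this remains a conjecture rather than a theorem.
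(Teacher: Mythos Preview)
This statement is a \emph{conjecture} in the paper, not a theorem: the paper offers no proof of it. The author states it as an open problem motivated by Theorem~\ref{main4} (the $\Sigma_{0,4}$ case). So there is no ``paper's own proof'' to compare your proposal against.

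Your write-up is not a proof but a research plan, and you say as much in your final sentence. The outline is reasonable as a strategy sketch---extending the March\'e--Wolff reduction to punctured surfaces, handling the Bowditch locus separately via lengths-coordinate dynamics---but none of the substantive claims you would need are established. In particular: (i) a punctured-surface March\'e--Wolff theorem is not known and would itself be a significant result; (ii) there is no general mechanism identified for proving ergodicity on the positive-measure Bowditch locus beyond $\Sigma_{0,4}$, where the paper's argument relies on very special features (three distinguished curves, the Farey tree, and in the $s_i$-components an explicit conjugacy to a linear $\Gamma(2)$-action on $\mathbb R^2$); and (iii) the vague appeals to ``Hopf-type arguments'' or ``homogeneous identifications'' are placeholders, not steps. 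So the honest assessment is that you have correctly diagnosed why the conjecture is open, but have not supplied any new ingredient that would close the gap.
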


The paper is organized as follows. In Section \ref{Sec:2}, we recall Kashaev's decorated character spaces and the lengths coordinates, in Section \ref{Sec:3}, we obtain a formula of the traces of closed curves in the lengths coordinates, and in Section \ref{Sec:4}, we introduce tetrahedral triangulations, distinguished simple closed curves and simultaneous diagonal switches. Then we prove Theorems \ref{main1} and \ref{main2}, Theorem \ref{main3} and Theorem\ref{main4} respectively in Sections \ref{Sec:5}, Section \ref{Sec:6} and Section \ref{Sec:7}. It is pointed out by the anonymous referee that the results concerning representations of relative Euler class $\pm1$ can be deduced more directly from the results of Goldman in \cite{G5}, where  we present his argument in Appendix \ref{B} for the interested readers.
\\

\textbf{Acknowledgments:} The author is grateful to the referee for bringing his attention to the relationship of this work with a previous work of Goldman, and for several valuable suggestions to improve the writing of this paper. The author also would like to thank Steven Kerckhoff, Feng Luo and Maryam Mirzakhani for helpful discussions and suggestions, Ser Peow Tan, Maxime Wolff, Sara Moloni, Fr\'ed\'eric Palesi and Zhe Sun for discussion and showing interest, Julien Roger for bringing his attention to Kashaev's work on the decorated character variety, Ronggang Shi for answering his questions on ergodic theory and Yang Zhou for writing a Python program for testing some of the author's ideas.
\medskip

The author is supported by NSF grant DMS-1405066.

\section{Decorated character spaces}\label{Sec:2}

We recall the decorated character spaces and the lengths coordinates in this section. The readers are recommended to read Kashaev's papers \cite{Ka1,Ka2} for the original approach and for more details.

Let $\Sigma_{g,n}$ be a punctured surface of genus $g$ with $n$ punctures, and let $\rho:\pi_1(\Sigma_{g,n})\rightarrow PSL(2,\mathbb R)$ be a non-elementary type-preserving representation.  A \emph{pseudo-developing map} $D_{\rho}$ of $\rho$ is a piecewise smooth $\rho$-equivariant map from the universal cover of $\Sigma_{g,n}$ to the hyperbolic plane $\mathbb H^2.$ By \cite{G1}, $\rho$ is the holonomy representation of $D_{\rho}.$  Let $\omega$ be the hyperbolic area form of $\mathbb H^2.$ Since $D_{\rho}$ is $\rho$-equivariant, the pull-back $2$-form $(D_{\rho})^*\omega$ descends to $\Sigma_{g,n}.$ The relative Euler class $e(\rho)$ of $\rho$ could be calculated by
$$e(\rho)=\frac{1}{2\pi}\int_{\Sigma_{g,n}}(D_{\rho})^*\omega.$$

An ideal arc $\alpha$ on $\Sigma_{g,n}$ is an arc connecting two (possibly the same) punctures. The image $D_{\rho}(\tilde\alpha)$ of a lift $\tilde\alpha$ of $\alpha$ is an arc in $\mathbb H^2$ connecting two (possibly the same) points on $\partial \mathbb H^2,$ each of which is the fixed point of the $\rho$-image of certain peripheral element of $\pi_1(\Sigma_{g,n}).$ We call $\alpha$  \emph{$\rho$-admissible} if the two end points of $D_{\rho}(\tilde\alpha)$ are distinct.  It is easy to see that $\alpha$ being $\rho$-admissible is independent of the choice of the lift $\tilde\alpha$ and the pseudo-developing map $D_{\rho}.$ An ideal triangulation $\mathcal T$ of $\Sigma_{g,n}$ consists of a set of disjoint ideal arcs, called the edges, whose complement is a disjoint union of triangles, called the ideal triangles. We call $\mathcal T$  \emph{$\rho$-admissible} if all the edges of $\mathcal T$ are $\rho$-admissible. If $\rho'$ is conjugate to $\rho,$ then it is easy to see that $\mathcal T$ is $\rho'$-admissible if and only if it is $\rho$-admissible. In \cite{Ka1}, Kashaev shows that

\begin{theorem}[Kashaev]\label{Kashaev1}
For each ideal triangulation $\mathcal T,$ the set $$\mathcal M_{\mathcal T}(\Sigma_{g,n})=\big\{[\rho]\in\mathcal M(\Sigma_{g,n})\ \big|\ \mathcal T\text{ is } \rho\text{-admissible}\big\}$$ is open and dense in $\mathcal M(\Sigma_{g,n}),$ and there exist finitely many ideal triangulations $\mathcal T_i,$ $i=1,\dots,m,$ such that
$$\mathcal M(\Sigma_{g,n})=\bigcup_{i=1}^m \mathcal M_{\mathcal T_i}(\Sigma_{g,n}).$$
\end{theorem}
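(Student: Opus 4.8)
The plan is to establish the three assertions in the statement — that each $\mathcal M_{\mathcal T}(\Sigma_{g,n})$ is open, that it is dense, and that finitely many of them cover $\mathcal M(\Sigma_{g,n})$ — by reducing the first two to statements about individual edges and isolating a single geometric input on non-elementary type-preserving representations, which will also drive the third. For the set-up: since $\pi_1(\Sigma_{g,n})$ is free, fix peripheral elements $c_1,\dots,c_n$; an edge $\alpha$ of $\mathcal T$ joining (possibly equal) punctures $i$ and $j$ determines, after a choice of lift $\tilde\alpha$, group elements $g_1,g_2$ such that the two endpoints of $D_\rho(\tilde\alpha)$ are precisely the fixed points of the parabolics $\rho(g_1c_ig_1^{-1})$ and $\rho(g_2c_jg_2^{-1})$, and because $\tilde\alpha$ joins two distinct lifts of punctures these two elements generate distinct maximal cyclic subgroups, so neither is a power of the other. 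For $\gamma\in\pi_1$ the assignment $\rho\mapsto\mathrm{Fix}(\rho(\gamma))\in\partial\mathbb H^2$ is real-analytic on the open locus where $\rho(\gamma)$ is parabolic and is $PSL(2,\mathbb R)$-equivariant, so ``$\alpha$ is $\rho$-admissible'', i.e. $\mathrm{Fix}(\rho(g_1c_ig_1^{-1}))\neq\mathrm{Fix}(\rho(g_2c_jg_2^{-1}))$, is an open conjugation-invariant condition; intersecting over the finitely many edges of $\mathcal T$ yields openness of $\mathcal M_{\mathcal T}(\Sigma_{g,n})$.

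For density, write $Z_\alpha\subseteq\mathcal M(\Sigma_{g,n})$ for the closed set of $[\rho]$ for which $\alpha$ is not $\rho$-admissible, so that $\mathcal M(\Sigma_{g,n})\setminus\mathcal M_{\mathcal T}(\Sigma_{g,n})=\bigcup_{\alpha\in\mathcal T}Z_\alpha$ is a finite union of closed sets; it then suffices to prove each $Z_\alpha$ has empty interior. Given $[\rho]\in Z_\alpha$ the two fixed points coincide at some $p\in\partial\mathbb H^2$, and I would show there are arbitrarily small deformations of $\rho$ through non-elementary type-preserving representations moving $\mathrm{Fix}(\rho(g_2c_jg_2^{-1}))$ off $p$: after conjugating so that $p=\infty$, non-elementarity gives $h\in\pi_1$ with $\rho(h)\cdot\infty\neq\infty$, and one deforms $\rho$ keeping $\rho(g_1c_ig_1^{-1})$ and every peripheral conjugacy class unchanged — so type-preservation, the relative Euler class and the signs of the punctures are all preserved — in a direction, available exactly because the image is not elementary, along which $\mathrm{Fix}(\rho(g_2c_jg_2^{-1}))$ leaves $\infty$. (Equivalently, $Z_\alpha$ lies in the proper real-analytic locus where $\mathrm{tr}\big[\rho(g_1c_ig_1^{-1}),\rho(g_2c_jg_2^{-1})\big]=2$, and one checks this locus is not a whole component.) A finite union of closed nowhere-dense sets is nowhere dense, so $\mathcal M_{\mathcal T}(\Sigma_{g,n})$ is dense.

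For the covering statement the key lemma is that every non-elementary type-preserving $\rho$ admits at least one $\rho$-admissible ideal triangulation. I would prove this greedily: the deformation idea above already shows $\rho$-admissible ideal arcs exist, and one enlarges an admissible collection of disjoint arcs one arc at a time, the complementary pieces being ideal polygons whose developed vertices — for a non-elementary $\rho$ — cannot coincide so badly as to make every diagonal degenerate; the process terminates for complexity reasons. Granting the lemma, $\{\mathcal M_{\mathcal T}(\Sigma_{g,n})\}_{\mathcal T}$ is an open cover; to extract a finite subcover one notes that the greedy construction depends on $\rho$ only through finitely many discrete data and so returns only finitely many triangulations, or, equivalently, that starting from one $\mathcal T_1$ the closed nowhere-dense real-analytic set $\mathcal M(\Sigma_{g,n})\setminus\mathcal M_{\mathcal T_1}(\Sigma_{g,n})$ can be covered by the charts of finitely many flips of $\mathcal T_1$ taken in general position, each removing a stratum of strictly smaller dimension, the process stopping because $\mathcal M(\Sigma_{g,n})$ is finite-dimensional.

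I expect the main obstacle to be the single geometric input used throughout: that, remaining inside the non-elementary type-preserving representations, one can perturb $\rho$ so as to separate any prescribed pair of peripheral fixed points, together with the closely related fact that every such $\rho$ carries an admissible ideal triangulation. Once these are in hand, the openness bookkeeping, the Baire-category step for density, and the dimension count for the finite subcover are routine.
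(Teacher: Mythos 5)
Note first that the paper does not prove this statement: it is quoted as Kashaev's theorem with reference to \cite{Ka1}, so your proposal has to be judged as a self-contained proof, and as such it has genuine gaps exactly at the points you yourself flag as the ``main obstacle'' --- which are the heart of the theorem, not routine supplements. The openness argument is fine. For density, however, the existence of the separating deformation is only asserted. Your parenthetical reduction to analyticity begs the question: $Z_\alpha$ does lie in the locus $\mathrm{tr}\big[\rho(g_1c_ig_1^{-1}),\rho(g_2c_jg_2^{-1})\big]=2$, but showing this locus is not an entire component is precisely the issue, and would require producing, in \emph{every} connected component of the non-elementary type-preserving character space, a representation for which the given edge is admissible --- the components themselves are not understood a priori (determining them for $\Sigma_{0,4}$ is Theorem \ref{main3} of this paper). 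The pointwise perturbation is also not as available as you suggest: deforming one generator by conjugation destroys parabolicity of the remaining peripheral element determined by the relation, so ``keeping every peripheral conjugacy class unchanged'' while moving $\mathrm{Fix}(\rho(g_2c_jg_2^{-1}))$ is a nontrivial constrained deformation problem that you do not solve; and in the zero-dimensional case $\Sigma_{0,3}$ no deformation exists at all, so the argument must at minimum be supplemented by showing $Z_\alpha=\emptyset$ there.

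The covering statement is where the gap is most serious. The key lemma (every non-elementary type-preserving $\rho$ admits some $\rho$-admissible ideal triangulation) does not follow from ``the deformation idea above'': that idea perturbs $\rho$, whereas here $\rho$ is fixed and one must find arcs. Non-elementarity does give two lifts of punctures with distinct developed images, but those lifts need not be joined by an \emph{embedded} arc --- simple arcs realize only special homotopy classes --- so even the base case of your greedy construction is unproved. The inductive step can also get stuck: in a complementary ideal square the developed vertices can read $p,q,p,q$ in cyclic order, making both diagonals degenerate while all four sides are admissible (compare Proposition \ref{change}(2), where the flipped diagonal degenerates exactly when $\lambda(e_1)\lambda(e_3)=\lambda(e_2)\lambda(e_4)$), so the greedy process needs backtracking or a different idea. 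Finally, the extraction of a finite subcover is unsupported on either route you offer: you do not say what the ``finitely many discrete data'' are, and the dimension-descent claim --- that the bad locus of $\mathcal T_1$ is covered by finitely many flips, each flip cutting the remaining bad set down to strictly smaller dimension --- is asserted without any reason why the analytic bad sets of successive triangulations should intersect transversally or drop in dimension. These are exactly the points where Kashaev's actual argument in \cite{Ka1} does real work, and your proposal does not yet replace it.
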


Let $\Sigma_{g,n}$ and $\rho$ be as above. A \emph{decoration} of $\rho$ is an assignment of horocycles centered at the fixed points of the $\rho$-image of the peripheral elements of $\pi(\Sigma_{g,n}),$ one for each, which is invariant under the $\rho(\pi_1(\Sigma_{g,n}))$-action.  In the case that the fixed points of the $\rho$-image of two peripheral elements coincide, which may happen only when $\rho$ is non-Fuchsian, we do not require the corresponding assigned horocycles to be the same. If $d$ is a decoration of $\rho$ and $g$ is an element of $PSL(2,\mathbb R),$ then the $g$-image of the horocycles in $d$ form a decoration $g\cdot d$ of the conjugation $g\rho g^{-1}$ of $\rho.$ We call a pair $(\rho,d)$ a \emph{decorated representation}, and call two decorated representations $(\rho,d)$ and $(\rho', d')$ \emph{equivalent} if $\rho'=g\rho g^{-1}$ and $d'=g\cdot  d$ for some $g\in PSL(2,\mathbb R).$ The \emph{decorated character space} of $\Sigma_{g,n},$ denoted by $\mathcal M^d(\Sigma_{g,n}),$ is the space of equivalence classes of decorated representations. In \cite{Ka1}, Kashaev shows that the projection $\pi:\mathcal M^d(\Sigma_{g,n})\rightarrow\mathcal M(\Sigma_{g,n})$ defined by $\pi([(\rho,d)])=[\rho]$ is a principle $\mathbb R^V_{>0}$-bundle, where $V$ is the set of punctures of $\Sigma_{g,n},$ and the pre-image of the extremal components are isomorphic as principle $\mathbb R^V_{>0}$-bundles to Penner's decorated Teichm\"uller space\,\cite{P1}.

Fixing a $\rho$-admissible ideal triangulation $\mathcal T$ and a pseudo-developing map $D_{\rho},$ the lengths coordinate of $(\rho,d)$ consists of the following two parts: the $\lambda$-lengths of the edges and the signs of the ideal triangles. The $\lambda$-length of an edge $e$ of $\mathcal T$ is defined as follows. Since $e$ is $\rho$-admissible, for any lift $\widetilde e$ of $e$ to the universal cover the image $D_{\rho}(\widetilde e)$ connects to distinct points $u_1$ and $u_2$ on $\partial\mathbb H^2.$ The decoration $d$ assigns two horocycles $H_1$ and $H_2$  respectively centered at $u_1$ and $u_2.$ Let $l(e)$ be the signed hyperbolic distance between the two horocycles, i.e., $l(e)>0$ if $H_1$ and $H_2$ are disjoint and $l(e)\leqslant 0$ if otherwise. Then the \emph{$\lambda$-length} of $e$ in the decorated representation $(\rho, d)$ is defined by
$$\lambda(e)=\exp\frac{l(e)}{2}.$$
The sign of an ideal triangle $t$ of $\mathcal T$ is defined as follows. Let $v_1,$ $v_2$ and $v_3$ be the vertices of $t$ so that the orientation on $t$ determined by the cyclic order $v_ 1\mapsto v_2\mapsto v_3\mapsto v_1$ coincides with the one induced from the orientation of $\Sigma_{g,n}.$ Let $\tilde t$ be a lift of $t$ to the universal cover, and let $\widetilde{v_1},$ $\widetilde{v_2}$ and $\widetilde{v_3}$ be the vertices of $\tilde t$ so that $\widetilde{v_i}$ is a lift of $v_i,$ $i=1,2,3.$ Since $\mathcal T$ is $\rho$-admissible, the points $D_{\rho}(\widetilde{v_1}),$ $D_{\rho}(\widetilde{v_2})$ and $D_{\rho}(\widetilde{v_3})$ are distinct on $\partial \mathbb H^2,$ and hence determine a hyperbolic ideal triangle $\Delta$ in $\mathbb H^2$ with them as the ideal vertices. The \emph{sign} of $t$ is positive, denoted by $\epsilon(t)=1,$ if the orientation on $\Delta$ determined by the cyclic oder $D_{\rho}(\widetilde{v_1})\mapsto D_{\rho}(\widetilde{v_2})\mapsto D_{\rho}(\widetilde{v_3})\mapsto D_{\rho}(\widetilde{v_1})$ coincides with the one induced from the orientation of $\mathbb H^2.$ Otherwise, the sign of $t$ is negative, and is denoted by $\epsilon(t)=-1.$ From the construction, it is easy to see that the $\lambda$-lengths $\lambda(e)$ and the signs $\epsilon(t)$ depend only on the equivalence class of $(\rho,d).$ Let $T$ be the set of ideal triangles of $\mathcal T.$ Then the integral of the pull-back form $(D_{\rho})^*\omega$ over $\Sigma_{g,n}$ equals $\sum_{t\in T}\epsilon(t)\pi,$ and the relative Euler class of $\rho$ can be calculated by
\begin{equation}\label{euler}
e(\rho)=\frac{1}{2}\sum_{t\in T}\epsilon(t).
\end{equation}

Let $V$ be the set of punctures of $\Sigma_{g,n}$ and let $E$ be the set of edges of $\mathcal T.$ Then there is a principle $\mathbb R_{>0}^V$-bundle structure on $\mathbb R_{>0}^E$ defined as follows. For $\mu\in\mathbb R_{>0}^V$ and $\lambda\in\mathbb R_{>0}^E,$ we define $\mu\cdot\lambda\in \mathbb R_{>0}^E$ by $(\mu\cdot\lambda)(e)=\mu(v_1)\lambda(e)\mu(v_2),$ where $v_1$ and $v_2$ are the punctures connected by the edge $e.$

\begin{theorem}[Kashaev]\label{Kashaev2}
Let $\pi:\mathcal M^d(\Sigma_{g,n})\rightarrow\mathcal M(\Sigma_{g,n})$ be the principle $\mathbb R_{>0}^V$-bundle, and let $\mathcal M_{\mathcal T}^d(\Sigma_{g,n})$ the pre-image of $\mathcal M_{\mathcal T}(\Sigma_{g,n}).$
Then $$\mathcal M_{\mathcal T}^d(\Sigma_{g,n})=\coprod_{\epsilon\in\{\pm1\}^T}\mathcal R(\mathcal T,\epsilon),$$
where each $\mathcal R(\mathcal T,\epsilon)$ is isomorphic as principle $\mathbb R_{>0}^V$-bundles to an open subset of $\mathbb R_{>0}^E.$ The isomorphism is given by the $\lambda$-lengths, and the image of $\mathcal R(\mathcal T,\epsilon)$ is the complement of the zeros of certain rational function coming from the image of the peripheral elements not being the identity matrix.
\end{theorem}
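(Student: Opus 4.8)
The plan is to split the statement into (i) the decomposition of $\mathcal{M}_{\mathcal{T}}^d(\Sigma_{g,n})$ into the open-and-closed pieces $\mathcal{R}(\mathcal{T},\epsilon)$, and (ii) the identification of each such piece, $\mathbb{R}_{>0}^V$-equivariantly, with an open subset of $\mathbb{R}_{>0}^E$ via the $\lambda$-lengths. For (i), note that on $\mathcal{M}_{\mathcal{T}}^d(\Sigma_{g,n})$ the assignment $[(\rho,d)]\mapsto(\epsilon(t))_{t\in T}\in\{\pm1\}^T$ is well defined, since (as recalled just above the theorem) once $\mathcal{T}$ is $\rho$-admissible the sign of each ideal triangle depends only on the equivalence class. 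This assignment is locally constant: along a continuous path $(\rho_s,d_s)$ in $\mathcal{M}_{\mathcal{T}}^d(\Sigma_{g,n})$ one can choose pseudo-developing maps $D_{\rho_s}$ depending continuously on $s$ (the fixed points of the parabolic peripheral holonomies vary continuously, and a $\rho_s$-equivariant map can be built from them cell by cell), so that the three developed ideal vertices of each lifted ideal triangle move continuously on $\partial\mathbb{H}^2$ and stay pairwise distinct by admissibility; hence their cyclic order --- the sign --- cannot change. Therefore $\mathcal{R}(\mathcal{T},\epsilon)$, defined as the locus where the sign vector equals $\epsilon$, is open and closed, and since $\epsilon$ ranges over all of $\{\pm1\}^T$ we obtain the asserted disjoint union, with some pieces empty --- e.g. by (\ref{euler}) any $\epsilon$ with $\sum_t\epsilon(t)$ odd.

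For (ii), I would first observe that the $\lambda$-length map $\Lambda_\epsilon\colon\mathcal{R}(\mathcal{T},\epsilon)\to\mathbb{R}_{>0}^E$, $[(\rho,d)]\mapsto(\lambda(e))_{e\in E}$, is $\mathbb{R}_{>0}^V$-equivariant: rescaling the horocycle at a puncture $v$ by $\mu(v)>0$ adds $2\log\mu(v_1)+2\log\mu(v_2)$ to the signed distance $l(e)$ of an edge $e$ with endpoints $v_1,v_2$, hence multiplies $\lambda(e)=\exp(l(e)/2)$ by $\mu(v_1)\mu(v_2)$, which is exactly the bundle action $(\mu\cdot\lambda)(e)=\mu(v_1)\lambda(e)\mu(v_2)$ introduced before the theorem. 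Injectivity of $\Lambda_\epsilon$ comes from the fact that a decorated representation is determined, up to the $PSL(2,\mathbb{R})$-action, by a single pseudo-developing map up to post-composition with an isometry: develop a chosen lift of one ideal triangle $t_0$ --- by Penner's lemma \cite{P1} a decorated ideal triangle in $\mathbb{H}^2$ is determined up to orientation-preserving isometry by the three $\lambda$-lengths of its sides, so together with the orientation $\epsilon(t_0)$ this fixes it up to a global isometry --- and then develop the neighbouring lifted triangles one by one across the edges of $\mathcal{T}$, each new decorated ideal triangle being again pinned down by the $\lambda$-lengths of its three sides (one shared with an already-developed triangle) together with its sign $\epsilon(t')$. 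Continuing $\pi_1(\Sigma_{g,n})$-equivariantly over the simply connected universal cover recovers the whole developing map, hence $[(\rho,d)]$, from $(\lambda(e))_{e\in E}$ and the fixed $\epsilon$; so $\Lambda_\epsilon$ is injective.

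It remains to describe the image of $\Lambda_\epsilon$. Running the previous construction in reverse, given $\lambda\in\mathbb{R}_{>0}^E$ one develops, triangle by triangle over the universal cover, the decorated ideal triangles with $\lambda$-lengths prescribed by $\lambda$ and signs prescribed by $\epsilon$; simple connectivity means there is no monodromy obstruction, so one obtains a $\pi_1(\Sigma_{g,n})$-equivariant piecewise-smooth map $D$, a holonomy homomorphism $\rho_\lambda\colon\pi_1(\Sigma_{g,n})\to PSL(2,\mathbb{R})$, and a decoration $d_\lambda$. Around a puncture $v$, the peripheral holonomy $\rho_\lambda(\gamma_v)$ fixes the developed ideal point over $v$ and is the product of the horocyclic gluings of the triangles in the fan at $v$, hence is parabolic with translation a Laurent-polynomial rational function $P_v(\lambda)$ of the $\lambda$-lengths --- Penner's $h$-length at $v$, now of unrestricted sign because the triangle signs may be mixed. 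Thus $\rho_\lambda$ is type-preserving exactly when $\rho_\lambda(\gamma_v)\neq\mathrm{id}$ for every $v$, i.e. when $\lambda$ avoids the zero loci of the $P_v$, an open $\mathbb{R}_{>0}^V$-invariant subset $U_\epsilon\subseteq\mathbb{R}_{>0}^E$; and on $U_\epsilon$ the representation $\rho_\lambda$ is automatically non-elementary, since a type-preserving elementary representation would have a common fixed point on $\partial\mathbb{H}^2$, forcing all developed ideal vertices to coincide and contradicting $\mathcal{T}$-admissibility. Finally one checks that $\lambda\mapsto[(\rho_\lambda,d_\lambda)]$ is inverse to $\Lambda_\epsilon$, so $\Lambda_\epsilon$ is an isomorphism of principal $\mathbb{R}_{>0}^V$-bundles onto $U_\epsilon$, as claimed.

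I expect the main obstacle to be the bookkeeping in this reconstruction: checking that the triangle-by-triangle development is genuinely $\rho_\lambda$-equivariant and that $\rho_\lambda$ is a well-defined homomorphism (equivalently, that the elementary gluing matrices around each ideal vertex of the lifted triangulation multiply coherently to a parabolic fixing that vertex), and making the functions $P_v$ explicit enough to see that $U_\epsilon$ is exactly the complement of their zeros. By comparison the clopen decomposition, the equivariance, and the injectivity are routine once Penner's decorated-triangle lemma and the continuity of developed ideal vertices are in hand.
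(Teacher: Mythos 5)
This theorem is stated in the paper as Kashaev's result and is not proved there---it is quoted from \cite{Ka1} (see also \cite{Ka2})---so there is no in-paper argument to compare with; judged on its own terms, your outline is essentially the standard Penner--Kashaev reconstruction and is sound, and it matches how the paper uses the result elsewhere (Lemma \ref{3.1}, Proposition \ref{rational}). The decomposition over $\epsilon\in\{\pm1\}^T$ is immediate once the signs are known to depend only on the class $[(\rho,d)]$ (your local-constancy discussion is more than the displayed equality needs); the equivariance computation is right; injectivity and surjectivity via developing decorated ideal triangles over the universal cover are correct, the deferred points being genuinely routine: the dual graph of the lifted triangulation is a tree, so the development exists and is unique, and uniqueness is what gives both the equivariance needed to define the holonomy homomorphism and the injectivity; and the image is exactly where the signed sums of $h$-lengths at the punctures are nonzero, i.e.\ the complement of the zero set of $\psi_\epsilon=\prod_v\psi_{v,\epsilon}$ as in Proposition \ref{rational}, with $\rho_\lambda$-admissibility of $\mathcal T$ automatic because every developed triangle is nondegenerate. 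Three small repairs. First, your example of empty pieces is vacuous: $|T|=2(2g-2+n)$ is even, so $\sum_{t\in T}\epsilon(t)$ is never odd; empty pieces do occur, but because $\psi_\epsilon$ can vanish identically (e.g.\ for $\Sigma_{0,3}$ or $\Sigma_{1,1}$ with mixed signs, where the $h$-lengths cancel in pairs), which is still consistent with the statement. Second, when ruling out elementary $\rho_\lambda$ you should add the one-line reason that a group containing a parabolic element can be elementary only by fixing that parabolic's fixed point on $\partial\mathbb H^2$ (an invariant interior point or an invariant geodesic is incompatible with a parabolic), after which your admissibility contradiction applies. Third, ``isomorphic as principal $\mathbb R_{>0}^V$-bundles'' also requires continuity of the $\lambda$-length map and of its inverse; this is routine (the fixed points of the parabolic peripheral images and the horocycles vary continuously with $(\rho,d)$, and the reconstruction is given by explicit formulas), but it should be recorded.
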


On $\mathcal M(\Sigma_{g,n})$ there is the Goldman symplectic form $\omega_{WP}$ which restricts to the Weil-Petersson symplectic form on the Teichm\"uller component\,\cite{G4}.
By \cite{Ka1,Ka2}, for each ideal triangulation $\mathcal T,$ the pull-back of $\omega_{WP}$ to $\mathcal M_{\mathcal T}^d(\Sigma_{g,n})$ is expressed in the $\lambda$-lengths by
\begin{equation}\label{WP}\pi^*\omega_{WP}=\sum_{t\in T}\Big(\frac{d\lambda(e_1)\wedge d\lambda(e_2)}{\lambda(e_1)\lambda(e_2)}+\frac{d\lambda(e_2)\wedge d\lambda(e_3)}{\lambda(e_2)\lambda(e_3)}+\frac{d\lambda(e_3)\wedge d\lambda(e_1)}{\lambda(e_3)\lambda(e_1)}\Big),
\end{equation}
where $e_1,$ $e_2$ and $e_3$ are the edges of the ideal triangle $t$ in the cyclic order induced from the orientation of $\Sigma_{g,n}.$ This formula is first obtained by Penner\,\cite{P2} for the decorated Teichm\"uller space. From (\ref{WP}), it is easy to see that the measure on each $\mathcal R(\mathcal T,\epsilon)$ induced by $\pi^*\omega_{WP}$ is in the measure class of the pull-back of the Lebesgue measure of $\mathbb R_{>0}^E.$

A diagonal switch at an edge $e$ of $\mathcal T$ replaces the edge $e$ by the other diagonal of the quadrilateral formed by the union of the two ideal triangles adjacent to $e.$ By \cite{Ha}, any ideal triangulation can be obtained from another by doing a finite sequence of diagonal switches. Let $(\rho,d)$ be a decorated representation and let $\mathcal T$ be a $\rho$-admissible ideal triangulation of $\Sigma_{g,n}.$ If $\mathcal T'$ is the ideal triangulation of $\Sigma_{g,n}$ obtained from $\mathcal T$ by doing a diagonal switch at an edge $e,$ then the $\rho$-admissibility of $\mathcal T'$ and the lengths coordinate of $(\rho,d)$ in $\mathcal T'$ are determined as follows. Let $t_1$ and $t_2$ be the two ideal triangles of $\mathcal T$ adjacent to $e,$ let $e'$ be the new edge of $\mathcal T'$ and let $t_1'$ and $t_2'$ be the two ideal triangles in $\mathcal T'$ adjacent to $e'.$ We respectively name the edges of the quadrilateral $e_1,\dots, e_4$ in the way that  $e_1$ is adjacent to $t_1$ and $t_1',$ $e_2$ is adjacent to $t_1$ and $t_2',$ $e_3$ is adjacent to  $t_2$ and $t_2'$ and $e_4$ is adjacent to $t_2$ and $t_1'.$ Then $e_1$ is opposite to $e_3,$ and $e_2$ is opposite to $e_4$ in the quadrilateral.

\begin{proposition}[Kashaev]\label{change} \begin{enumerate}[(1)]

\item If the signs $\epsilon(t_1)=\epsilon(t_2),$ then $\mathcal T'$ is $\rho$-admissible. In this case, $$\epsilon(t_1')=\epsilon(t_2')=\epsilon(t_1) \quad\text{and}\quad\lambda(e')=\frac{\lambda(e_1)\lambda(e_3)+\lambda(e_2)\lambda(e_4)}{\lambda(e)},$$
and the signs of the common ideal triangles and the $\lambda$-lengths of the common edges of $\mathcal T$ and $\mathcal T'$ do not change.

\item If $\epsilon(t_1)\neq\epsilon(t_2),$ then $\mathcal T'$ is $\rho$-admissible if and only if $\lambda(e_1)\lambda(e_3)\neq\lambda(e_2)\lambda(e_4).$ In this case,

\begin{enumerate}[(2.1)]
\item if $\lambda(e_1)\lambda(e_3)<\lambda(e_2)\lambda(e_4),$ then $$\epsilon(t_1')=\epsilon(t_1),\ \  \ \epsilon(t_2')=\epsilon(t_2)\quad\text{and}\quad\lambda(e')=\frac{\lambda(e_2)\lambda(e_4)-\lambda(e_1)\lambda(e_3)}{\lambda(e)},$$
\item if $\lambda(e_2)\lambda(e_4)<\lambda(e_1)\lambda(e_3),$ then $$\epsilon(t_1')=\epsilon(t_2),\ \ \ \epsilon(t_2')=\epsilon(t_1)\quad\text{and}\quad\lambda(e')=\frac{\lambda(e_1)\lambda(e_3)-\lambda(e_2)\lambda(e_4)}{\lambda(e)},$$
\end{enumerate}
and the signs of the common ideal triangles and the $\lambda$-lengths of the common edges of $\mathcal T$ and $\mathcal T'$ do not change, 

\end{enumerate}
\end{proposition}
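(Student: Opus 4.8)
The proof of Proposition \ref{change} is essentially a computation carried out lift-by-lift in $\mathbb H^2$, so the plan is to reduce everything to a single ideal quadrilateral and its two diagonalizations. First I would fix a pseudo-developing map $D_\rho$ and a lift $\widetilde Q$ of the quadrilateral $t_1\cup t_2$, so that $D_\rho$ sends the four vertices to four points $z_1,z_2,z_3,z_4$ on $\partial\mathbb H^2$, cyclically ordered consistently with the orientation of $\Sigma_{g,n}$, and the decoration assigns a horocycle at each $z_i$. The $\lambda$-lengths $\lambda(e_1),\dots,\lambda(e_4)$ of the four sides and $\lambda(e)$ of the diagonal are then functions of these five pieces of data, and the new diagonal $e'$ has its two endpoints among the $z_i$ as well. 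The key algebraic input is Penner's Ptolemy relation: for four points in \emph{convex position} (all distinct, cyclically ordered) one has $\lambda(e)\lambda(e')=\lambda(e_1)\lambda(e_3)+\lambda(e_2)\lambda(e_4)$, and this is exactly case (1). So for case (1) I would just check that $\epsilon(t_1)=\epsilon(t_2)$ forces $z_1,z_2,z_3,z_4$ to be in convex position on the circle (the two triangles lie on opposite sides of the diagonal and inherit the same orientation), invoke the Ptolemy identity, and read off that both new triangles are again positively/negatively oriented in the same way, hence $\mathcal T'$ is $\rho$-admissible with the stated signs.

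For case (2), when $\epsilon(t_1)\ne\epsilon(t_2)$, the four points are \emph{not} in convex position: the two triangles overlap, i.e. one of $z_2,z_4$ lies inside the arc cut off by the chord through the other three, and the configuration is the "folded" one. Here the relevant identity is the signed Ptolemy relation, which replaces the sum by a difference $|\lambda(e_1)\lambda(e_3)-\lambda(e_2)\lambda(e_4)|$; I would derive this either from a direct $SL(2,\mathbb R)$ matrix computation using horocyclic coordinates (writing each $z_i$ and its horocycle as a vector in $\mathbb R^2$ up to sign, so that $\lambda$ of an edge is the absolute value of the $2\times 2$ determinant of the two corresponding vectors, and then expanding a $3$-term determinant identity) or by citing the computation in Kashaev \cite{Ka1}. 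The new diagonal $e'$ is $\rho$-admissible precisely when its two endpoints are distinct after developing, which fails exactly when the determinant identity forces the would-be $\lambda(e')$ to vanish, i.e. when $\lambda(e_1)\lambda(e_3)=\lambda(e_2)\lambda(e_4)$; this gives the admissibility criterion. The two sub-cases (2.1) and (2.2) correspond to which of the two competing terms dominates, which in turn determines which of the two folded configurations one lands in and hence how the signs $\epsilon(t_1'),\epsilon(t_2')$ get assigned — in one case the orientations are inherited directly, in the other they get swapped.

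Throughout, I would keep careful track of which side of the diagonal each triangle lies on and of the induced cyclic orders, since the sign bookkeeping is where errors creep in; the statement that the common edges keep their $\lambda$-lengths and the common triangles keep their signs is immediate because the diagonal switch is supported inside $\widetilde Q$ and does not touch the rest of the developing data. The main obstacle, and the only genuinely nontrivial point, is establishing the \emph{signed} Ptolemy relation in case (2) together with the correct matching of the sign of $\lambda(e_1)\lambda(e_3)-\lambda(e_2)\lambda(e_4)$ to the combinatorial type of the new triangulation — everything else is either Penner's classical Ptolemy identity or a direct check of orientations. I would therefore organize the proof around a single lemma computing $\lambda(e)\lambda(e')$ as a signed sum over all configurations of four decorated ideal points, and then specialize.
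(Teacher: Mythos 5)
Your proposal is correct, and in case (1) it is exactly what the paper does (the paper attributes the proposition to Kashaev and only sketches the verification in the remark following the statement). The one genuine difference is how the difference formula in case (2) is obtained. The paper observes that when $\epsilon(t_1)\neq\epsilon(t_2)$ the developed picture is the folded one in which the geodesics determined by $e_2$ and $e_4$ (say) intersect, so that these two edges are the diagonals of the \emph{actual} convex ideal quadrilateral; applying the classical, unsigned Ptolemy relation to that quadrilateral gives $\lambda(e_2)\lambda(e_4)=\lambda(e_1)\lambda(e_3)+\lambda(e)\lambda(e'),$ hence (2.1), and the signs $\epsilon(t_1'),\epsilon(t_2')$ are read off the same picture (Figure \ref{Fig1}): the triangles adjacent to the pair of edges with the smaller product keep their orientations. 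You instead prove a signed Ptolemy relation once and for all from the $2\times 2$ Pl\"ucker identity for the null-vector (horocyclic) representatives, with $\lambda$-lengths as absolute values of determinants. Both routes are valid; your determinant identity has the advantage of treating the admissibility criterion uniformly ($e'$ fails to be admissible exactly when $\det(v_2,v_4)=0$, i.e. when $\lambda(e_1)\lambda(e_3)=\lambda(e_2)\lambda(e_4)$), which the paper leaves implicit in the attribution to Kashaev, while the paper's re-identification of the convex quadrilateral avoids any new signed lemma and makes the sign rule visible at a glance. The only step your sketch does not actually carry out is the orientation bookkeeping distinguishing (2.1) from (2.2); the concrete statement to verify in each folded configuration is precisely the paper's rule that the signs of the triangles adjacent to the shorter pair of opposite edges are unchanged.
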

The rule for the signs in (2.1) and (2.2) is that the signs of the ideal triangles adjacent to the shorter edges do not change. This could be seen as follows. If, for example, $\epsilon(t_1)=-1,$ $\epsilon(t_2)=1$ and $\lambda(e_1)\lambda(e_3)<\lambda(e_2)\lambda(e_4),$ then the hyperbolic ideal triangle $\Delta_1$ determined by $t_1$ is negatively oriented, the hyperbolic ideal triangle $\Delta_2$ determined by $t_2$ is positively oriented, and the geodesic arcs $a_2$ and $a_4$ determined by $e_2$ and $e_4$ intersect. See Figure \ref{Fig1}. As a consequence, the hyperbolic ideal triangle $\Delta_1'$ determined by $t_1'$ is negatively oriented and the hyperbolic ideal triangle $\Delta_2'$ determined by $t_2'$ is positively oriented, hence $\epsilon(t_1')=-1$ and $\epsilon(t_2')=1.$ The $\lambda$-lengths of $e'$ follows from Penner's Ptolemy relation\,\cite{P1} that $\lambda(e_2)\lambda(e_4)=\lambda(e_1)\lambda(e_3)+\lambda(e)\lambda(e').$ The other cases could be verified similarly. \begin{figure}[htbp]
\centering
\includegraphics[scale=0.35]{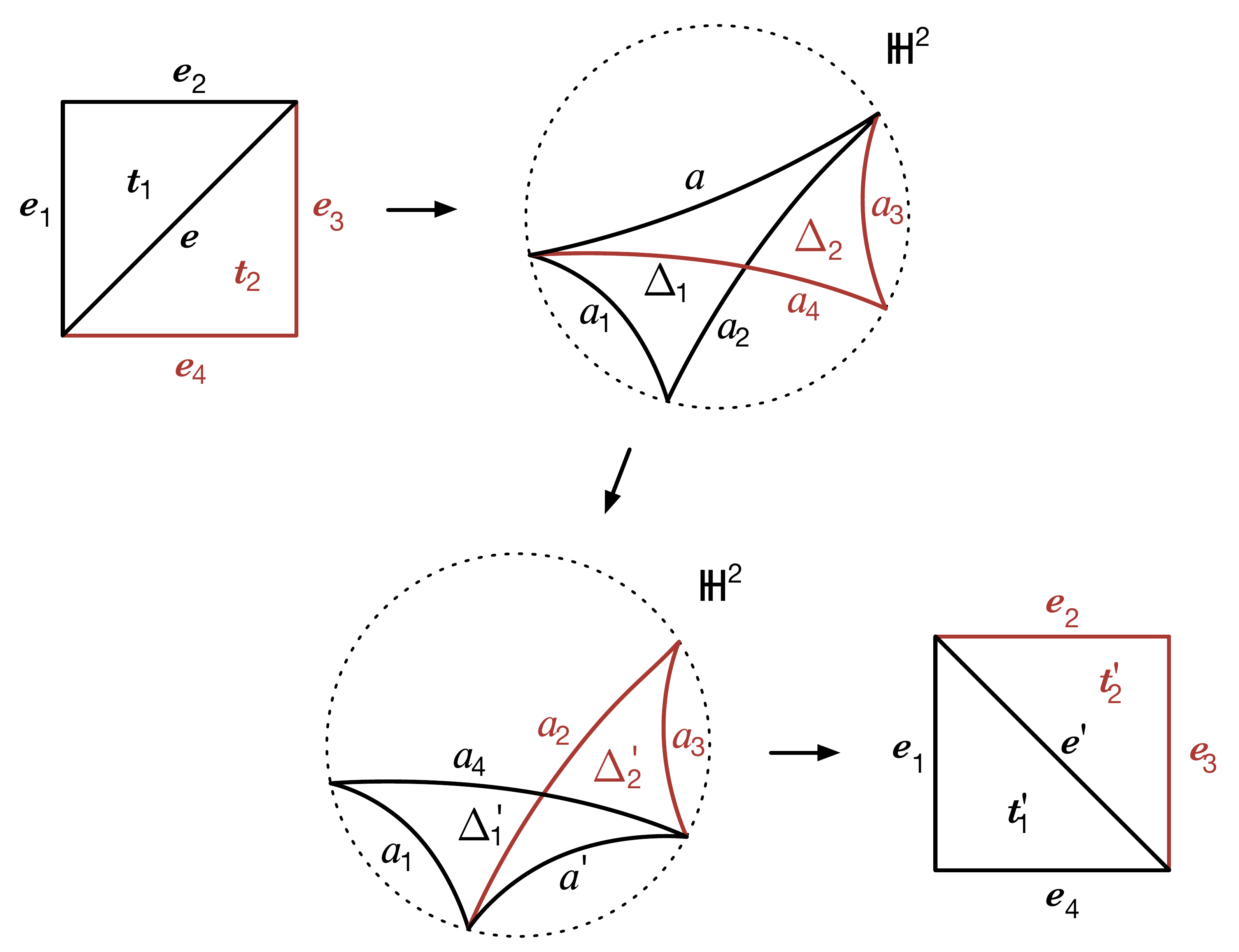}
\caption{}
\label{Fig1}
\end{figure}

By Theorems \ref{Kashaev1} and \ref{Kashaev2}, the family of open subsets $\{\mathcal R(\mathcal T,\epsilon)\}$ where $\mathcal T$ goes over all the ideal triangulations of $\Sigma_{g,n}$ together with the $\lambda$-lengths functions $\{\lambda:\mathcal R(\mathcal T,\epsilon)\rightarrow\mathbb R_{>0}^E\}$ form a coordinate system of $\mathcal M^d(\Sigma_{g,n}),$ and the transition functions are given by Proposition \ref{change}.

\begin{theorem}[Kashaev]\label{Kashaev3} For each relative Euler class $k,$
 let $\mathcal M_k^d(\Sigma_{g,n})$ be the pre-image of $\mathcal M_k(\Sigma_{g,n})$ under the projection $\pi:\mathcal M^d(\Sigma_{g,n})\rightarrow\mathcal M(\Sigma_{g,n}).$  Then
$$\mathcal M_k^d(\Sigma_{g,n})=\bigcup_{\mathcal T}\coprod_{\epsilon}\mathcal R(\mathcal T,\epsilon),$$
where the union is over all the ideal triangulations $\mathcal T$ and the disjoint union is over all $\epsilon\in\{\pm1\}^T$ such that $\sum_{t\in T}\epsilon(t)=2k.$ Moreover, $\mathcal M_k^d(\Sigma_{g,n})$'s are principle $\mathbb R_{>0}^V$-bundles, and are disjoint for different $k.$
\end{theorem}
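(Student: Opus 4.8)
The plan is to assemble the statement directly from Theorems \ref{Kashaev1} and \ref{Kashaev2} together with Formula (\ref{euler}); the only genuinely new observation needed is that the relative Euler class is constant on each coordinate patch $\mathcal R(\mathcal T,\epsilon)$, after which everything reduces to bookkeeping and a little point-set topology.

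First I would record an open cover of the whole decorated character space. By Theorem \ref{Kashaev1} there are finitely many ideal triangulations $\mathcal T_i$ with $\mathcal M(\Sigma_{g,n})=\bigcup_i\mathcal M_{\mathcal T_i}(\Sigma_{g,n})$, so pulling back along $\pi$ gives $\mathcal M^d(\Sigma_{g,n})=\bigcup_i\mathcal M_{\mathcal T_i}^d(\Sigma_{g,n})$; and by Theorem \ref{Kashaev2} each $\mathcal M_{\mathcal T}^d(\Sigma_{g,n})=\coprod_{\epsilon\in\{\pm1\}^T}\mathcal R(\mathcal T,\epsilon)$ with every $\mathcal R(\mathcal T,\epsilon)$ an open subset of $\mathbb R_{>0}^E$ in the $\lambda$-length coordinates. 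Hence the family $\{\mathcal R(\mathcal T,\epsilon)\}$, indexed by all ideal triangulations $\mathcal T$ and all $\epsilon\in\{\pm1\}^T$, is an open cover of $\mathcal M^d(\Sigma_{g,n})$.

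Next I would use Formula (\ref{euler}) to cut this cover up by relative Euler class. For $[(\rho,d)]\in\mathcal R(\mathcal T,\epsilon)$ the geometric signs of the ideal triangles of $\mathcal T$ are by construction exactly the components of $\epsilon$, so $e(\rho)=\frac{1}{2}\sum_{t\in T}\epsilon(t)$ is a single number on all of $\mathcal R(\mathcal T,\epsilon)$; thus $\mathcal R(\mathcal T,\epsilon)\subseteq\mathcal M_k^d(\Sigma_{g,n})$ precisely when $\sum_{t\in T}\epsilon(t)=2k$, and $\mathcal R(\mathcal T,\epsilon)$ is disjoint from $\mathcal M_k^d(\Sigma_{g,n})$ otherwise. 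Intersecting the cover of the previous step with $\mathcal M_k^d(\Sigma_{g,n})$ then gives
\[
\mathcal M_k^d(\Sigma_{g,n})=\bigcup_{\mathcal T}\ \coprod_{\epsilon:\ \sum_{t}\epsilon(t)=2k}\mathcal R(\mathcal T,\epsilon),
\]
the inner unions remaining disjoint by Theorem \ref{Kashaev2}, and the sets $\mathcal M_k^d(\Sigma_{g,n})$ being pairwise disjoint for distinct $k$ since each $[\rho]$ has a well-defined relative Euler class. For the bundle assertion I would note that $\mathcal M_k^d(\Sigma_{g,n})$, being a union of members of the open cover, is open, and its complement $\bigcup_{k'\neq k}\mathcal M_{k'}^d(\Sigma_{g,n})$ is open for the same reason, so $\mathcal M_k^d(\Sigma_{g,n})$ is open and closed; equivalently, the map $[\rho]\mapsto e(\rho)=\frac{1}{2\pi}\int_{\Sigma_{g,n}}(D_\rho)^*\omega$ is continuous and $\mathbb Z$-valued, hence locally constant, so $\mathcal M_k(\Sigma_{g,n})$ is open and closed in $\mathcal M(\Sigma_{g,n})$ and $\mathcal M_k^d(\Sigma_{g,n})=\pi^{-1}(\mathcal M_k(\Sigma_{g,n}))$ inherits this. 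Restricting the principal $\mathbb R_{>0}^V$-bundle $\pi$ of Theorem \ref{Kashaev2} to this open and closed subset then yields a principal $\mathbb R_{>0}^V$-bundle over $\mathcal M_k(\Sigma_{g,n})$, which finishes the argument.

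I do not anticipate a serious obstacle. The one point that must be handled with care is the identification used above, namely that the index $\epsilon$ labelling a patch is literally the tuple of geometric triangle signs $\epsilon(t)$ appearing in Formula (\ref{euler}); but this is exactly how $\mathcal R(\mathcal T,\epsilon)$ is defined in Theorem \ref{Kashaev2}, so it amounts to unwinding definitions rather than to a genuine difficulty. As a consistency check one may also observe that the transition rules of Proposition \ref{change} visibly preserve the quantity $\sum_{t}\epsilon(t)$, so the decomposition indexed by $k$ is compatible with all coordinate changes.
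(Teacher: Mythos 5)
Your argument is correct. The paper itself offers no proof of this statement---it is quoted as Kashaev's theorem, with the content imported from \cite{Ka1,Ka2}---so there is no argument of the paper to compare against line by line; but your assembly of it from Theorem \ref{Kashaev1}, Theorem \ref{Kashaev2} and Formula (\ref{euler}) is exactly the intended reading, and it is complete within the framework the paper sets up. The one load-bearing observation, namely that the index $\epsilon$ of a patch $\mathcal R(\mathcal T,\epsilon)$ records the geometric signs of the ideal triangles, so that $e(\rho)=\frac{1}{2}\sum_{t\in T}\epsilon(t)$ is constant on the patch and hence each patch lies entirely in a single $\mathcal M_k^d(\Sigma_{g,n})$, is the right one; the rest (the open cover coming from the finitely many triangulations of Theorem \ref{Kashaev1}, disjointness of the inner unions from Theorem \ref{Kashaev2}, disjointness in $k$ because $e(\rho)$ is well defined) is bookkeeping, as you say. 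Two small remarks: the restriction of the principal $\mathbb R_{>0}^V$-bundle $\pi$ to $\pi^{-1}(\mathcal M_k(\Sigma_{g,n}))$ is automatically a principal bundle over $\mathcal M_k(\Sigma_{g,n})$ whatever the nature of that subset, so the open-and-closed discussion, while correct and a nice by-product, is not needed for that claim; and the aside that $[\rho]\mapsto\frac{1}{2\pi}\int_{\Sigma_{g,n}}(D_{\rho})^*\omega$ is continuous is not something you have established independently---it is cleaner to rest local constancy of $e$ solely on the patch decomposition, which your argument already provides.
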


\section{A trace formula for closed curves}\label{Sec:3}

Throughout this section, we let $\mathcal T$ be an ideal triangulation of $\Sigma_{g,n},$ and let $E$ and $T$ respectively be the set of edges and ideal triangles of $\mathcal T.$ Given the lengths coordinate $(\lambda,\epsilon)\in\mathbb R_{>0}^E\times\{\pm1\}^T,$ up to conjugation, the type-preserving representation $\rho:\pi_1(\Sigma_{g,n})\rightarrow PSL(2,\mathbb R)$ can be reconstructed up to conjugation as follows. Suppose $e$ is an edge of $\mathcal T,$ and $t_1$ and $t_2$ are the two ideal triangles adjacent to $e.$ Let $e_1$ and $e_2$ be the other two edges of $t_1$ and let $e_3$ and $e_4$ be the other two edges of $t_2$ so that the cyclic orders $e\mapsto e_1\mapsto e_2\mapsto e$ and $e\mapsto e_3\mapsto e_4\mapsto e$ coincide with the one induced from the orientation of $\Sigma_{g,n}.$ Define the quantity $X(e)\in\mathbb R_{>0}$ by
$$X(e)=\frac{\lambda(e_2)\lambda(e_4)}{\lambda(e_1)\lambda(e_3)}.$$
Note that if $\rho$ is discrete and faithful, then $X(e)$ is the shear parameter of the corresponding hyperbolic structure at $e.$ (See e.g. \cite{B2}.) It is well known that each immersed closed curve on $\Sigma_{g,n}$ is homotopic to a normal one that transversely intersects each ideal triangle in simple arcs that connect different edges of the triangle. Let $\gamma$ be an immersed oriented closed normal curve on $\Sigma_{g,n}.$  For each edge $e$ intersecting $\gamma,$ define
\begin{equation*}
    S(e)= \left[
      \begin{array}{cc}
        X(e)^{\frac{1}{2}} & 0 \\
        0 & X(e)^{-\frac{1}{2}}
      \end{array} \right].
  \end{equation*}
For each ideal triangle $t$ intersecting $\gamma,$ define \begin{equation*}
   R(t)= \left[
      \begin{array}{cc}
      1 & \epsilon(t) \\
        0 &1
      \end{array} \right]
  \end{equation*}
if $\gamma$ makes a left turn in $t$ (Figure \ref{Fig2} (a)), and define
\begin{equation*}
 R(t)= \left[
      \begin{array}{cc}
      1 & 0\\
       \epsilon(t)  &1
      \end{array} \right]
  \end{equation*}
if $\gamma$ makes a right turn in $t$ (Figure \ref{Fig2} (b)).

\begin{figure}[htbp]
\centering
\includegraphics[scale=0.35]{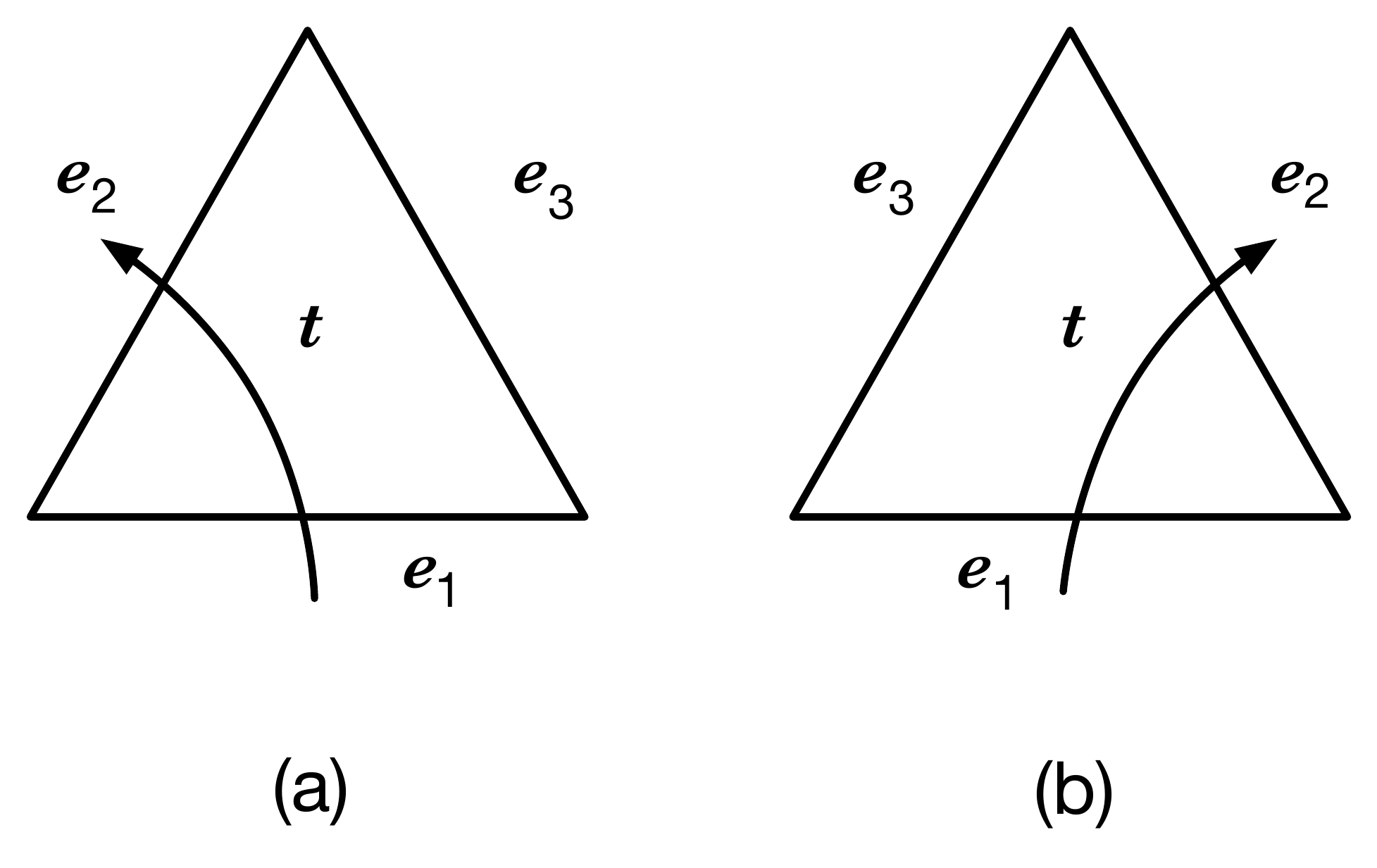}
\caption{}
\label{Fig2}
\end{figure}

\begin{lemma}\label{3.1} Let $e_{i_1},\dots,e_{i_m}$ be the edges and let $t_{j_1},\dots,t_{j_m}$ be the ideal triangles of $\mathcal T$ intersecting $\gamma$ in the cyclic order induced by the orientation of $\gamma$ so that $e_{i_k}$ is the common edge of $t_{j_{k-1}}$ and $t_{j_k}$ for each $k\in\{1,\dots,m\}.$ Then up to a conjugation by an element of $PSL(2,\mathbb R),$
$$\rho([\gamma])=\pm S(e_{i_1})R(t_{j_1})S(e_{i_2})R(t_{j_2})\dots S(e_{i_m})R(t_{j_m}).$$
\end{lemma}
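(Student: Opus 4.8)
The plan is to reconstruct the representation $\rho$ from the lengths coordinates $(\lambda,\epsilon)$ by building an explicit pseudo-developing map adapted to $\mathcal{T}$, and then to track the holonomy along $\gamma$ triangle-by-triangle. Concretely, I would work in the upper half-plane model of $\mathbb{H}^2$ and use the standard Fock-type normalization: lift the normal curve $\gamma$ to the universal cover, let $\widetilde{t}_{j_1},\widetilde{t}_{j_2},\dots$ be the consecutive lifts of the ideal triangles it crosses, and to each lift associate an ideal triangle in $\mathbb{H}^2$ with a distinguished ordering of vertices. The point is that the edge matrices $S(e)$ record the change of the normalizing frame when passing through an edge (the shear $X(e)$ being exactly the cross-ratio datum of the four decorated vertices around $e$, which by the definition $X(e)=\lambda(e_2)\lambda(e_4)/(\lambda(e_1)\lambda(e_3))$ is expressed in $\lambda$-lengths), while the turn matrices $R(t)$ record the rotation inside a triangle, the entry $\epsilon(t)$ being forced by the sign of the ideal triangle as defined in Section~\ref{Sec:2}.

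The key steps, in order: (i) fix a reference: normalize the first lifted triangle $\widetilde{t}_{j_1}$ so that two of its ideal vertices are $0$ and $\infty$ and the third is $\pm 1$ according to $\epsilon(t_{j_1})$; (ii) observe that crossing the edge $e_{i_{k+1}}$ from $\widetilde{t}_{j_k}$ to $\widetilde{t}_{j_{k+1}}$, after re-normalizing so the shared edge is $\{0,\infty\}$, is implemented by the diagonal matrix $S(e_{i_{k+1}})$, because the shear parameter dictates how the apex of the next triangle sits relative to the apex of the current one; (iii) the passage \emph{inside} triangle $t_{j_k}$, moving from the incoming edge to the outgoing edge, is the M\"obius map fixing the appropriate vertex and sending one edge to the next, which in the chosen normalization is exactly $R(t_{j_k})$ — upper triangular for a left turn, lower triangular for a right turn, with off-diagonal entry $\epsilon(t_{j_k})\in\{\pm1\}$ dictated by whether the lifted triangle is positively or negatively oriented; (iv) composing these maps around the closed loop $\gamma$ once returns the developing image of the starting edge to a $\rho([\gamma])$-translate of itself, so the product $S(e_{i_1})R(t_{j_1})\cdots S(e_{i_m})R(t_{j_m})$ equals $\rho([\gamma])$ up to the overall conjugation absorbed by the initial normalization, and up to sign since we are in $PSL(2,\mathbb{R})$; (v) check consistency: the product is independent of which intersection point of $\gamma$ with an edge we declare to be the start (cyclic invariance up to conjugation) and independent of the chosen lift.

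The main obstacle I expect is step (iii), pinning down \emph{precisely} why the turn matrix carries $\epsilon(t)$ rather than $1$ in the off-diagonal slot, and why left turns give upper-triangular while right turns give lower-triangular matrices, with the correct signs. This requires carefully fixing orientation conventions — the cyclic order $e\mapsto e_1\mapsto e_2\mapsto e$ matching the orientation of $\Sigma_{g,n}$, the orientation of $\gamma$, and the sign convention $\epsilon(t)=\pm1$ from the developing map — and verifying that a negatively oriented lifted triangle genuinely reverses the sign of the translation-like entry compared to a positively oriented one. A clean way to handle this is to treat the Fuchsian case first, where all $\epsilon(t)=1$ and the formula reduces to the classical Thurston/Fock shear-coordinate holonomy formula, and then note that flipping $\epsilon(t)$ to $-1$ corresponds to composing with the orientation-reversing reflection of that ideal triangle, which conjugates/alters exactly the relevant $R(t)$ factor by negating its off-diagonal entry; since the $S(e)$ factors depend only on $\lambda$-lengths and not on the signs, this localizes the entire sign bookkeeping to the $R(t)$ matrices. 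Once the local picture is unambiguous, assembling the global product and quotienting by the $PSL(2,\mathbb{R})$-conjugation and the $\pm$ ambiguity is routine.
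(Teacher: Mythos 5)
Your proposal is correct and follows essentially the same route as the paper, which only sketches the argument by reference to Lemma 3 of \cite{BW} (and Exercises 8.5--8.7, 10.14 of \cite{B}): one tracks a normalizing frame (the paper phrases it as the image of the unit tangent vector $\frac{\partial}{\partial y}$ at $i$) along the developed image of $\gamma$, with each edge crossing contributing $\pm S(e)$ and each turn inside an ideal triangle contributing $\pm R(t)$. Your extra care with the sign conventions in step (iii), reducing to the Fuchsian case and localizing the effect of $\epsilon(t)=-1$ to the off-diagonal entry of $R(t)$, is exactly the case-by-case bookkeeping the paper leaves to the cited references.
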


\begin{proof} The proof is parallel to that of Lemma 3 in \cite{BW}. The idea is to keep track of the image of the unit tangent vector $\frac{\partial}{\partial y}$ at $i\in\mathbb H^2$ under $\rho([\gamma]).$ The contributions of each edge $e$ and of each ideal triangle $t$ intersecting $\gamma$  to $\rho([\gamma])$ are respectively $\pm S(e)$ and $\pm R(t).$ See also Exercise 8.5-8.7 and 10.14 in \cite{B}.
\end{proof}

For each puncture $v$ of $\Sigma_{g,n},$ let $\gamma_v$ be the simple closed curve going counterclockwise around $v$ once. By Lemma \ref{3.1},  the image of $\gamma_v$ is up to conjugation 
\begin{equation*}
 \rho([\gamma_v])= \pm\left[
      \begin{array}{cc}
      1 & \psi_{v,\epsilon}(\lambda)\\
    0&1
      \end{array} \right],
  \end{equation*}
where $\psi_{v,\epsilon}$ is a rational function of $\lambda$ depending on $\epsilon.$ Therefore, $\rho$ is type-preserving if and only if $\psi_{v,\epsilon}(\lambda)\neq0$ for all punctures $v.$ The following proposition gives a more precise description of this rational function in Theorem \ref{Kashaev2}.

\begin{proposition}[Kashaev]\label{rational} Let $(\lambda,\epsilon)\in\mathbb R_{>0}^E\times\{\pm1\}^T,$ let $V$ be the set of punctures of $\Sigma_{g,n}$ and let $\psi_{\epsilon}$ be the rational function defined by 
$$\psi_{\epsilon}=\prod_{v\in V}\psi_{v,\epsilon}.$$ Then $(\lambda,\epsilon)$ defines a type-preserving representation if and only if $\psi_{\epsilon}(\lambda)\neq0.$
\end{proposition}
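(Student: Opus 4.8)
The plan is to reconstruct $\rho$ from the data $(\lambda,\epsilon)$ and then to detect type-preservation from the images of the peripheral curves $\gamma_v$. First note that any $(\lambda,\epsilon)\in\mathbb R_{>0}^E\times\{\pm1\}^T$ determines a representation $\rho$, not yet known to be type-preserving: from the quantities $X(e)$ and the signs $\epsilon(t)$ one develops the triangulation in $\mathbb H^2$ triangle by triangle --- sending a base triangle of $\widetilde{\mathcal T}$ to a fixed ideal triangle with the orientation dictated by its sign, and gluing in each neighbouring ideal triangle across the edge already placed, using $X(e)$ as the shearing parameter and $\epsilon(t)$ to prescribe its orientation. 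This produces a $\rho$-equivariant piecewise-smooth map $D\colon\widetilde{\Sigma_{g,n}}\to\mathbb H^2$ and a homomorphism $\rho\colon\pi_1(\Sigma_{g,n})\to PSL(2,\mathbb R)$; equivalently, since $\pi_1(\Sigma_{g,n})$ is free and the matrix products of Lemma~\ref{3.1} are invariant under the elementary homotopies of normal curves (by the same argument as for Lemma~\ref{3.1}), the rule $[\gamma]\mapsto\pm S(e_{i_1})R(t_{j_1})\cdots S(e_{i_m})R(t_{j_m})$ is well defined and multiplicative. By construction every edge of $\mathcal T$ is $\rho$-admissible.

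Next I would compute $\rho([\gamma_v])$. A normal representative of $\gamma_v$ is the link of the puncture $v$: it meets, in cyclic order, the edges $f_1,\dots,f_m$ incident to $v$ and the ideal triangles $t_1,\dots,t_m$ having $v$ as a vertex, and it turns the same way in every $t_k$, so all the matrices $R(t_k)$ are unitriangular of the same type. Hence by Lemma~\ref{3.1}, up to conjugation $\rho([\gamma_v])$ is, up to a sign, the product $\prod_{k=1}^m S(f_k)R(t_k)$ of the diagonal matrices $S(f_k)=\mathrm{diag}\big(X(f_k)^{1/2},X(f_k)^{-1/2}\big)$ and the unitriangular matrices $R(t_k)$, so it is triangular. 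The key point is the combinatorial identity $\prod_{k=1}^m X(f_k)=1$: writing each $X(f_k)$ out via the two triangles $t_{k-1},t_k$ adjacent to $f_k$, the $\lambda$-lengths of the edges opposite $v$ and of the $f_k$ themselves telescope around the link. (This identity must hold, for otherwise the diagonal part of $\rho([\gamma_v])$ would be nontrivial and $\rho$ could never be type-preserving, contradicting that Teichm\"uller space embeds in $\mathcal M_{\mathcal T}(\Sigma_{g,n})$.) It follows that the diagonal part of $\rho([\gamma_v])$ is trivial, that the half-integral powers of the $\lambda$-lengths cancel, and that expanding the product yields
\[
\rho([\gamma_v])=\pm\left[\begin{array}{cc}1&\psi_{v,\epsilon}(\lambda)\\0&1\end{array}\right],\qquad
\psi_{v,\epsilon}(\lambda)=\sum_{k=1}^m\epsilon(t_k)\prod_{i=1}^k X(f_i),
\]
which is a Laurent polynomial, hence a rational function, in the $\lambda$-lengths.

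Finally, a matrix of the form $\pm\left[\begin{smallmatrix}1&c\\0&1\end{smallmatrix}\right]$ is parabolic exactly when $c\neq 0$ and equals the identity when $c=0$; so $\rho$ is type-preserving if and only if $\psi_{v,\epsilon}(\lambda)\neq 0$ for every puncture $v$, which is equivalent to $\psi_\epsilon(\lambda)=\prod_{v\in V}\psi_{v,\epsilon}(\lambda)\neq 0$. This also identifies $\psi_\epsilon$ with the rational function appearing in Theorem~\ref{Kashaev2}. I expect the main difficulty to be purely organizational: fixing the left/right-turn convention of Figure~\ref{Fig2} and the cyclic-orientation convention in the definition of $X(e)$ consistently enough that the link of $v$ contributes only unitriangular factors of one type and that the telescoping $\prod_{k=1}^m X(f_k)=1$ comes out with the correct exponents; once these conventions are pinned down, everything else is a short computation.
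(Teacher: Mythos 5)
Your proposal is correct and takes essentially the same route as the paper, which likewise reconstructs $\rho$ from $(\lambda,\epsilon)$ and applies Lemma \ref{3.1} to the peripheral curve $\gamma_v$ to see that $\rho([\gamma_v])=\pm\left[\begin{smallmatrix}1&\psi_{v,\epsilon}(\lambda)\\0&1\end{smallmatrix}\right]$, so that type-preservation fails exactly when some $\psi_{v,\epsilon}(\lambda)=0$, i.e.\ when $\psi_\epsilon(\lambda)=0$. Your extra details --- the telescoping identity $\prod_k X(f_k)=1$ forced by the $\lambda$-length definition of the shears and the explicit expansion $\psi_{v,\epsilon}=\sum_k\epsilon(t_k)\prod_{i\leqslant k}X(f_i)$ --- are correct and simply make explicit what the paper leaves implicit in citing Kashaev.
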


The following theorem provides a more direct way to calculate the absolute values of the traces of closed curves using the $\lambda$-lengths, which is first found by Sun and the author in \cite{SY}. We include a proof here for the reader's convenience. For each ideal triangle $t$ intersecting $\gamma,$ let $e_{1}$ be the edge of $t$ at which $\gamma$ enters,  let $e_{2}$ be the edge of $t$ at which $\gamma$ leaves and let $e_{3}$ be the other edge of $t.$ See Figure \ref{Fig2}. Define
\begin{equation*}
  M(t)= \left[
      \begin{array}{cc}
      \lambda(e_{1}) & \epsilon(t)\lambda(e_{3}) \\
        0 &\lambda(e_{2})
      \end{array} \right]
  \end{equation*}
if $\gamma$ makes a left turn in $t$, and define
\begin{equation*}
 M(t)= \left[
      \begin{array}{cc}
      \lambda(e_{2})  & 0\\
       \epsilon(t)\lambda(e_{3})  &\lambda(e_{1})
      \end{array} \right]
  \end{equation*}
if $\gamma$ makes a right turn in $t.$

\begin{theorem}\label{trace formula} For an immersed closed normal curve $\gamma$ on $\Sigma_{g,n},$ let $e_{i_1},\dots,e_{i_m}$ be the edges and let $t_{j_1},\dots,t_{j_m}$ be the ideal triangles of $\mathcal T$ intersecting $\gamma$ in the cyclic order following the orientation of $\gamma$ so that $e_{i_k}$ is the common edge of $t_{j_{k-1}}$ and $t_{j_k}$ for each $k\in\{1,\dots,m\}.$  Then
\begin{equation}\label{trace}
\big|tr\rho([\gamma])\big|=\frac{\big|tr\big(M(t_{j_1})\dots M(t_{j_m})\big)\big|}{\lambda(e_{i_1})\dots\lambda(e_{i_m})}.
\end{equation}

\end{theorem}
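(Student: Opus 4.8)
The plan is to deduce (\ref{trace}) from Lemma \ref{3.1} by factoring each $M(t)$ through the corresponding $S(e)$ and $R(t)$ matrices. First I would observe that, from the definition of $X(e)$, we may write $X(e) = \lambda(e_2)\lambda(e_4)/(\lambda(e_1)\lambda(e_3))$ where $e_1,e_2$ are the non-$e$ edges of one adjacent triangle and $e_3,e_4$ those of the other; the key point is that, for a normal curve $\gamma$ crossing the edge $e_{i_k}$ between triangles $t_{j_{k-1}}$ and $t_{j_k}$, the edge $e_{i_k}$ plays the role of the "leaving" edge $e_2$ of $M(t_{j_{k-1}})$ and simultaneously the "entering" edge $e_1$ of $M(t_{j_k})$. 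So the natural strategy is to introduce, for each ideal triangle $t$ traversed by $\gamma$ with entering edge $e_1$, leaving edge $e_2$, third edge $e_3$, the diagonal matrix $N(e_1,e_2) = \mathrm{diag}(\lambda(e_1)^{1/2}\lambda(e_2)^{-1/2},\, \lambda(e_1)^{-1/2}\lambda(e_2)^{1/2})$ or its relevant one-sided version, and then verify the local identity
\[
M(t) = \big(\text{something diagonal in the }\lambda\text{'s}\big)\cdot S(e)^{\pm1}\cdot R(t)\cdot\big(\text{something diagonal}\big),
\]
i.e. that $M(t)$ differs from $S(e_{\text{in}})R(t)$ only by left/right multiplication by diagonal matrices whose entries are powers of $\lambda(e_1),\lambda(e_2)$ and by an overall scalar $\sqrt{\lambda(e_1)\lambda(e_2)}$. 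Concretely one checks, for the left-turn case, that
\[
\begin{bmatrix}\lambda(e_1) & \epsilon(t)\lambda(e_3)\\ 0 & \lambda(e_2)\end{bmatrix}
= \sqrt{\lambda(e_1)\lambda(e_2)}\ \cdot\ D_1\ \begin{bmatrix}1 & \epsilon(t)\\ 0 & 1\end{bmatrix}\ D_2
\]
for suitable diagonal $D_1, D_2$ built from $\lambda(e_1)/\lambda(e_2)$ and the third-edge $\lambda$'s cancel appropriately, and similarly with lower-triangular matrices in the right-turn case.

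The second step is the telescoping. Multiplying the local identities around $\gamma$, the interior diagonal factors $D_2$ (from triangle $t_{j_k}$) and $D_1$ (from triangle $t_{j_{k+1}}$) combine across the shared edge $e_{i_{k+1}}$ to reproduce exactly the matrix $S(e_{i_{k+1}})$ appearing in Lemma \ref{3.1} — this is where the formula $X(e) = \lambda(e_2)\lambda(e_4)/(\lambda(e_1)\lambda(e_3))$ is used, since $S(e)$ has entries $X(e)^{\pm1/2}$ and each of $\lambda(e_1),\dots,\lambda(e_4)$ is contributed by one of the two diagonal factors meeting at $e$. The scalar prefactors $\sqrt{\lambda(e_1)\lambda(e_2)}$ from each of the $m$ triangles multiply to give $\prod_k \lambda(e_{i_k})$ exactly (each edge $e_{i_k}$ is counted once as an entering edge and once as a leaving edge, contributing $\lambda(e_{i_k})^{1/2}$ twice). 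Hence $M(t_{j_1})\cdots M(t_{j_m}) = \big(\prod_k\lambda(e_{i_k})\big)\cdot C\,\big(S(e_{i_1})R(t_{j_1})\cdots S(e_{i_m})R(t_{j_m})\big)\,C^{-1}$ for some fixed matrix $C$ coming from the leftover boundary diagonal factors, and taking traces and absolute values (the $\pm$ ambiguity from $PSL$ disappearing) gives (\ref{trace}) via Lemma \ref{3.1}.

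The main obstacle I expect is bookkeeping: getting the diagonal factors $D_1, D_2$ exactly right so that (i) the interior factors reassemble precisely into $S(e)$ with the correct exponent sign depending on the left/right turn types of the two triangles at $e$, and (ii) the third-edge $\lambda(e_3)$ terms, which appear inside $M(t)$ but not in $R(t)$, cancel out of the final expression. One must be careful that "entering" and "leaving" edges, and the sign conventions for $\epsilon(t)$ and for left versus right turns, are consistent with the conventions fixed before Lemma \ref{3.1}; a clean way to organize this is to treat the four cases (the turn in $t_{j_{k-1}}$ and the turn in $t_{j_k}$ each being left or right) separately at each edge and confirm the diagonal factors match in all four. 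Once the local identity is pinned down in one case, the others follow by the evident symmetry (transpose / conjugation by the antidiagonal matrix $\left[\begin{smallmatrix}0&1\\1&0\end{smallmatrix}\right]$), so the real work is a single careful $2\times 2$ computation.
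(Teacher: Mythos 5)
Your proposal is correct and follows essentially the same route as the paper: the paper factors each $M(t_{j_k})$ as $\sqrt{\lambda(e_{i_k})\lambda(e_{i_{k+1}})}\,S(t_{j_k},e_{i_k})R(t_{j_k})S(t_{j_k},e_{i_{k+1}})$ with diagonal matrices $S(t,e)$ built from the ratios of the other two edge lengths (so your $D_1,D_2$ do involve the third edge, e.g. $D_1=\mathrm{diag}\bigl(\sqrt{\lambda(e_3)/\lambda(e_2)},\sqrt{\lambda(e_2)/\lambda(e_3)}\bigr)$ in the left-turn case), and then uses the splitting $S(e_{i_k})=S(t_{j_{k-1}},e_{i_k})S(t_{j_k},e_{i_k})$ together with cyclic invariance of the trace and Lemma \ref{3.1}, exactly as in your telescoping step. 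The only adjustment to your sketch is this precise form of the diagonal factors; otherwise the argument is the paper's.
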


\begin{proof} For each ideal triangle $t$ and an edge $e$ of $t,$ let $e'$ and $e''$ be the other two edges of $t$ so that the cyclic order $e\mapsto e'\mapsto e''\mapsto e$ coincides with the one induced by the orientation of $\Sigma_{g,n}.$ Define the matrix
\begin{equation*}
 S(t,e)= \left[
      \begin{array}{cc}
   \sqrt{\frac{\lambda(e'')}{\lambda(e')}}  & 0\\
  0  &    \sqrt{\frac{\lambda(e')}{\lambda(e'')}}
      \end{array} \right].
  \end{equation*}
Then
\begin{equation}\label{split}
S(e_{i_k})=S(t_{j_{k-1}},e_{i_k})S(t_{j_k},e_{i_k})
\end{equation}
 for each $k\in\{1,\dots,m\},$ where as a convention $t_{j_{1-1}}=t_{j_m}.$ A case by case calculation shows that
\begin{equation}\label{local}
S(t_{j_k},e_{i_k})R(t_{j_k})S(t_{j_k},e_{i_{k+1}})=\frac{M(t_{j_k})}{\sqrt{\lambda(e_{i_k})\lambda(e_{i_{k+1}})}}
\end{equation}
for each $k\in\{1,\dots,m\},$ where as a convention $e_{i_{m+1}}=e_{i_1}.$ By Lemma \ref{3.1}, (\ref{split}), (\ref{local}) and the fact that $tr(AB)=tr(BA)$ for any two matrices $A$ and $B,$ we have
\begin{equation*}
\begin{split}
\big|tr\rho([\gamma])\big|=&\big|tr\big(S(e_{i_1})R(t_{j_1})\dots S(e_{i_m})R(t_{j_m})\big)\big|\\
=&\big|tr\big(S(t_{j_1},e_{i_1})R(t_{j_1})S(t_{j_1},e_{i_2})\dots S(t_{j_m},e_{i_m})R(t_{j_m}) S(t_{j_m},e_{i_1})\big)\big|\\
=&\frac{\big|tr\big(M(t_{j_1})\dots M(t_{j_m})\big)\big|}{\lambda(e_{i_1})\dots\lambda(e_{i_m})}.
\end{split}
\end{equation*}
\end{proof}

\begin{remark} Formula (\ref{trace}) is first obtained by Roger-Yang\,\cite{RY} for decorated hyperbolic surfaces, i.e., discrete and faithful decorated representations, using the skein relations, where their formula includes both the traces of closed geodesics and the $\lambda$-lengths of geodesics arcs connecting the punctures. It is interesting to know whether there is a similar formula for the $\lambda$-lengths of arcs for the non-Fuchsian decorated representations. 
\end{remark}

As a consequence of Theorem \ref{trace formula}, we have the following

\begin{theorem}\label{dominance} 
\begin{enumerate}[(1)]
\item For every non-Fuchsian type-preserving representation $\rho:\Sigma_{g,n}\rightarrow PSL(2,\mathbb R),$ there exists a Fuchsian type-preserving representation $\rho'$ such that 
$$\big|tr\rho([\gamma])\big|\leqslant\big|tr\rho'([\gamma])\big|$$  
for each $[\gamma]\in\pi_1(\Sigma_{g,n}),$ and the strict equality holds for at least one $\gamma.$

\item Conversely, for almost every Fuchsian type-preserving representation $\rho':\Sigma_{g,n}\rightarrow PSL(2,\mathbb R)$ and for each $k$ with $|k|<2g-2+n$ and $\mathcal M_k(\Sigma_{g,n})\neq\emptyset,$ there exists a type-preserving representation $\rho$ with $e(\rho)=k$ such that
$$\big|tr\rho([\gamma])\big|\leqslant\big|tr\rho'([\gamma])\big|$$  
 for each $[\gamma]\in\pi_1(\Sigma_{g,n}),$ and the strict equality holds for at least one $\gamma.$
\end{enumerate} 
\end{theorem}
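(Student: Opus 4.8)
The key observation is Formula (\ref{trace}): for any fixed $\rho$-admissible ideal triangulation $\mathcal{T}$ and any closed normal curve $\gamma$, the absolute value of $tr\,\rho([\gamma])$ is computed from the $\lambda$-lengths and the signs $\epsilon(t)$ via the matrices $M(t)$, where $\epsilon(t)$ enters only through the off-diagonal entry $\epsilon(t)\lambda(e_3)$. The plan for part (1) is as follows. Given a non-Fuchsian type-preserving $\rho$, decorate it arbitrarily and pick a $\rho$-admissible $\mathcal{T}$ (Theorem \ref{Kashaev1}); this gives a lengths coordinate $(\lambda,\epsilon)$ where $\epsilon$ is not constant (else $e(\rho)=\pm(2g-2+n)$ by (\ref{euler}), contradicting non-Fuchsian). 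Define $\rho'$ to be the representation with the same $\lambda$-lengths but with $\epsilon'\equiv 1$ on every triangle — equivalently, replace each $M(t)$ by the matrix $M^+(t)$ having a $+\lambda(e_3)$ in the off-diagonal slot. By (\ref{euler}) this $(\lambda, \epsilon'\equiv 1)$ has relative Euler class $2g-2+n$, hence (provided it genuinely lands in $\mathcal{R}(\mathcal{T},\epsilon'\equiv1)$, i.e. $\psi_{\epsilon'}(\lambda)\neq0$, which is automatic on the extremal stratum) defines a Fuchsian $\rho'$; in fact with $\epsilon\equiv1$ this is exactly Penner's coordinates and $\rho'$ is the hyperbolic structure with the given $\lambda$-lengths. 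Now for each normal $\gamma$, writing the product $M(t_{j_1})\cdots M(t_{j_m})$ versus $M^+(t_{j_1})\cdots M^+(t_{j_m})$: the denominators $\lambda(e_{i_1})\cdots\lambda(e_{i_m})$ in (\ref{trace}) are identical, so it suffices to show $|tr(M(t_{j_1})\cdots M(t_{j_m}))|\leqslant |tr(M^+(t_{j_1})\cdots M^+(t_{j_m}))|$. The entries of each $M(t)$ and $M^+(t)$ agree in absolute value, with $M^+(t)$ being \emph{nonnegative}; the trace of a product of nonnegative matrices dominates (in absolute value) the trace of the corresponding product where some entries have been sign-flipped, because expanding $tr(M_1\cdots M_m)$ as a sum of products of entries, the $\epsilon\equiv1$ version makes every summand nonnegative while the general version can only introduce cancellation. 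This gives the inequality, and strictness for at least one $\gamma$ follows because $\rho\neq\rho'$ (they have different Euler class) so their trace functions on $\pi_1$ differ; the equality $|tr\,\rho([\gamma])|=|tr\,\rho'([\gamma])|$ for all $\gamma$ would force $\rho$ Fuchsian.

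For part (2), the plan is to run the same construction in reverse. Start with a Fuchsian $\rho'$, decorate it, choose an admissible $\mathcal{T}$, and read off its Penner coordinates $(\lambda,\epsilon'\equiv1)\in\mathbb{R}_{>0}^E$. Now for a target relative Euler class $k$ with $|k|<2g-2+n$ and $\mathcal{M}_k(\Sigma_{g,n})\neq\emptyset$, pick any sign vector $\epsilon\in\{\pm1\}^T$ with $\sum_t \epsilon(t)=2k$ realizing a nonempty stratum; by Theorem \ref{Kashaev3} and Theorem \ref{Kashaev2}, the pair $(\lambda,\epsilon)$ defines a type-preserving $\rho$ with $e(\rho)=k$ provided $\psi_\epsilon(\lambda)\neq0$ (Proposition \ref{rational}). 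The locus $\{\lambda: \psi_\epsilon(\lambda)=0\}$ is a proper algebraic subvariety of $\mathbb{R}_{>0}^E$, hence measure zero; so for almost every $\rho'$ (equivalently almost every $\lambda$ in Penner coordinates — and this is independent of the triangulation used since transition maps are the birational maps of Proposition \ref{change}), such a $\rho$ exists. The trace inequality $|tr\,\rho([\gamma])|\leqslant|tr\,\rho'([\gamma])|$ is then exactly the nonnegative-matrix trace domination argument from part (1), applied with the \emph{same} $\lambda$-lengths, and strictness for some $\gamma$ holds as before since $\rho$ and $\rho'$ have distinct Euler classes.

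The main obstacle I anticipate is the trace-domination lemma: making precise and clean the claim that, for a cyclic product of $2\times 2$ matrices whose entries have prescribed absolute values, the choice of signs making each matrix entrywise nonnegative maximizes $|tr|$. This needs the combinatorial structure of the $M(t)$ matrices — each is either upper- or lower-triangular with the variable sign confined to one corner — which is what makes the expansion of the trace into monomials behave well (the diagonal entries are always positive, and in each monomial appearing in $tr(M_{j_1}\cdots M_{j_m})$ the sign is a product of some subset of the $\epsilon(t_{j_k})$, so setting all $\epsilon=1$ turns every monomial nonnegative). A careful statement is that $tr(M_{j_1}\cdots M_{j_m}) \leqslant tr(M^+_{j_1}\cdots M^+_{j_m})$ and also $-tr(M_{j_1}\cdots M_{j_m})\leqslant tr(M^+_{j_1}\cdots M^+_{j_m})$, both following from the monomial expansion; I would prove this by induction on $m$, tracking that the partial products $M_{j_1}\cdots M_{j_\ell}$ are dominated entrywise in absolute value by $M^+_{j_1}\cdots M^+_{j_\ell}$, which is preserved under multiplication by a further nonnegative matrix. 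A secondary point to handle carefully is verifying that the $\epsilon\equiv 1$ coordinate genuinely defines a Fuchsian representation (not an empty stratum) — this is where one invokes that Penner's decorated Teichmüller space is the $\epsilon\equiv1$ component, so $\psi_{\epsilon\equiv1}(\lambda)\neq0$ for all $\lambda\in\mathbb{R}_{>0}^E$.
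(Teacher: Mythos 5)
Your construction and the core inequality are exactly the paper's: decorate $\rho$, take a $\rho$-admissible $\mathcal T$, keep the same $\lambda$-lengths and flip all signs to $+1$ to get a point of Penner's decorated Teichm\"uller space, hence a Fuchsian $\rho'$; then compare the two sides of Formula (\ref{trace}), whose monomials agree up to signs determined by $\epsilon$, so the all-positive choice dominates. Part (2) is likewise the paper's argument (choose $\epsilon$ with $\sum_t\epsilon(t)=2k$, use that $\Omega(\mathcal T,\epsilon)$ is the complement of the zero locus of $\psi_\epsilon$, hence full measure). Your entrywise-domination induction is a perfectly adequate substitute for the paper's ``same summands with different coefficients'' phrasing.

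The one place where your argument has a genuine gap is the strictness claim. You assert that strict inequality for some $\gamma$ follows because $\rho$ and $\rho'$ have different relative Euler classes, ``so their trace functions on $\pi_1$ differ,'' and that equality of $|tr|$ for all $\gamma$ ``would force $\rho$ Fuchsian.'' Neither implication is proved, and neither is immediate: the absolute trace function is a conjugation invariant, and concluding from $|tr\,\rho|=|tr\,\rho'|$ on all of $\pi_1(\Sigma_{g,n})$ that $\rho$ is conjugate to $\rho'$ (or even has extremal Euler class) requires nontrivial character-variety input about characters determining non-elementary representations, plus care with lifts to $SL(2,\mathbb R)$ and with $PGL(2,\mathbb R)$-conjugation, which reverses the Euler class. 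The paper avoids all of this by reading strictness off the same monomial expansion used for the inequality: since $\rho$ is non-Fuchsian, by (\ref{euler}) some triangle $t$ has $\epsilon(t)=-1$, and for a curve $\gamma$ crossing $t$ so that the off-diagonal entry $\epsilon(t)\lambda(e_3)$ of $M(t)$ actually contributes to the trace (note that if $\gamma$ makes only left turns, or only right turns, the product is triangular and no off-diagonal entry enters the trace, which is why peripheral curves give no strictness), the expansion of $tr\bigl(M(t_{j_1})\cdots M(t_{j_m})\bigr)$ contains a nonzero monomial with a negative coefficient, whence $\bigl|\sum\pm m_i\bigr|<\sum m_i=\bigl|tr\,\rho'([\gamma])\bigr|$. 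You should replace your Euler-class argument for strictness (in both parts) by this observation, which is already available inside your own expansion; as written, that step does not stand on its own.
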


\begin{proof} For (1), by Theorem \ref{Kashaev1}, there exists a $\rho$-admissible ideal triangulation $\mathcal T.$ Choose arbitrarily a decoration $d$ of $\rho,$ and let $(\rho',d')$ be the decorated representation that has the same $\lambda$-lengths of $(\rho,d)$ and has the positive signs for all the ideal triangles. By (\ref{euler}) and Goldman's result in \cite{G1}, $\rho'$ is Fuchsian. Applying Formula (\ref{trace}) to $|tr\rho([\gamma])|$ and $|tr\rho'([\gamma])|,$ we see that they have the same summands with different coefficients $\pm 1$, and the coefficients for the later are all positive. Since each summand is a product of the $\lambda$-lengths, which is positive, the inequality follows. Since $\rho$ is non-Fuchsian, by (\ref{euler}), there must be an ideal triangle $t$ that has negative sign in $(\rho,d).$ Therefore, if $\gamma$ intersects $t,$ then some of the summands in the expression of $|tr\rho([\gamma])|$ has negative coefficients, and the inequality for $\gamma$ is strict.

For (2), choose arbitrarily  an ideal triangulation $\mathcal T$ of $\Sigma_{g,n},$ and let $T$ be the set of ideal triangles of $\mathcal T.$ By Theorems \ref{Kashaev1}, \ref{Kashaev2} and \ref{Kashaev3}, if $\mathcal M_k(\Sigma_{g,n})\neq\emptyset,$ then there exists $\epsilon\in\{\pm 1\}^T$ such that $\sum_{t\in T}\epsilon(t)=2k$ and the subset $\mathcal R(\mathcal T,\epsilon)$ is homeomorphic via the lengths coordinate to a full measure open subset $\Omega(\mathcal T,\epsilon)$ of $\mathbb R_{>0}^E.$ For each $\lambda\in\Omega(\mathcal T,\epsilon),$ let $(\rho,d)$ be the decorated representation determined by $(\lambda,\epsilon).$ Then $e(\rho)=k.$ On the other hand, $\mathbb R_{>0}^E$ is identified with the decorated Teichm\"uller space via the lengths coordinate, hence $\lambda$ determines a Fuchsian type-preserving representation $\rho'.$ By the same argument in (1), the inequality holds for $\rho$ and $\rho'
,$ and is strict for $\gamma$ intersecting the ideal triangles $t$ with $\epsilon(t)=-1.$
\end{proof}
 
 \begin{remark} It is very interesting to know if (2) holds for every Fuchsian type-preserving representation. This amounts to ask wether 
 $$\bigcup_{\mathcal T,\epsilon}\Omega(\mathcal T,\epsilon)=\mathbb R_{>0}^E,$$ 
 where the union is over all the ideal triangulations $\mathcal T$ of $\Sigma_{g,n}$ and all $\epsilon$ that gives the right relative Euler class.
 \end{remark}


\section{Tetrahedral triangulations}\label{Sec:4}

A \emph{tetrahedral triangulation} of the four-punctured sphere $\Sigma_{0,4}$ is an ideal triangulation of $\Sigma_{0,4}$ that is combinatorially equivalent to the boundary of an Euclidean tetrahedron (Figure \ref{Fig3}(a)). A pair of edges of a tetrahedral triangulation are called opposite if they are opposite edges of the tetrahedron. Let $v_1,\dots,v_4$ be the four punctures of $\Sigma_{0,4}.$ In the rest of this paper, for each tetrahedral triangulation $\mathcal T,$ we will let $t_i$ be the unique ideal triangle of $\mathcal T$ disjoint from the puncture $v_i$ and let $e_{ij}$ be the unique edge of $\mathcal T$ connecting the punctures $v_i$ and $v_j.$ We respectively denote by $x$ the pair of opposite edges $\{e_{12},e_{34}\},$ by $y$ the pair $\{e_{13},e_{24}\}$ and by $z$ the pair $\{e_{14},e_{23}\}.$ See Figure \ref{Fig3} (b).

 \begin{figure}[htbp]
\centering
\includegraphics[scale=0.25]{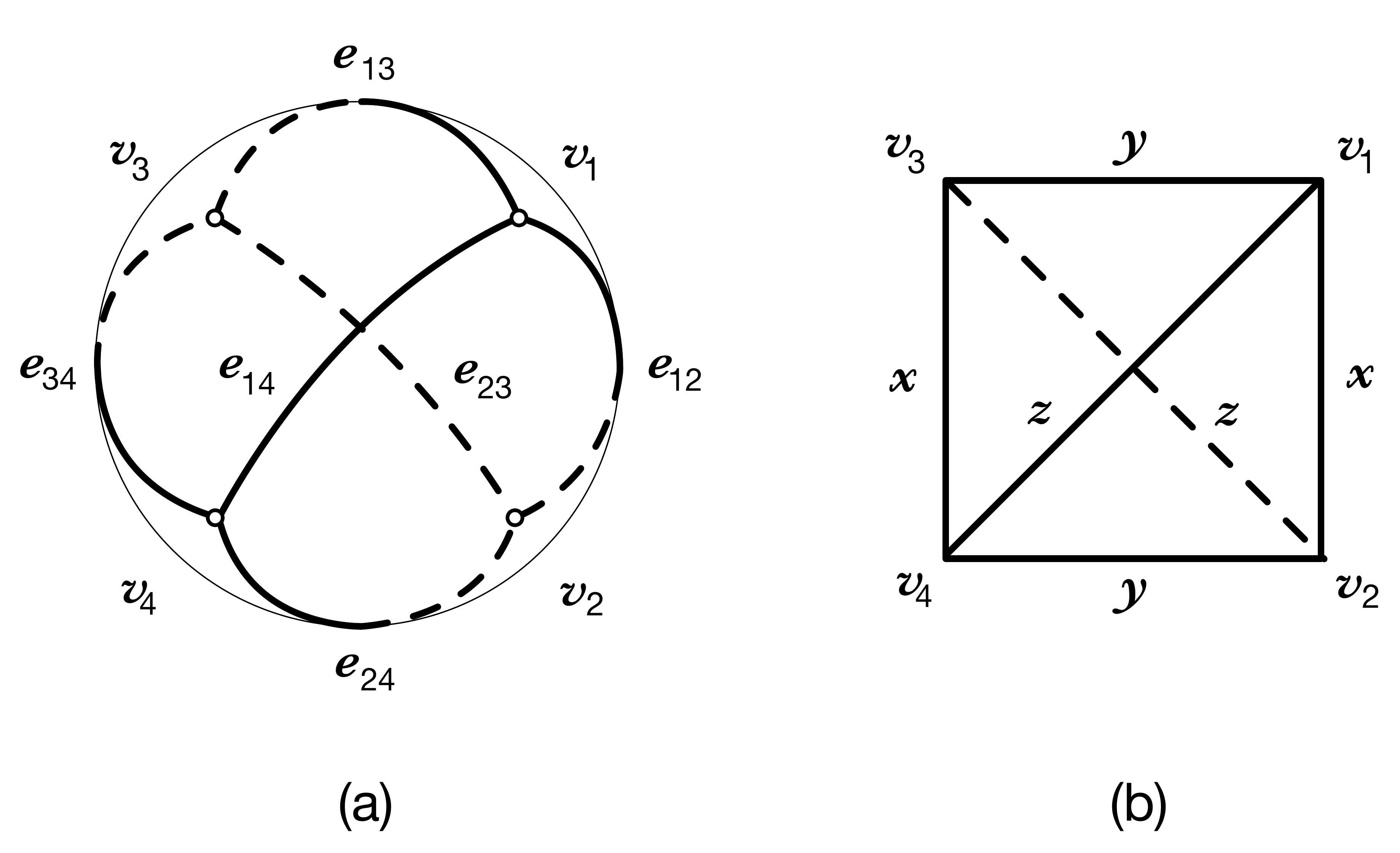}
\caption{}
\label{Fig3}
\end{figure}

A non-peripheral simple closed curve on $\Sigma_{0,4}$ is \emph{distinguished} in a tetrahedral triangulation $\mathcal T$ if it is disjoint from a pair of opposite edges of $\mathcal T$ and intersects each of the other four edges at exactly one point. In each tetrahedral triangulation, there are exactly three distinguished simple closed curves. We respectively denote by $X,$ $Y$ and $Z$ the distinguished simple closed curves disjoint from the pair of opposite edges $x,$ $y$ and $z.$ See Figure \ref{Fig4}. \begin{figure}[htbp]
\centering
\includegraphics[scale=0.30]{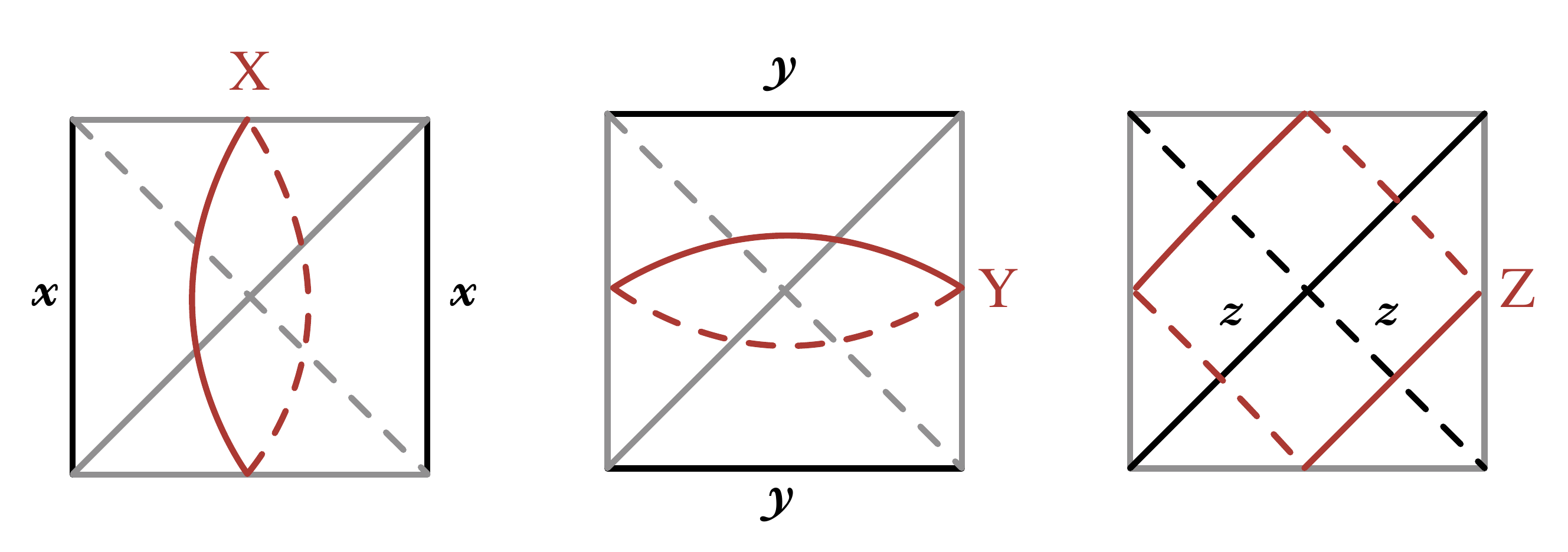}
\caption{}
\label{Fig4}
\end{figure}
The curves $X,$ $Y$ and $Z$ mutually intersect at exactly two points. On the other hand, for each triple of simple closed curves that mutually intersect at two points, there is a unique tetrahedral triangulation in which these three curves are distinguished. In particular, each non-peripheral simple closed curve on $\Sigma_{0,4}$ is distinguished in some tetrahedral triangulation. Note that being the $X$-, $Y$- or $Z$-curve is independent of the choice of the tetrahedral triangulation, since, for example, the curve $X$ always separates $\{v_1,v_2\}$ from $\{v_3,v_4\}.$ In the rest of this paper, we will call a simple closed curve an $X$- (resp. $Y$- or $Z$-) \emph{curve} if it is disjoint from the pair of opposite edges $x$ (resp. $y$ or $z$) of some tetrahedral triangulation. In this way, we get a tri-coloring of the set of non-peripheral simple closed curves on $\Sigma_{0,4}.$

A \emph{simultaneous diagonal switch} at a pair of opposite edges of $\mathcal T$ is an operation that  simultaneously does diagonal switches at this pair of edges. See Figure \ref{Fig6} (a). Denote respectively by $S_x,$ $S_y$ and $S_z$ the simultaneous diagonal switches at the pair of opposite edges $x,$ $y$ and $z.$ Then $S_x$ (reps. $S_y$ and $S_z$) changes the $X$- (resp. $Y$- and $Z$-) curve and leaves the other two distinguished simple closed curves unchanged. See Figure \ref{Fig6} (b).

 \begin{figure}[htbp]
\centering
\includegraphics[scale=0.30]{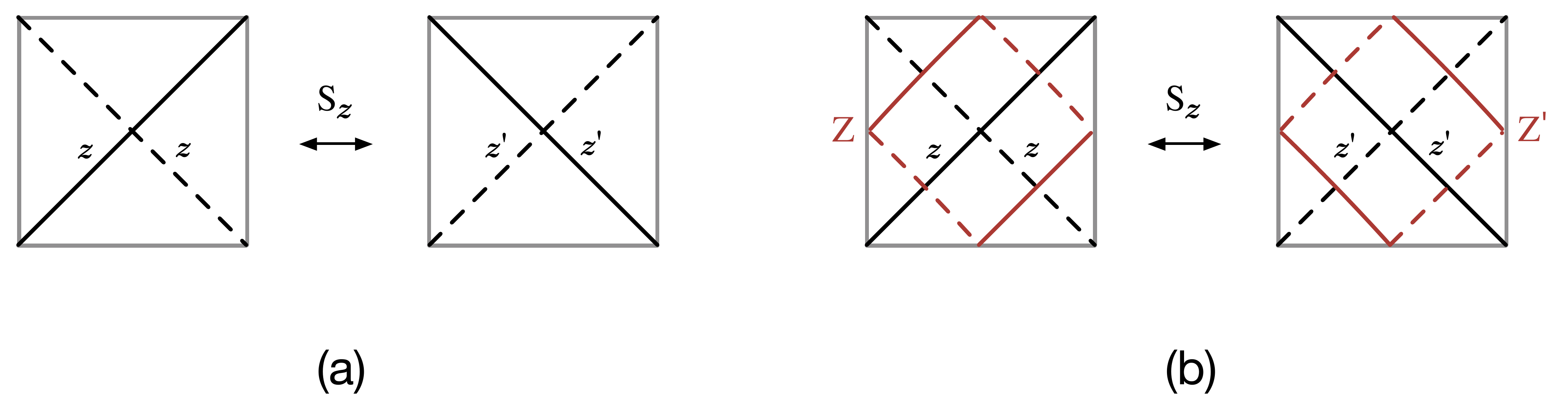}
\caption{}
\label{Fig6}
\end{figure}

The relationship between tetrahedral triangulations, simultaneous diagonal switches and non-peripheral simple closed curves can be described by (the dual of) the Farey diagram. Recall that the Farey diagram $\mathcal F$ is an ideal triangulation of $\mathbb H^2$ whose vertices are the extended rational numbers $\mathbb Q\cup\{\infty\}\subset\partial\mathbb H^2,$ and the dual Ferey diagram $\mathcal F^*$ is a countably infinite trivalent tree properly embedded in $\mathbb H^2.$ Each vertex of $\mathcal F$ corresponds to a non-peripheral simple closed curve on $\Sigma_{0,4},$ each edge of $\mathcal F$ connects two vertices corresponding to two simple closed curves that intersect at exactly two points and each ideal triangle of $\mathcal F$ corresponds to a triple of simple closed curves mutually intersecting at two points. (See e.g. \cite{KS}.) Therefore, each vertex of the dual graph $\mathcal F^*$ corresponds to a tetrahedral triangulation of $\Sigma_{0,4},$ each edge of $\mathcal F^*$ corresponds to a simultaneous diagonal switch and each connected component of $\mathbb H^2\setminus\mathcal F^*$ corresponds to a non-peripheral simple closed curves on $\Sigma_{0,4}.$ See Figure \ref{Fig7}. Since $\mathcal F^*$ is connected, any tetrahedral triangulation can be obtained from another by doing a finitely sequence of simultaneous diagonal switches. 

 \begin{figure}[htbp]
\centering
\includegraphics[scale=0.4]{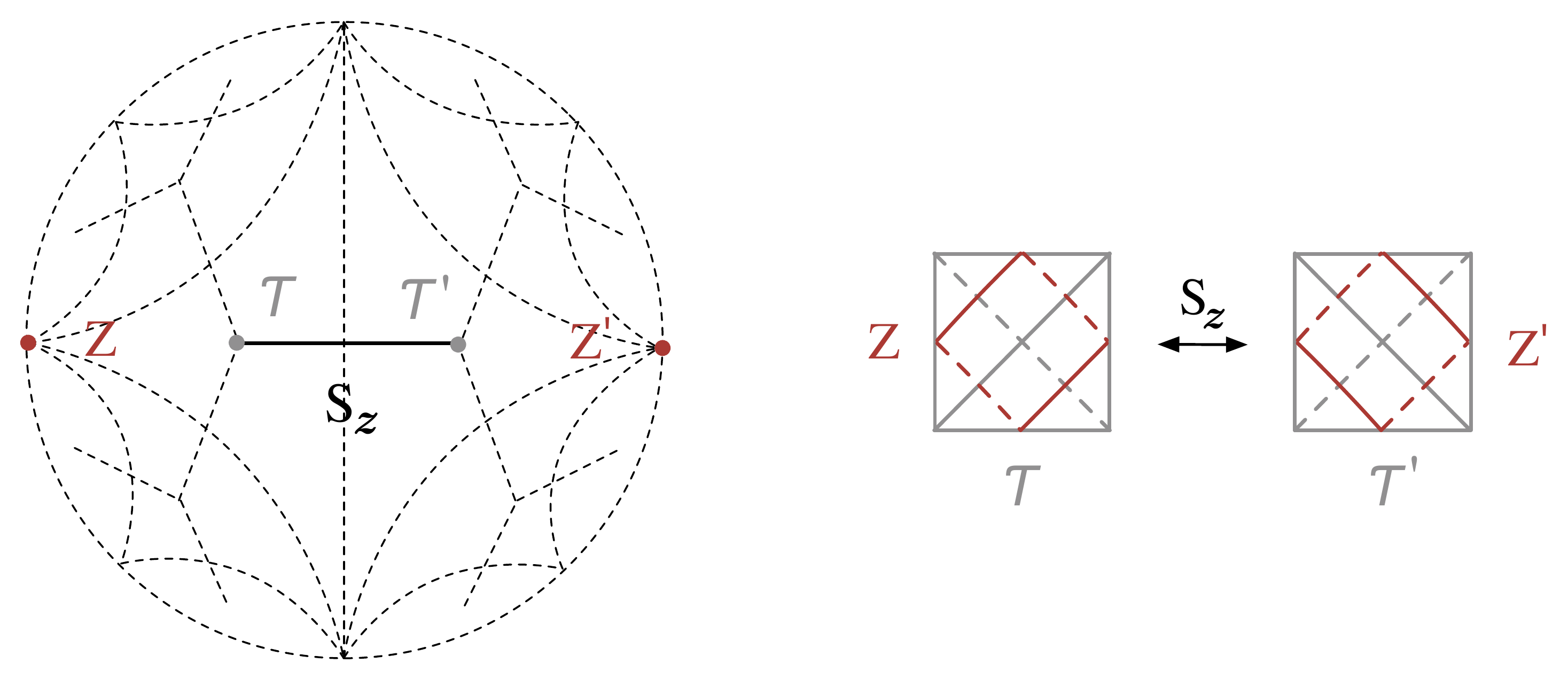}
\caption{}
\label{Fig7}
\end{figure}

We close up this section by showing the relationship between simultaneous diagonal switches and the mapping classes of $\Sigma_{0,4}.$

\begin{proposition}\label{evenodd}
A composition of an even number of simultaneous diagonal switches determines an element of $Mod(\Sigma_{0,4}).$ 
Conversely, any element of $Mod(\Sigma_{0,4})$ is determined by a composition of an even number of simultaneous diagonal switches. 
\end{proposition}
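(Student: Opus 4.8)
The plan is to establish a bijective-up-to-isotopy correspondence between compositions of simultaneous diagonal switches and certain mapping classes, using the combinatorial model provided by the dual Farey tree $\mathcal F^*$. Recall that each vertex of $\mathcal F^*$ is a tetrahedral triangulation, and a composition of simultaneous diagonal switches is an edge-path in $\mathcal F^*$ starting at a fixed base vertex $\mathcal T_0$ and ending at some vertex $\mathcal T$. Since both $\mathcal T_0$ and $\mathcal T$ are ideal triangulations of $\Sigma_{0,4}$ with the same number of edges and triangles, and any two ideal triangulations are related by a diffeomorphism (after relabeling punctures), the key point is to control the labeling of the four punctures. A single simultaneous diagonal switch $S_x$, $S_y$, or $S_z$ fixes the colors of the three distinguished curves (it only changes one of them to a new curve of the same color), but when we re-identify the new tetrahedral triangulation with the standard labeled model $\mathcal T_0$, we pick up a permutation of the punctures. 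First I would compute exactly which permutation of $\{v_1,\dots,v_4\}$ each elementary switch $S_x, S_y, S_z$ induces: from Figure~\ref{Fig6}, $S_x$ swaps $v_1\leftrightarrow v_2$ and $v_3\leftrightarrow v_4$ (the two pairs separated by the $X$-curve), and similarly for $S_y$ and $S_z$. These three involutions, together with the identity, generate the Klein four-group $\mathbb Z/2\times\mathbb Z/2$ inside the symmetric group $S_4$ on the punctures.

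Next I would make precise the sense in which a switch ``determines'' a mapping class. Given a composition $S = S_{c_k}\circ\cdots\circ S_{c_1}$ of simultaneous diagonal switches (each $c_i\in\{x,y,z\}$) carrying $\mathcal T_0$ to $\mathcal T_0$ again as an unlabeled triangulation — equivalently, the endpoint vertex of the edge-path in $\mathcal F^*$ corresponds to the same underlying triangulation up to relabeling — there is a diffeomorphism $\phi_S$ of $\Sigma_{0,4}$ realizing the combinatorial change, canonical up to isotopy once we demand it fix the punctures setwise. The catch is that $\phi_S$ need only permute the punctures according to the product $\sigma(c_1)\cdots\sigma(c_k)\in\mathbb Z/2\times\mathbb Z/2\subset S_4$ of the elementary permutations; it is a genuine element of $Mod(\Sigma_{0,4})$ (fixing each puncture) precisely when this product is trivial, which happens iff each of $x,y,z$ appears an even number of times — in particular iff $k$ is even, since the three generators of $\mathbb Z/2\times\mathbb Z/2$ satisfy $\sigma(x)\sigma(y)\sigma(z)=1$, so any even-length word in them multiplies to the identity. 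Wait — that last implication needs care: an even-length word in $\{\sigma(x),\sigma(y),\sigma(z)\}$ need not be trivial (e.g. $\sigma(x)\sigma(y)$ has order... it equals $\sigma(z)$, so actually $\sigma(x)\sigma(y)=\sigma(z)\neq 1$). So I would instead argue: the composition $\phi_S$ composed with a single extra switch $S_c$ corrects the labeling, or more cleanly, I would \emph{allow} $\phi_S$ to be a self-diffeomorphism permuting punctures and then observe that the subgroup of $Mod^{\pm}(\Sigma_{0,4})$ (mapping classes permuting punctures) generated by such $\phi_{S_c}$ surjects onto $Mod(\Sigma_{0,4})$ via words whose total length has the parity of the puncture-permutation, and even length gives the even subgroup. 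The honest statement to prove is: even compositions land in $Mod(\Sigma_{0,4})$, and conversely $Mod(\Sigma_{0,4})$ is generated by products $S_{c}\circ S_{c'}$ of two switches.

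For the forward direction, then: a composition of an even number $2\ell$ of simultaneous diagonal switches gives an edge-path of even length in $\mathcal F^*$; grouping it into $\ell$ pairs, each pair $S_c\circ S_{c'}$ is an automorphism of the dual tree fixing the base vertex up to the combinatorial symmetry, hence is realized by a diffeomorphism of $\Sigma_{0,4}$ whose puncture-permutation $\sigma(c)\sigma(c')$ lies in the even part. But I realize the cleanest route avoids parity bookkeeping entirely: identify the full (extended) mapping class group $Mod^\pm(\Sigma_{0,4})$ — allowing permutations of punctures — with the group of simplicial automorphisms of the Farey complex $\mathcal F$ modulo the hyperelliptic kernel, equivalently with $PGL(2,\mathbb Z)$, a standard fact (see \cite{KS}). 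Under this identification, each simultaneous diagonal switch $S_x,S_y,S_z$ corresponds to a specific reflection (order-two element) of $PGL(2,\mathbb Z)$ acting on $\mathcal F^*$, and the subgroup these generate is all of $PGL(2,\mathbb Z)$ since $\mathcal F^*$ is a tree on which the switch-moves act transitively on edges with the flip of the tree. The index-two subgroup $PSL(2,\mathbb Z)\cong Mod(\Sigma_{0,4})/(\text{hyperelliptic involutions})$ is then exactly the set of \emph{even} words in these three reflections (since each reflection has determinant $-1$). Tracking through the hyperelliptic kernel, which acts trivially on curves and is accounted for by the $\pm I$ ambiguity already present, yields the statement. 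I would present the argument in this $PGL(2,\mathbb Z)$ language, verifying on the three generators that $S_c$ maps to a determinant $-1$ matrix and that the three together generate, and then reading off both directions of the proposition.

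The main obstacle I anticipate is the parity/labeling bookkeeping: making rigorous the claim that the ambiguity in ``the diffeomorphism realizing a switch'' is exactly a puncture-permutation in the Klein four-group, and that this permutation is trivial on an even word but can be nontrivial on an odd word, so that only even compositions descend to $Mod(\Sigma_{0,4})$ proper. This requires care because an even word in the three order-two generators of $\mathbb Z/2\times\mathbb Z/2$ can still be nontrivial in $S_4$; the resolution is that such a nontrivial even permutation is realized, but it is realized by an element of $Mod(\Sigma_{0,4})$ \emph{only} after composing with the hyperelliptic involution, which itself is (isotopic to) the product of all the ``wrong'' puncture-swaps and acts trivially on the character variety. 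Keeping the hyperelliptic involution's role straight — it is the reason $Mod(\Sigma_{0,4})$ rather than $PSL(2,\mathbb Z)$ appears, and it is the source of the $\pm$ in Lemma~\ref{3.1} — is the delicate point. Everything else (connectedness of $\mathcal F^*$, each elementary switch realized by an explicit diffeomorphism from Figure~\ref{Fig6}, the three generators generating) is routine.
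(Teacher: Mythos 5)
There is a genuine gap, and it begins with your very first computation. A single simultaneous diagonal switch is \emph{not} realized, up to isotopy, by a diffeomorphism permuting the punctures by $(v_1v_2)(v_3v_4)$ (resp.\ the other Klein four-group elements): the orientation-preserving mapping classes $\psi$ with $\psi(\mathcal T)=S_x(\mathcal T)$ all induce \emph{odd} permutations of the punctures. You can see this on the pillowcase model $\Sigma_{0,4}=(\mathbb R^2/\mathbb Z^2)/\pm$ with branch points $p_1=[0,0],$ $p_2=[1/2,0],$ $p_3=[1/2,1/2],$ $p_4=[0,1/2]$: the affine class $(u,v)\mapsto(u,u+v)$ sends the triple of distinguished slopes $\{0,1,\infty\}$ to the adjacent Farey triangle $\{1,2,\infty\},$ i.e.\ it realizes exactly one simultaneous diagonal switch, and it exchanges $p_2$ and $p_3$ while fixing $p_1$ and $p_4$ --- a transposition; composing with the stabilizer of $\mathcal T$ (which induces only rotations of the tetrahedron, an $A_4$) yields only the other odd permutations. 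Consequently the whole Klein-four bookkeeping, and its proposed ``resolution,'' collapses: an even composition needs no hyperelliptic correction at all, because every pair of switches is already realized by a puncture-fixing class --- in fact by an explicit Dehn twist, $S_zS_y=D_X,$ $S_xS_z=D_Y,$ etc.\ (this is precisely the paper's forward-direction argument, Figure \ref{Fig8}). Under your bookkeeping $S_zS_y$ would carry the nontrivial permutation $\sigma(z)\sigma(y)=\sigma(x)$ and would lie in $Mod(\Sigma_{0,4})$ only after composing with a hyperelliptic involution, contradicting $S_zS_y=D_X$; and for an odd word no such correction can ever produce a puncture-fixing class, since the permutations available are odd. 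Note also that an element defined only ``up to the hyperelliptic involutions'' is not an element of $Mod(\Sigma_{0,4})$ in the sense of this paper (punctures fixed pointwise), so even if that construction went through it would not prove the statement.

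The $PGL(2,\mathbb Z)$ shortcut misidentifies both the groups and the parity invariant. Here $Mod(\Sigma_{0,4})$ is the pure mapping class group: it is free of rank two, contains no hyperelliptic involutions, and acts on slopes through $\Gamma(2)/\{\pm I\},$ an index-six subgroup of $PSL(2,\mathbb Z)$; the group whose quotient by the hyperelliptic Klein four-group is $PSL(2,\mathbb Z)$ is the larger mapping class group in which punctures may be permuted, so ``even words $=PSL(2,\mathbb Z)=Mod(\Sigma_{0,4})/(\text{hyperelliptics})$'' is a statement about the wrong group. Determinant parity is not the invariant detecting switch-parity either: as the example above shows, a single switch is realized by an orientation-preserving (impure) class, hence by elements of $PSL(2,\mathbb Z),$ not only by reflections. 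The invariant you are missing --- and the key idea of the paper's proof of the converse --- is the parity of the induced permutation of the three pairs of opposite edges $\{x,y,z\},$ packaged in the paper as the sign of a tetrahedral triangulation (the cyclic order of $x,y,z$ at a puncture compared with the orientation of $\Sigma_{0,4}$): an orientation-preserving, puncture-fixing diffeomorphism preserves this sign, while each simultaneous diagonal switch reverses it, so for $\phi\in Mod(\Sigma_{0,4})$ the path in the tree $\mathcal F^*$ from $\mathcal T$ to $\phi(\mathcal T)$ has even length. Combined with the fact that a pure mapping class is determined by $\phi(\mathcal T)$ (the stabilizer of $\mathcal T$ in the torsion-free pure group is trivial), this expresses $\phi$ as an even composition of switches; your sketch establishes neither of these two points.
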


\begin{proof} Let $\mathcal T$ be a tetrahedra triangulation of $\Sigma_{0,4}.$ We write $\phi=S'S$ if $\phi$ is the self-diffeomorphism of $\Sigma_{0,4}$ such that the tetrahedral triangulation $\phi(\mathcal T)$ is obtained from $\mathcal T$ by doing the simultaneous diagonal switch $S$ followed by the simultaneous diagonal switch $S'.$ Then $D_X=S_zS_y$ and  $D_Y=S_xS_z.$ See Figure \ref{Fig8}.
 \begin{figure}[htbp]
\centering
\includegraphics[scale=0.30]{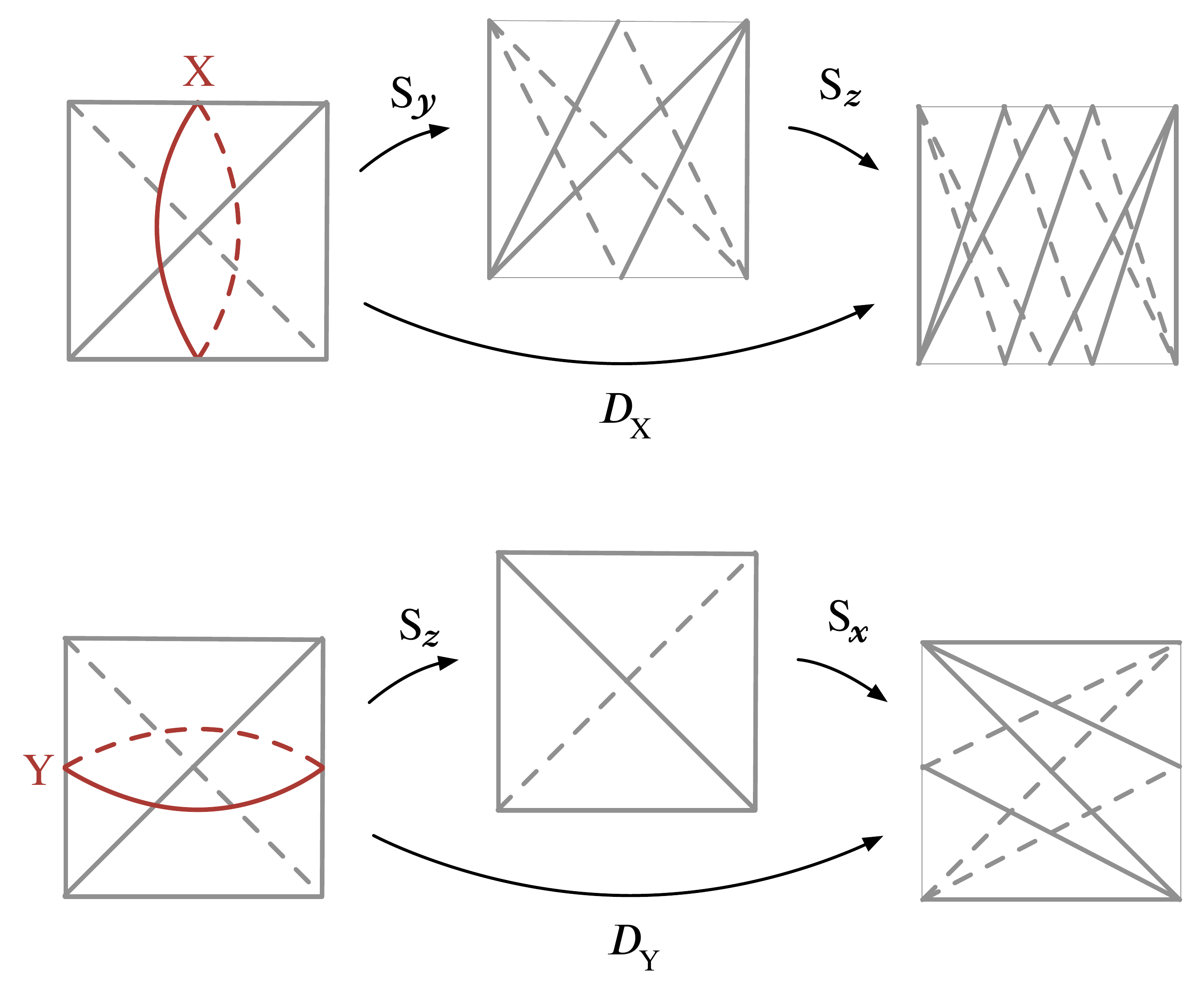}
\caption{}
\label{Fig8}
\end{figure}
Similarly, we have $S_yS_x=D_Z,$ $S_yS_z=D_X^{-1},$ $S_xS_z=D_Y^{-1}$ and $S_xS_y=D_X^{-1}.$ Thus, any composition of an even number of simultaneous diagonal switches determines an element of $Mod(\Sigma_{0,4}).$

For the converse statement, we define a cyclic order on the set $\{x,y,z\}$ of paris of opposite edges of $\mathcal T$ as follows. Since each puncture $v$ of $\Sigma_{0,4}$ is adjacent to three edges $e,$ $e'$ and $e''$ with $e\in x,$ $e'\in y$ and $e''\in z,$ the orientation of $\Sigma_{0,4}$ induces a cyclic order on the set $\{e,e',e''\}$ around $v,$ inducing a cyclic order on the set $\{x,y,z\}.$ It is easy to check that this cyclic order is independent of the choose of $v,$ hence is well defined. We call the \emph{sign} of a tetrahedral triangulation $\mathcal T$ positive if the cyclic order $x\mapsto y\mapsto z\mapsto x$ coincides with the one induced from the orientation, and negative if otherwise. It easy to see that a simultaneous diagonal switch changes the sign of $\mathcal T,$ and an orientation preserving self-diffeomorphism of $\Sigma_{0,4}$ preserves the sign of $\mathcal T.$ Since the dual Farey diagram $\mathcal F^*$ is a connected tree, for any self-diffeomorphism $\phi$ of $\Sigma_{0,4},$ up to redundancy there is a unique path of $\mathcal F^*$ connecting the vertices $\mathcal T$ and $\phi(\mathcal T).$ Since $\mathcal T$ and $\phi(\mathcal T)$ have the same sign, the path consists of an even number of edges, corresponding to an even number of simultaneous diagonal switche $S_1,\dots, S_{2m}.$ Then $\phi=\phi_k\circ\dots\circ\phi_1,$ where $\phi_k=S_{2k}S_{2k-1}.$ 
\end{proof}

\section{Bowditch's question}\label{Sec:5}

Let $\rho$ be a type-preserving representation of $\pi_1(\Sigma_{0,4})$ and let $d$ be a decoration of $\rho.$ Suppose $\mathcal T$ is a $\rho$-admissible tetrahedral triangulation of $\Sigma_{0,4},$ $E$ and $T$ respectively are the sets of edges and ideal triangles of $\mathcal T,$ and $(\lambda,\epsilon)\in\mathbb R_{>0}^E\times\{\pm1\}^T$ is the lengths coordinate of $[(\rho,d)]\in\mathcal M^d_{\pm1}(\Sigma_{0,4}).$  Let $v_1,\dots,v_4$ be the punters of $\Sigma_{0,4},$ let $t_i$ be the ideal triangle of $\mathcal T$ disjoint from $v_i$ and let $e_{ij}$ be the edge of $\mathcal T$ connecting the punctures $v_i$ and $v_j.$  Define the quantities $\lambda(x)=\lambda(e_{12})\lambda(e_{34}),$ $\lambda(y)=\lambda(e_{13})\lambda(e_{24})$ and $\lambda(z)=\lambda(e_{14})\lambda(e_{23}).$ The quantities $\lambda(x),$ $\lambda(y)$ and $\lambda(z)$ will play a central role in the rest of this paper. 


\subsection{A proof of Theorem \ref{main1}}

Suppose $e(\rho)=1.$ Then by (\ref{euler}), there is exactly one ideal triangle, say $t_1,$ such that $\epsilon(t_1)=-1$ and $\epsilon(t_i)=1$ for $i\neq 1.$ As a direct consequence of Lemma \ref{3.1} and Theorem \ref{trace formula}, we have the following lemmas. 

\begin{lemma}\label{5.1} Let $\gamma_i$ be the simple closed curve going counterclockwise around the puncture $v_i$ once. Then up to conjugation, the $\rho$-image of the peripheral element $[\gamma_1]\in\pi_1(\Sigma_{0,4})$ is 
\begin{equation*}
\pm\left[
      \begin{array}{cc}
      1 & \lambda(x)+\lambda(y)+\lambda(z)\\
    0&1
      \end{array} \right],
 \end{equation*} and the $\rho$-image of the other peripheral elements $[\gamma_2],$ $[\gamma_3]$ and $[\gamma_4]$ are respectively \begin{equation*}
      \pm\left[
      \begin{array}{cc}
      1 & \lambda(y)+\lambda(z)-\lambda(x)\\
    0&1
      \end{array} \right],
      \pm\left[
      \begin{array}{cc}
      1 & \lambda(x)+\lambda(z)-\lambda(y)\\
    0&1
      \end{array} \right] \text{ and }
      \pm\left[
      \begin{array}{cc}
      1 & \lambda(x)+\lambda(y)-\lambda(z)\\
    0&1
      \end{array} \right].
  \end{equation*}
\end{lemma}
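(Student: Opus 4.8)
The plan is to apply Lemma \ref{3.1} directly to each peripheral curve $\gamma_i$ after recording how it sits relative to the tetrahedral triangulation $\mathcal T$. First I would fix the combinatorics: the curve $\gamma_i$ bounds a once-punctured disk around $v_i$, and in a tetrahedral triangulation the link of $v_i$ is a triangle whose three sides are the three edges $e_{ij}$, $e_{ik}$, $e_{il}$ incident to $v_i$, crossed in cyclic order, with $\gamma_i$ passing through the three ideal triangles other than $t_i$. So $\gamma_i$ has combinatorial length $m=3$, crossing the three edges incident to $v_i$ and making a turn of a fixed handedness (say always a left turn, by choosing the orientation of $\gamma_i$ appropriately) in each of the three triangles $t_j$, $t_k$, $t_l$. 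I would draw this once from Figure \ref{Fig3} and read off, for $\gamma_1$ say, the ordered sequence of edges $e_{12}, e_{13}, e_{14}$ (up to cyclic rotation) and triangles $t_2, t_3, t_4$.

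Next I would assemble the product. Using Lemma \ref{3.1}, $\rho([\gamma_i])$ is conjugate to $\pm S(e)R(t)S(e')R(t')S(e'')R(t'')$ over the three edge/triangle pairs around $v_i$. Since each triangle contributes a left-turn matrix $R(t_j)=\left[\begin{smallmatrix}1&\epsilon(t_j)\\0&1\end{smallmatrix}\right]$ and each edge a diagonal $S(e)=\mathrm{diag}(X(e)^{1/2},X(e)^{-1/2})$, the product is upper triangular, and its trace is $2$, confirming parabolicity; the content is the off-diagonal entry $\psi_{v_i,\epsilon}(\lambda)$. I would expand this $2\times2$ product explicitly: the upper-right entry is a sum of three terms, one for each triangle, where the $j$-th term is $\epsilon(t_j)$ times a ratio of the $X(e)$'s accumulated around the loop. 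Here is where I substitute $X(e)=\lambda(e_2)\lambda(e_4)/(\lambda(e_1)\lambda(e_3))$ from Section \ref{Sec:3} and watch the cross-ratio-type cancellations collapse everything into the quantities $\lambda(x)=\lambda(e_{12})\lambda(e_{34})$, $\lambda(y)=\lambda(e_{13})\lambda(e_{24})$, $\lambda(z)=\lambda(e_{14})\lambda(e_{23})$. With $\epsilon(t_1)=-1$ and $\epsilon(t_2)=\epsilon(t_3)=\epsilon(t_4)=+1$, the loop around $v_1$ sees only $+$ signs (it avoids $t_1$), giving $\lambda(x)+\lambda(y)+\lambda(z)$, while each of the loops around $v_2,v_3,v_4$ passes through $t_1$ exactly once, flipping exactly the one term corresponding to the pair of opposite edges that the curve $X$, $Y$, or $Z$ is disjoint from, which produces the three signed combinations in the statement.

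Alternatively — and this is probably cleaner to write — I would use the matrix form $M(t)$ of Theorem \ref{trace formula} instead of the $S,R$ form, so that $|tr\rho([\gamma_i])|=|tr(M(t_j)M(t_k)M(t_l))|/(\lambda(e_{ij})\lambda(e_{ik})\lambda(e_{il}))$, multiply out the three triangular $M$-matrices (all left turns, so all of the form $\left[\begin{smallmatrix}\lambda(e_1)&\epsilon(t)\lambda(e_3)\\0&\lambda(e_2)\end{smallmatrix}\right]$), and read off both the diagonal product $\lambda(e_{12})\lambda(e_{34})\cdot(\text{stuff})$ confirming the trace is $\pm(\text{product of }\lambda\text{'s on the loop})$ and the off-diagonal combination giving $\psi_{v_i,\epsilon}$ after dividing out. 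Either route is a direct $2\times2$ computation, but one must be careful about conventions — the cyclic order of edges in each triangle dictated by the orientation of $\Sigma_{0,4}$, which edge is "$e_1$" (entered), which is "$e_2$" (left), which is "$e_3$", and that the three turns really are all the same handedness for the chosen orientation of $\gamma_i$.

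The main obstacle is purely bookkeeping: keeping the edge labels $e_{ij}$ consistent with Figure \ref{Fig3}, tracking which of $\lambda(x)$, $\lambda(y)$, $\lambda(z)$ gets the minus sign in each of $\gamma_2,\gamma_3,\gamma_4$, and making sure the normalization so that the trace is exactly $+2$ (not $-2$) works out — i.e.\ that the sign $\pm$ in front is absorbed correctly. The conceptual point, that $\gamma_1$ is the unique peripheral curve avoiding the negative triangle $t_1$ and hence the unique one with all-positive coefficients, is immediate once the combinatorics of the tetrahedral triangulation is set up; the rest is a short honest computation that I would carry out once and present compactly.
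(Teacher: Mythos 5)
Your proposal is correct and is exactly the paper's route: the paper states Lemma \ref{5.1} as a direct consequence of Lemma \ref{3.1} (equivalently of the $M(t)$-matrices of Theorem \ref{trace formula}), i.e.\ precisely the $2\times 2$ computation you describe, where the loop around $v_i$ crosses the three edges incident to $v_i$ with same-handed turns in the three triangles other than $t_i$, the diagonal entries collapse to $1$, and the off-diagonal entry becomes $\epsilon(t_j)\lambda(x)+\epsilon(t_k)\lambda(y)+\epsilon(t_l)\lambda(z)$ up to a positive factor absorbed by conjugation. Your bookkeeping claims check out (for $\gamma_2,\gamma_3,\gamma_4$ the sign $\epsilon(t_1)=-1$ attaches to the $\lambda(x)$, $\lambda(y)$, $\lambda(z)$ term respectively), so nothing is missing beyond carrying out the computation you outline.
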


\begin{lemma}\label{5.2}
\begin{enumerate}[(1)]
\item The absolute values of the traces of the distinguished simple closed curves $X,$ $Y$ and $Z$ of $\mathcal T$ can be calculated by
\begin{equation}\label{5.1}
\begin{split}
\big|tr\rho([X])\big|&=\frac{\big|\lambda(y)^2+\lambda(z)^2-\lambda(x)^2\big|}{\lambda(y)\lambda(z)},\\
\big|tr\rho([Y])\big|&=\frac{\big|\lambda(x)^2+\lambda(z)^2-\lambda(y)^2\big|}{\lambda(x)\lambda(z)}\ \text{ and}\\
\big|tr\rho([Z])\big|&=\frac{\big|\lambda(x)^2+\lambda(y)^2-\lambda(z)^2\big|}{\lambda(x)\lambda(y)}.
\end{split}
\end{equation}

\item The right hand sides of the equations in (\ref{5.1}) are strictly greater than $2$ if and only if $\lambda(x),$ $\lambda(y)$ and $\lambda(z)$ satisfy one of the following inequalities
\begin{equation}\label{anti-tri}
\begin{split}
\lambda(x)&>\lambda(y)+\lambda(z),\\
\lambda(y)&>\lambda(x)+\lambda(z)\ \text{ or}\\
\lambda(z)&>\lambda(x)+\lambda(y).
\end{split}
\end{equation}
\end{enumerate}
\end{lemma}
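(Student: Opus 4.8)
The plan is to obtain part (1) from a direct application of the trace formula (\ref{trace}), and then to deduce part (2) by the elementary manipulation of the inequality $|a^2+b^2-c^2|>2ab$.

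\emph{Part (1).} I would fix the tetrahedral triangulation $\mathcal T$ as set up in Section \ref{Sec:4}, so that $t_i$ is the ideal triangle disjoint from $v_i$ and hence $t_1$ has edges $e_{23},e_{24},e_{34}$, and cyclically for the rest. The distinguished curve $X$ is disjoint from the opposite pair $x=\{e_{12},e_{34}\}$ and meets each of $e_{13},e_{14},e_{23},e_{24}$ exactly once; tracing it shows that it runs once through each of $t_1,t_2,t_3,t_4$, entering and leaving each of these triangles along the two of its edges that $X$ meets, so that the ``third edge'' is $e_{34}$ in both $t_1$ and $t_2$ and is $e_{12}$ in both $t_3$ and $t_4$. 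Moreover, since $X$ separates $\{v_1,v_2\}$ from $\{v_3,v_4\}$, in each triangle it cuts off the corner at the unique vertex of that triangle lying on the ``near'' side of $X$, and these vertices alternate between the two sides as one runs around $X$; consequently $X$ makes left and right turns alternately (which of the two is called left is immaterial, since (\ref{trace}) takes absolute values). Writing down the matrices $M(t_i)$ of Theorem \ref{trace formula} for this data — with $\epsilon(t_1)=-1$ and $\epsilon(t_2)=\epsilon(t_3)=\epsilon(t_4)=1$ — and abbreviating $\lambda(x)=\lambda(e_{12})\lambda(e_{34})$, $\lambda(y)=\lambda(e_{13})\lambda(e_{24})$, $\lambda(z)=\lambda(e_{14})\lambda(e_{23})$, a short computation in which the mixed off-diagonal terms cancel gives
$$\big|tr\big(M(t_1)M(t_3)M(t_2)M(t_4)\big)\big|=\big|\lambda(y)^2+\lambda(z)^2-\lambda(x)^2\big|.$$
Since the product of the $\lambda$-lengths of the four edges met by $X$ equals $\lambda(e_{13})\lambda(e_{24})\lambda(e_{14})\lambda(e_{23})=\lambda(y)\lambda(z)$, Formula (\ref{trace}) yields the first line of (\ref{5.1}). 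For $Y$ and $Z$ the computation is identical after cyclically relabelling $v_2,v_3,v_4$: this relabelling is orientation preserving, cyclically permutes $x,y,z$, and fixes $t_1$ together with all the signs $\epsilon(t_i)$, so it carries the $X$-identity to the $Y$- and $Z$-identities.

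\emph{Part (2).} Put $p=\lambda(x),\,q=\lambda(y),\,r=\lambda(z)$, all positive. For the $X$-equation, $\frac{|q^2+r^2-p^2|}{qr}>2$ is equivalent to $|q^2+r^2-p^2|>2qr$, which splits into the case $q^2+r^2-p^2>2qr$, i.e. $(q-r)^2>p^2$, i.e. $|q-r|>p$ (so $q>p+r$ or $r>p+q$), and the case $q^2+r^2-p^2<-2qr$, i.e. $(q+r)^2<p^2$, i.e. $p>q+r$. Hence the first right-hand side of (\ref{5.1}) exceeds $2$ if and only if one of the three inequalities (\ref{anti-tri}) holds; since this disjunction is symmetric in $p,q,r$, the same equivalence holds verbatim for the other two right-hand sides of (\ref{5.1}). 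In particular all three right-hand sides of (\ref{5.1}) exceed $2$ simultaneously, and exactly when (\ref{anti-tri}) holds.

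\emph{Main obstacle.} The only delicate step is the combinatorial bookkeeping in Part (1): determining, with the correct orientation conventions, the cyclic sequences of triangles and edges through which $X$, $Y$ and $Z$ pass and the pattern of left and right turns, so that the matrices $M(t_i)$ entering (\ref{trace}) are the right ones; after that the trace computation is mechanical. A minor point worth recording is that the outcome is independent of where one starts tracing a given curve and of the chosen orientation of the curve, because $|tr(\cdot)|$ is invariant under cyclic permutation of a product, under conjugation, and under inversion.
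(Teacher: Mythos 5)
Your proposal is correct and follows essentially the route the paper intends: the paper states Lemma \ref{5.2} as a direct consequence of Lemma \ref{3.1} and Theorem \ref{trace formula}, and your explicit bookkeeping of the triangles, third edges, alternating turns and signs (with $\epsilon(t_1)=-1$), together with the trace computation giving $\lambda(y)^2+\lambda(z)^2-\lambda(x)^2$ over $\lambda(y)\lambda(z)$, is exactly that computation carried out in detail. Your algebraic treatment of part (2) is the same fact the paper phrases via the law of cosines (viewing $\lambda(x),\lambda(y),\lambda(z)$ as Euclidean edge lengths), so there is no substantive difference.
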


Note that reversing the directions of the inequalities in (\ref{anti-tri}), we get the triangular inequality. The idea of the proof of (2) is that if we regard the quantities $\lambda(x),$ $\lambda(y)$ and $\lambda(z)$ as the edge lengths of a Euclidean triangle, then the right hand sides of (\ref{5.1}) are twice of the cosine of the corresponding inner angles. 
The next lemma shows the rule of the change of the quantities $\lambda(x),$ $\lambda(y)$ and $\lambda(z)$ under a simultaneous diagonal switch.

\begin{lemma}\label{lambda'} Suppose $\mathcal T'$ is a tetrahedral triangulation of $\Sigma_{0,4}.$  If $\mathcal T'$ is $\rho$-admissible,
then let $\lambda'$ be the $\lambda$-lengths of $(\rho,d)$ in $\mathcal T',$ and let $\lambda'(x),$ $\lambda'(y)$ and $\lambda'(z)$ be the corresponding quantities.
\begin{enumerate}[(1)]
\item If $\mathcal T'$ is obtained from $\mathcal T$ by doing $S_x,$ then $\mathcal T'$ is $\rho$-admissible if and only if $\lambda(y)\neq\lambda(z).$ In the case that $\mathcal T'$ is $\rho$-admissible,  $\lambda'(y)=\lambda(y),$ $\lambda'(z)=\lambda(z)$ and 
\begin{equation*}
\lambda'(x)=\frac{\big|\lambda(y)^2-\lambda(z)^2\big|}{\lambda(x)}.
\end{equation*}
\item If $\mathcal T'$ is obtained from $\mathcal T$ by doing $S_y,$ then $\mathcal T'$ is $\rho$-admissible if and only if $\lambda(x)\neq\lambda(z).$ In the case that  $\mathcal T'$ is $\rho$-admissible, $\lambda'(x)=\lambda(x),$ $\lambda'(z)=\lambda(z)$ and 
\begin{equation*}
\lambda'(y)=\frac{\big|\lambda(z)^2-\lambda(x)^2\big|}{\lambda(y)}.
\end{equation*}
\item If $\mathcal T'$ is obtained from $\mathcal T$ by doing $S_z,$ then $\mathcal T'$ is $\rho$-admissible if and only if $\lambda(x)\neq\lambda(y).$ In the case that  $\mathcal T'$ is $\rho$-admissible, $\lambda'(x)=\lambda(x),$ $\lambda'(y)=\lambda(y)$ and 
\begin{equation*}
\lambda'(z)=\frac{\big|\lambda(x)^2-\lambda(y)^2\big|}{\lambda(z)}.
\end{equation*}
\end{enumerate}
\end{lemma}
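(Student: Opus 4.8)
The plan is to realize each simultaneous diagonal switch as a composition of two ordinary diagonal switches and apply Kashaev's change of $\lambda$-lengths formula (Proposition \ref{change}) to each of them. I will carry out the argument for $S_x$; the cases of $S_y$ and $S_z$ follow verbatim after cyclically permuting the roles of the three pairs of opposite edges (equivalently, of the punctures).

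First I would fix the combinatorics. Since $t_i$ is the ideal triangle opposite the puncture $v_i$, the three edges of $t_i$ are exactly the $e_{jk}$ with $i\notin\{j,k\}$; in particular $e_{12}$ is the common edge of $t_3$ and $t_4$, while $e_{34}$ is the common edge of $t_1$ and $t_2$. Performing $S_x$ amounts to first switching $e_{12}$ inside the quadrilateral $t_3\cup t_4$ — whose four sides are $e_{13},e_{14},e_{23},e_{24}$ with opposite pairs $\{e_{13},e_{24}\}$ and $\{e_{14},e_{23}\}$ — and then switching $e_{34}$ inside the quadrilateral $t_1\cup t_2$, whose four sides are those same four edges with those same two opposite pairs. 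The two quadrilaterals meet only along their boundary, so after the first switch the edge $e_{34}$ is still present, is still adjacent to $t_1$ and $t_2$, and neither its $\lambda$-length nor the signs $\epsilon(t_1),\epsilon(t_2)$ have changed; likewise the four side edges keep their $\lambda$-lengths. Hence the second switch may be analyzed with the original data, and the new triangulation $\mathcal T'$ has edge set $\{e_{12}',e_{34}',e_{13},e_{14},e_{23},e_{24}\}$, where $e_{12}'$ is produced by the switch at $e_{34}$ and $e_{34}'$ by the switch at $e_{12}$.

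Next I apply Proposition \ref{change} to each step, using that $\epsilon(t_1)=-1$ and $\epsilon(t_2)=\epsilon(t_3)=\epsilon(t_4)=1$. For the switch at $e_{12}$ the two adjacent triangles $t_3,t_4$ have equal signs, so by part (1) of Proposition \ref{change} the result is always admissible and $\lambda(e_{12})\lambda(e_{34}')=\lambda(e_{13})\lambda(e_{24})+\lambda(e_{14})\lambda(e_{23})=\lambda(y)+\lambda(z)$, i.e. $\lambda'(e_{34})=\bigl(\lambda(y)+\lambda(z)\bigr)/\lambda(e_{12})$. For the switch at $e_{34}$ the adjacent triangles $t_1,t_2$ have opposite signs, so by part (2) of Proposition \ref{change} the triangulation $\mathcal T'$ is $\rho$-admissible if and only if the two opposite-side products differ, i.e. $\lambda(e_{13})\lambda(e_{24})\neq\lambda(e_{14})\lambda(e_{23})$, which is exactly $\lambda(y)\neq\lambda(z)$; and in that case $\lambda'(e_{12})=\bigl|\lambda(e_{13})\lambda(e_{24})-\lambda(e_{14})\lambda(e_{23})\bigr|/\lambda(e_{34})=|\lambda(y)-\lambda(z)|/\lambda(e_{34})$. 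Multiplying, $\lambda'(x)=\lambda'(e_{12})\lambda'(e_{34})=|\lambda(y)-\lambda(z)|\bigl(\lambda(y)+\lambda(z)\bigr)/\bigl(\lambda(e_{12})\lambda(e_{34})\bigr)=|\lambda(y)^2-\lambda(z)^2|/\lambda(x)$. Finally, since $e_{13},e_{14},e_{23},e_{24}$ are common edges of $\mathcal T$ and $\mathcal T'$, Proposition \ref{change} also gives that their $\lambda$-lengths are unchanged, so $\lambda'(y)=\lambda(e_{13})\lambda(e_{24})=\lambda(y)$ and $\lambda'(z)=\lambda(e_{14})\lambda(e_{23})=\lambda(z)$; this proves part (1), and parts (2) and (3) are identical up to relabeling.

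The only real work is the bookkeeping: correctly identifying, for each of the two switched edges, which adjacent triangle carries the sign $-1$ (it is always the unique negative triangle $t_1$, which contains one edge from each of $x$, $y$, $z$), matching the labels $e_1,\dots,e_4$ of Proposition \ref{change} to the concrete edges $e_{13},e_{14},e_{23},e_{24}$, and verifying that the two constituent diagonal switches act on disjoint sets of ideal triangles so the intermediate (non-tetrahedral) triangulation retains exactly the data needed for the second application of Proposition \ref{change}. Once this is laid out, everything reduces to the short algebraic manipulation above.
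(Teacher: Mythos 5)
Your proof is correct and takes essentially the same route as the paper: decompose $S_x$ into the two ordinary diagonal switches at $e_{12}$ and $e_{34}$, apply Proposition \ref{change} to each (the equal-sign switch at $e_{12}$ being unconditionally admissible, the opposite-sign switch at $e_{34}$ giving the condition $\lambda(y)\neq\lambda(z)$), and multiply to get $\lambda'(x)=\big|\lambda(y)^2-\lambda(z)^2\big|/\lambda(x)$. Your bookkeeping is in fact slightly more careful than the paper's, both in checking that the intermediate triangulation retains the data needed for the second switch and in placing the denominators $\lambda(e_{12})$ and $\lambda(e_{34})$ on the correct new edges (the paper's two displayed intermediate formulas have them interchanged, which is immaterial since only the product enters the conclusion).
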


\begin{proof} For (1), we have that the edge $e_{12}$  is adjacent to the ideal triangle $t_3$ and $t_4$ with $\epsilon(t_3)=\epsilon(t_4)$ and  $e_{34}$ is adjacent to the ideal triangles $t_1$ and $t_2$ with $\epsilon(t_1)\neq\epsilon(t_2).$ Let $e_{34}'$ and $e_{12}'$ respectively be the edges of $\mathcal T'$ obtained from diagonal switches at $e_{12}$ and $e_{34},$ i.e., $e'_{12}$ is the edge of $\mathcal T'$ connecting the punctures $v_1$ and $v_2$ and $e'_{34}$ is the edge of $\mathcal T'$ connecting the punctures $v_3$ and $v_4.$ By Proposition \ref{change}, $\mathcal T'$ is $\rho$-admissible if and only if $\lambda(e_{13})\lambda(e_{24})\neq\lambda(e_{14})\lambda(e_{23}),$ i.e., $\lambda(y)\neq\lambda(z).$ By Proposition \ref{change} again, if $\mathcal T'$ is $\rho$-admissible, then $\lambda'(e_{ij})=\lambda(e_{ij})$ for $\{i,j\}\neq\{1,2\}$ or $\{3,4\},$ and
$$\lambda'(e_{12}')=\frac{\big|\lambda(e_{13})\lambda(e_{24})-\lambda(e_{14})\lambda(e_{23})\big|}{\lambda(e_{12})}$$and
$$\lambda'(e_{34}')=\frac{\lambda(e_{13})\lambda(e_{24})+\lambda(e_{14})\lambda(e_{23})}{\lambda(e_{34})}.$$
Therefore, $\lambda'(y)=\lambda(y),$ $\lambda'(z)=\lambda(z)$ and $\lambda'(x)=\lambda'(e_{12}')\lambda'(e_{34}')={\big|\lambda(y)^2-\lambda(z)^2\big|}/{\lambda(x)}.$ 

The proofs of (2) and (3) are the similar. \end{proof}

A consequence of Lemma \ref{lambda'} is that the inequalities in (\ref{anti-tri}) are persevered by the simultaneous diagonal switches. 

\begin{lemma}\label{preserve} Suppose $\mathcal T'$ is a $\rho$-admissible tetrahedral triangulation of $\Sigma_{0,4}$ obtained from $\mathcal T$ by doing a simultaneous diagonal switch. Let $\lambda'$ be the $\lambda$-lengths of $(\rho,d)$ in $\mathcal T',$ and let $\lambda'(x),$ $\lambda'(y)$ and $\lambda'(z)$ be the corresponding quantities. Then $\lambda'(x)$, $\lambda'(y)$ and $\lambda'(z)$ satisfy one of the inequalities in (\ref{anti-tri}) if and only if $\lambda(x),$ $\lambda(y)$ and $\lambda(z)$ do.
\end{lemma}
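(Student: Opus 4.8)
The plan is to exploit the symmetry among the three cases $S_x$, $S_y$, $S_z$ and treat only $S_x$; the other two are identical after permuting the roles of $x,y,z$. Throughout write $a=\lambda(x)$, $b=\lambda(y)$, $c=\lambda(z)$, so by Lemma \ref{lambda'}(1) a $\rho$-admissible $S_x$ sends $(a,b,c)$ to $(a',b',c')$ with $b'=b$, $c'=c$ and $a'=|b^2-c^2|/a$, and admissibility means exactly $b\neq c$. The claim to establish is then purely arithmetic: among the positive reals $a,b,c$ with $b\neq c$, one of the three anti-triangular inequalities $a>b+c$, $b>a+c$, $c>a+b$ holds if and only if one of $a'>b'+c'$, $b'>a'+c'$, $c'>a'+b'$ holds. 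Since $b'=b$ and $c'=c$, the right-hand family reads $a'>b+c$, $b>a'+c$, $c>a'+b$.

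First I would dispose of the ``only if'' direction by cases on which inequality $(a,b,c)$ satisfies. If $b>a+c$ (the symmetric case $c>a+b$ is the same with $b,c$ swapped), then $b>c$, so $a'=(b^2-c^2)/a=(b-c)(b+c)/a$; using $a<b-c$ gives $a'>b+c$, which is the first inequality for the primed triple. If instead $a>b+c$, then I would show the primed triple satisfies $b>a'+c$ or $c>a'+b$. Indeed $a'=|b^2-c^2|/a<(b^2-c^2)/(b+c)=|b-c|$ when $b>c$ (and symmetrically $a'<c-b$ when $c>b$), wait — more carefully, $a>b+c$ forces $a'=|b^2-c^2|/a<|b^2-c^2|/(b+c)=|b-c|$, so if $b>c$ then $a'<b-c$, i.e. $b>a'+c$; if $c>b$ then $c>a'+b$. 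So in every case one of the primed anti-triangular inequalities holds. The ``if'' direction follows immediately because $S_x$ is an involution on $\rho$-admissible tetrahedral triangulations (applying $S_x$ to $\mathcal T'$ returns $\mathcal T$, and one checks $a''=|b^2-c^2|/a'=a$), so running the ``only if'' argument with the primed triple in the role of the unprimed one gives the converse.

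The one point needing a little care, and the step I expect to be the main (mild) obstacle, is the bookkeeping showing that exactly one of the three inequalities can hold at a time and that the transformation $a\mapsto|b^2-c^2|/a$ really does land inside the admissible region — i.e. that after $S_x$ the new triple still has $b'\neq c'$ is automatic since $b'=b$, $c'=c$ are unchanged, but I should also confirm that the hypotheses of Lemma \ref{lambda'}(1) are met, namely that $\mathcal T'$ here denotes a tetrahedral triangulation obtained by a single $S_x$ (the statement of Lemma \ref{preserve} allows any one of $S_x$, $S_y$, $S_z$, so I must note that the three cases are handled by the three parts of Lemma \ref{lambda'} respectively). No inequality among $a,b,c$ involving the fourth combination is possible since $a,b,c>0$, and at most one of $a>b+c$, $b>a+c$, $c>a+b$ can hold, so the logical statement ``one of them holds'' is unambiguous. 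Assembling these observations gives the lemma with no further computation.
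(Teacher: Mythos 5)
Your proposal is correct and follows essentially the same route as the paper: reduce by symmetry to the $S_x$ case, apply the formula of Lemma \ref{lambda'}, and do the same case analysis (the paper phrases the case $\lambda(x)>\lambda(y)+\lambda(z)$ as $\lambda'(x)<|\lambda'(y)-\lambda'(z)|$ and the other two cases as $\lambda'(x)>\lambda'(y)+\lambda'(z)$). Your explicit remark that the converse follows because the simultaneous diagonal switch acts as an involution on the quantities is exactly the symmetry the paper leaves implicit.
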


\begin{proof} Without lost of generality, we assume that $\mathcal T'
$ is obtained from $\mathcal T$ by doing $S_x.$
If $\lambda(x)>\lambda(y)+\lambda(z),$ then by Lemma \ref{lambda'},
$$\lambda'(x)=\frac{\big|\lambda(y)^2-\lambda(z)^2\big|}{\lambda(x)}<\frac{\big|\lambda(y)^2-\lambda(z)^2\big|}{\lambda(y)+\lambda(z)}=\big|\lambda(y)-\lambda(z)\big|=\big|\lambda'(y)-\lambda'(z)\big|.$$
Therefore, either $\lambda'(y)>\lambda'(x)+\lambda'(z)$ or $\lambda'(z)>\lambda'(x)+\lambda'(y).$ On the other hand, if either $\lambda(y)>\lambda(x)+\lambda(z)$ or $\lambda(z)>\lambda(x)+\lambda(y),$ i.e., $\lambda(x)<\big|\lambda(y)-\lambda(z)\big|,$ then by Lemma \ref{lambda'},
$$\lambda'(x)=\frac{\big|\lambda(y)^2-\lambda(z)^2\big|}{\lambda(x)}>\frac{\big|\lambda(y)^2-\lambda(z)^2\big|}{\big|\lambda(y)-\lambda(z)\big|}=\lambda(y)+\lambda(z)=\lambda'(y)+\lambda'(z).$$
\end{proof}

Another consequence of Lemma \ref{lambda'} is the following

\begin{proposition}\label{5.5} There are uncountably many $[\rho]\in\mathcal M_{\pm1}(\Sigma_{0,4})$ such that all the tetrahedral triangulations of $\Sigma_{0,4}$ are $\rho$-admissible. 
\end{proposition}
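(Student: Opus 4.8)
The plan is to realize the desired representations inside the coordinate chart of a fixed auxiliary tetrahedral triangulation, choosing the $\lambda$-lengths so generically that no finite sequence of simultaneous diagonal switches can ever force an inadmissibility. Fix a tetrahedral triangulation $\mathcal T_0$ of $\Sigma_{0,4}$ together with the sign function $\epsilon_0$ having $\epsilon_0(t_1)=-1$ and $\epsilon_0(t_i)=1$ for $i\neq 1$, so that by (\ref{euler}) a lengths coordinate $(\lambda,\epsilon_0)$ yields relative Euler class $1$. Let $A\subseteq\mathbb R_{>0}^3$ be the open set of triples satisfying one of the anti-triangular inequalities (\ref{anti-tri}), and set $U=\{\lambda\in\mathbb R_{>0}^E:(\lambda(x),\lambda(y),\lambda(z))\in A\}$, a non-empty open subset of $\mathbb R_{>0}^E$. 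First I would check, using Lemma \ref{5.1}, that for $\lambda\in U$ the rational function $\psi_{\epsilon_0}$ is non-zero: it is the product of $\lambda(x)+\lambda(y)+\lambda(z)$ with the three signed sums $\lambda(y)+\lambda(z)-\lambda(x)$, $\lambda(x)+\lambda(z)-\lambda(y)$, $\lambda(x)+\lambda(y)-\lambda(z)$, of which exactly one is negative and the others positive throughout $A$. By Theorem \ref{Kashaev2} and Proposition \ref{rational}, each $\lambda\in U$ then determines a type-preserving representation $\rho_\lambda$ with $e(\rho_\lambda)=1$ for which $\mathcal T_0$ is admissible.

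Next I would propagate admissibility along simultaneous diagonal switches. By the dual-Farey-tree description, every tetrahedral triangulation is joined to $\mathcal T_0$ by a unique reduced path of switches, and there are only countably many such paths. By Lemma \ref{lambda'} --- which, by the symmetry of a tetrahedral triangulation, governs the transformation of $(\lambda(x),\lambda(y),\lambda(z))$ under $S_x,S_y,S_z$ regardless of which triangle carries the negative sign --- a switch along such a path is $\rho_\lambda$-admissible exactly when the two relevant quantities among the current $\lambda(x),\lambda(y),\lambda(z)$ are unequal, and when it is admissible the new triple again lies in $A$ by Lemma \ref{preserve}. Thus for each reduced path $P$ the set $B_P\subseteq U$ of $\lambda$ for which some step of $P$ fails to be admissible is the pullback, under the submersion $\lambda\mapsto(\lambda(x),\lambda(y),\lambda(z))$, of the image under the corresponding composition of the (semialgebraic, generically finite-to-one, dominant) maps of Lemma \ref{lambda'} of the diagonal locus $\{a=b\}\cup\{b=c\}\cup\{a=c\}$; in particular $B_P$ is a semialgebraic subset of $U$ of measure zero. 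Since there are countably many $P$, the set $\mathcal G=U\setminus\bigcup_P B_P$ has full measure in $U$, and for every $\lambda\in\mathcal G$ an induction along each reduced path shows that every tetrahedral triangulation of $\Sigma_{0,4}$ is $\rho_\lambda$-admissible.

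It remains to see that $\mathcal G$ gives uncountably many distinct points of $\mathcal M_1(\Sigma_{0,4})$. By Lemma \ref{5.2}, on $U$ the function $\lambda\mapsto\big|\lambda(y)^2+\lambda(z)^2-\lambda(x)^2\big|/(\lambda(y)\lambda(z))$ equals $|tr\rho_\lambda([X])|$ and is a submersion, hence cannot be constant on the full-measure set $\mathcal G$; since $|tr\rho([X])|$ is a conjugacy invariant, $\{[\rho_\lambda]:\lambda\in\mathcal G\}$ is an uncountable subset of $\mathcal M_1(\Sigma_{0,4})$, each of whose members has every tetrahedral triangulation admissible. Replacing $\epsilon_0$ by $-\epsilon_0$ produces the corresponding uncountable family in $\mathcal M_{-1}(\Sigma_{0,4})$, which completes the proof.

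The step I expect to be the main obstacle is the claim that each $B_P$ is a proper subset of $U$, equivalently that no finite composition of the rational switch-maps of Lemma \ref{lambda'} identically collapses two of the three quantities $\lambda(x),\lambda(y),\lambda(z)$. I would prove this from the observation that, off a hyperplane, each individual switch-map is birational with a rational inverse, so the composition is dominant with finite generic fibers and hence cannot send an open set into the nowhere-dense diagonal. As an independent check, one can also reach the conclusion of the proposition by a Baire-category argument: by Kashaev's Theorem \ref{Kashaev1} the set of $[\rho]\in\mathcal M_1(\Sigma_{0,4})$ making a given tetrahedral triangulation admissible is open and dense, and $\mathcal M_1(\Sigma_{0,4})$ is a non-empty real variety of positive dimension, so a countable intersection over all tetrahedral triangulations is a dense $G_\delta$, which is uncountable.
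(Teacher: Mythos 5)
Your proposal is correct and follows essentially the same route as the paper's proof: propagate admissibility along the dual Farey tree using the switch formulas of Lemma \ref{lambda'}, observe that each tetrahedral triangulation contributes a measure-zero ``bad'' locus in the $(\lambda(x),\lambda(y),\lambda(z))$-coordinates, and take the full-measure (hence uncountable) complement over the countably many triangulations, then build the representations via Theorem \ref{Kashaev2} and Proposition \ref{rational}. Your added justifications --- that the switch maps are (piecewise) birational involutions, so no composition can collapse an open set into the diagonal locus, and the submersion argument giving uncountably many distinct conjugacy classes --- are refinements of points the paper passes over quickly, not a different method.
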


\begin{proof} Suppose $\rho$ is a typer-preserving representation of $\pi_1(\Sigma_{0,4})$ with $e(\rho)=1,$ and $d$ is a decoration of $\rho.$ Let $\mathcal T$ be a $\rho$-admissible tetrahedral triangulation of $\Sigma_{0,4}$ and let $(\lambda,\epsilon)$ be the lengths coordinate of $(\rho,d)$ in $\mathcal T.$ Recall that there is a ono-to-one correspondence between the tetrahedral triangulations of $\Sigma_{0,4}$ and the vertices of the dual Farey diagram $\mathcal F^*,$ which is a countably infinity tree. Therefore, for each tetrahedral triangulation $\mathcal T',$ there is up to redundancy a unique path in $\mathcal F^*$ connecting $\mathcal T$ and $\mathcal T',$ which corresponds to a sequence $\{S_{i}\}_{i=1}^n$ of simultaneous diagonal switches. Let $\mathcal T_0=\mathcal T,$ and for each $i\in\{1,\dots,n\},$ let $\mathcal T_{i}$ be the tetrahedral triangulation obtained from $\mathcal T_{i-1}$ by doing $S_{i}.$ Suppose $\mathcal T_{i}$ is $\rho$-admissible for some $i\in\{1,\dots,n\},$ and suppose $\lambda_{i}$ is the $\lambda$-lengths of $(\rho,d)$ in $\mathcal T_{i}.$ Then by Lemma \ref{lambda'}, $\mathcal T_{i+1}$ is $\rho$-admissible if and only if the Laurent polynomial $\frac{\lambda_{i}(y)^2-\lambda_{i}(z)^2}{\lambda_{i}(x)}\neq0.$ An induction in $i$ shows that $\mathcal T'=\mathcal T_n$ is $\rho$-admissible if and only if certain Laurent polynomial $L_{\mathcal T'}(\lambda(x),\lambda(y),\lambda(z))\neq0.$ The set of zeros $Z_{\mathcal T'}$ of $L_{\mathcal T'}$ is a Zariski-closed subset of $\mathbb R_{>0}^3.$ In particular, the Lebesgue measure $m(Z_{\mathcal T'})=0.$ Since $\mathcal F^*$ is a countably infinity tree, there are in total countably many tetrahedral triangulations $\mathcal T$ of $\Sigma_{0,4},$ and hence $m(\bigcup_{\mathcal T}Z_{\mathcal T})=0.$ Therefore, the set $\mathcal C=\mathbb R^3_{>0}\setminus\bigcup_{\mathcal T}Z_{\mathcal T}$ has a full measure in $\mathbb R_{>0}^3.$ In particular, $\mathcal C$ contains uncountable many points. 

Now each $(a,b,c)\in\mathcal C$ with $a+b\neq c,$ $a+c\neq b$ and $b+c\neq a$ determines a type-preserving representation $\rho$ as follows. Take a tetrahedral triangulation $\mathcal T$ of $\Sigma_{0,4},$ and let $E$ and $T$ respectively be the set of edges and ideal triangles of $\mathcal T.$ Choose $\epsilon\in\{\pm 1\}^T$ so that $\sum_{t\in T}\epsilon(t)=2,$ and define $\lambda\in\mathbb R_{>0}^E$ by $\lambda(e_{12})=\lambda(e_{34})=a^{\frac{1}{2}},$ $\lambda(e_{13})=\lambda(e_{24})=b^{\frac{1}{2}}$ and $\lambda(e_{14})=\lambda(e_{23})=c^{\frac{1}{2}}.$ Then $\lambda(x)=a,$ $\lambda(y)=b$ and $\lambda(z))=c.$ By Theorem \ref{Kashaev2}, Proposition \ref{rational} and Lemma \ref{5.1}, $(\lambda,\epsilon)$ determines a decorated representation $(\rho,d)$ up to conjugation. In particular, by Lemma \ref{5.1}, $\rho$ is type-preserving. By (\ref{euler}), the relative Euler class $e(\rho)=1.$ Finally, since $(\lambda(x),\lambda(y),\lambda(z))\in\mathcal C,$ the Laurent polynomial $L_{\mathcal T'}(\lambda(x),\lambda(y),\lambda(z))\neq 0$ for all tetrahedral triangulation $\mathcal T'.$ As a consequence, all the tetrahedral triangulations are $\rho$-admissible. 

By symmetry, there are also uncountably many type-preserving representations $\rho$ with $e(\rho)=-1$ such that all the tetrahedral triangulations of $\Sigma_{0,4}$ are $\rho$-admissible. 
\end{proof}

\begin{proof}[Proof of Theorem \ref{main1}] Let $\mathcal T$ be a tetrahedral triangulation of $\Sigma_{0,4}$ and let $\mathcal C$ be the full measure subset of $\mathbb R_{>0}^3$ constructed in the proof of Proposition \ref{5.5}. Then each $(a,b,c)\in\mathcal C$ satisfying one of the following identities $a>b+c,$ $b>a+c$ or $c>a+b$ determines a decorated representation $(\rho,d)$ with $e(\rho)=\pm1$ such that all the tetrahedral triangulations of $\Sigma_{0,4}$ are $\rho$-admissible. Since elementary representation have relative Euler class $0$ and $e(\rho)=\pm1,$ $\rho$ is non-elementary. For each tetrahedral triangulation $\mathcal T',$  let $\lambda'$ be the $\lambda$-lengths of $(\rho,d)$ in $\mathcal T'.$ Since $\mathcal T'
$ can by obtained from $\mathcal T$ by doing a sequence of simultaneous diagonal switches, by Lemma \ref{preserve}, the quantities $\lambda'(x),$ $\lambda'(y)$ and $\lambda'(z)$ satisfy one of the inequalities in (\ref{anti-tri}). By Lemma \ref{5.2}, the traces of the distinguished simple closed curves $X,$ $Y$ and $Z$ in $\mathcal T'
$ are strictly greater than $2$ in the absolute value. Since each simple closed curve $\gamma$ is distinguished in some tetrahedral triangulation $\mathcal T',$ we have $\big|tr\rho([\gamma])\big|>2.$
\end{proof}


\subsection{A proof of Theorem \ref{main2}}

Suppose $e(\rho)=0.$ Then by (\ref{euler}), there are exactly two ideal triangles having the positive sign and two having the negative sign. Without loss of generality, we assume that  $\epsilon(t_1)=\epsilon(t_2)=-1$ and $\epsilon(t_3)=\epsilon(t_4)=1.$ Note that under this assumption, the edges $e_{12}$ and $e_{34}$ in the pair $x$ are adjacent to ideal triangles having the same sign, and as will be seen later, the $X$-curves will play a different role than the $Y$- and $Z$- curves do. As a direct consequence of Lemma \ref{3.1}, we have the following 

\begin{lemma}\label{57} Let $\gamma_i$ be the simple closed going counterclockwise around the puncture $v_i.$ Then up to conjugation, the $\rho$-image of the peripheral elements $[\gamma_1]$ and $[\gamma_2]$ of $\pi_1(\Sigma_{0,4})$ are
\begin{equation*}
\pm\left[
      \begin{array}{cc}
      1 & \lambda(y)+\lambda(z)-\lambda(x)\\
    0&1
      \end{array} \right],
 \end{equation*}
 and the $\rho$-image of the other two peripheral elements $[\gamma_3]$ and $[\gamma_4]$ are  \begin{equation*}
      \pm\left[
      \begin{array}{cc}
      1 & \lambda(x)-\lambda(y)-\lambda(z)\\
    0&1
      \end{array} \right].
  \end{equation*}
\end{lemma}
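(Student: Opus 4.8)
The statement to prove is Lemma \ref{57}, which computes the $\rho$-images of the four peripheral elements in the relative Euler class $0$ case. The approach is entirely computational: it is a direct application of Lemma \ref{3.1} (the matrix-product formula for $\rho([\gamma])$) to the peripheral curves $\gamma_i$ in a tetrahedral triangulation $\mathcal T$, using the sign data $\epsilon(t_1)=\epsilon(t_2)=-1$, $\epsilon(t_3)=\epsilon(t_4)=1$. The plan is as follows. First, I would fix the combinatorial picture of $\mathcal T$: for each puncture $v_i$, the link of $v_i$ in $\Sigma_{0,4}$ is a circle meeting the three edges $e_{ij}$ (for $j\neq i$) and passing through the three ideal triangles $t_j$ (for $j\neq i$). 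The curve $\gamma_i$ runs around $v_i$ once, so by Lemma \ref{3.1} its image is $\pm S(e)R(t)S(e')R(t')S(e'')R(t'')$ where the edges and triangles are read off in cyclic order around $v_i$, with each turn being, say, a left turn (the curve consistently turns the same way around the puncture).

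Next I would expand the relevant $X(e)$ quantities. Recall $X(e)=\lambda(e_2)\lambda(e_4)/(\lambda(e_1)\lambda(e_3))$ where the $e_k$ are the four edges of the quadrilateral across $e$; for the peripheral curve around $v_i$, the product $S(e_{ij_1})R(t_{j_1})S(e_{ij_2})R(t_{j_2})S(e_{ij_3})R(t_{j_3})$ telescopes because the $\lambda$-lengths of the edges $e_{ij}$ appear in both numerators and denominators of successive $X$-factors, leaving only a unipotent matrix $\pm\begin{bmatrix}1 & \psi_{v_i,\epsilon}(\lambda)\\ 0 & 1\end{bmatrix}$. The upper-right entry $\psi_{v_i,\epsilon}(\lambda)$ is where the signs $\epsilon(t_j)$ enter; I would carry out the $2\times 2$ matrix multiplication, tracking the contributions $\epsilon(t_j)$ from each $R(t_j)$. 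Since the link of $v_1$ (resp. $v_2$) meets $t_2,t_3,t_4$ (resp. $t_1,t_3,t_4$), both with one triangle of sign $-1$ and two of sign $+1$, while the link of $v_3$ (resp. $v_4$) meets $t_1,t_2,t_4$ (resp. $t_1,t_2,t_3$), both with two triangles of sign $-1$ and one of sign $+1$, the computation yields $\lambda(y)+\lambda(z)-\lambda(x)$ for $v_1,v_2$ and $\lambda(x)-\lambda(y)-\lambda(z)$ for $v_3,v_4$ — consistent with the signs of the triangles around each puncture. Alternatively, and more cleanly, I would note that this is the exact analogue of Lemma \ref{5.1} but with the sign configuration changed from one $-1$ to two $-1$'s, and I can reuse the same combinatorial bookkeeping: in Lemma \ref{5.1} the entry for $\gamma_i$ is $\sum_{j\neq i}\epsilon(t_j)\cdot(\text{term})$ essentially, so flipping $\epsilon(t_2)$ from $+1$ to $-1$ propagates through predictably.

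I do not expect any real obstacle here; the only care needed is bookkeeping — getting the cyclic order of edges and triangles around each puncture right, and keeping the signs $\epsilon(t_j)$ attached to the correct $R$-matrices. The potential pitfall is an off-by-one or orientation error in reading the link of $v_i$ off Figure \ref{Fig3}, which would swap which combination goes with $\{v_1,v_2\}$ versus $\{v_3,v_4\}$; this is pinned down by the convention $\epsilon(t_1)=\epsilon(t_2)=-1$ together with the geometric fact that a puncture adjacent to more negatively-oriented ideal triangles gets the "more negative" parabolic translation length. Thus the proof is: apply Lemma \ref{3.1}, telescope the diagonal $S$-factors, multiply out the resulting product of three elementary matrices with the given signs, and read off the upper-right entries.
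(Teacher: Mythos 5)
Your proposal is correct and is essentially the paper's own route: the paper offers no separate argument, stating the lemma as a direct consequence of Lemma \ref{3.1}, which is exactly the computation you describe (multiply the $S$- and $R$-matrices around the link of each puncture; the diagonal parts cancel, and the upper-right entry is, up to a positive factor absorbed by conjugation, $\sum_{j\neq i}\epsilon(t_j)\lambda(p_{ij}),$ where $p_{ij}$ denotes the pair of opposite edges containing $e_{ij}$). One caveat for the bookkeeping: the count of negative triangles in the link alone does not decide which of $\lambda(x),$ $\lambda(y),$ $\lambda(z)$ receives the minus sign --- that is fixed by the pairing rule just stated (equivalently, by the explicit matrix product), not by the ``more negative translation length'' heuristic, and your alternative shortcut of transplanting the relative Euler class $\pm1$ computation likewise pins down the coefficients only once this pairing is in hand.
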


\begin{lemma}\label{5.8}
\begin{enumerate}[(1)]
\item The absolute values of the traces of the distinguished simple closed curves $X,$ $Y$ and $Z$ of $\mathcal T$ can be calculated by
\begin{equation}\label{5.3}
\begin{split}
\big|tr\rho([X])\big|&=\frac{\big|\lambda(x)^2+\lambda(y)^2+\lambda(z)^2-2\lambda(x)\lambda(y)-2\lambda(x)\lambda(z)\big|}{\lambda(y)\lambda(z)},\\
\big|tr\rho([Y])\big|&=\frac{\lambda(x)^2+\lambda(y)^2+\lambda(z)^2+2\lambda(y)\lambda(z)-2\lambda(x)\lambda(y)}{\lambda(x)\lambda(z)}\ \text{ and}\\
\big|tr\rho([Z])\big|&=\frac{\lambda(x)^2+\lambda(y)^2+\lambda(z)^2+2\lambda(y)\lambda(z)-2\lambda(x)\lambda(z)}{\lambda(x)\lambda(y)}.
\end{split}
\end{equation}

\item The right hand sides of the last two equations in (\ref{5.3}) are always strictly greater than $2,$ whereas the right hand side of the first equation is less than or equal to $2$ if and only if $\lambda(x),$ $\lambda(y)$ and $\lambda(z)$ satisfy the following inequalities
\begin{equation}\label{tri}
\begin{cases}
\sqrt{\lambda(x)}\leqslant\sqrt{\lambda(y)}+\sqrt{\lambda(z)},\\
\sqrt{\lambda(y)}\leqslant\sqrt{\lambda(x)}+\sqrt{\lambda(z)}\ \text{ and}\\
\sqrt{\lambda(z)}\leqslant\sqrt{\lambda(x)}+\sqrt{\lambda(y)}.
\end{cases}
\end{equation}
\end{enumerate}
\end{lemma}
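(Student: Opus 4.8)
The plan is to obtain part (1) as a direct application of the trace formula (\ref{trace}), in exactly the way Lemma \ref{5.2} was deduced in the relative Euler class $1$ case. Each of the distinguished curves meets four of the six edges transversally once and runs once through each of the four ideal triangles $t_1,\dots,t_4$: the curve $X$ crosses $e_{13},e_{14},e_{23},e_{24}$, so the product $\lambda(e_{i_1})\cdots\lambda(e_{i_4})$ in the denominator of (\ref{trace}) is $\lambda(y)\lambda(z)$; similarly the denominators for $Y$ and $Z$ are $\lambda(x)\lambda(z)$ and $\lambda(x)\lambda(y)$. I would read off from Figure \ref{Fig4} the cyclic order in which each curve visits the triangles and whether it turns left or right in each, substitute $\epsilon(t_1)=\epsilon(t_2)=-1$ and $\epsilon(t_3)=\epsilon(t_4)=1$ into the matrices $M(t)$ of Section \ref{Sec:3}, multiply the four $2\times 2$ matrices, and take the trace; the numerators displayed in (\ref{5.3}) then come out after a routine simplification, the overall sign ambiguity being absorbed into the absolute value.

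For part (2), write $a=\lambda(x)$, $b=\lambda(y)$, $c=\lambda(z)$, and treat $\big|tr\rho([X])\big|$ first. The condition $\big|tr\rho([X])\big|\leqslant 2$ unwinds to the two inequalities $a^2+b^2+c^2-2ab-2ac\leqslant 2bc$ and $a^2+b^2+c^2-2ab-2ac\geqslant -2bc$. The second is automatic, since $a^2+b^2+c^2-2ab-2ac+2bc=(a-b-c)^2\geqslant 0$. For the first, I would put $a=p^2$, $b=q^2$, $c=r^2$ with $p=\sqrt{\lambda(x)}$, $q=\sqrt{\lambda(y)}$, $r=\sqrt{\lambda(z)}$ and use the Heron-type factorization
$$a^2+b^2+c^2-2ab-2ac-2bc=-(p+q+r)(-p+q+r)(p-q+r)(p+q-r).$$
Since $p,q,r>0$, the factor $p+q+r$ is positive, and at most one of $-p+q+r$, $p-q+r$, $p+q-r$ can be negative (the pairwise sums of these three equal $2r$, $2p$, $2q$). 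Hence the product of the last three factors is $\geqslant 0$ precisely when all three are $\geqslant 0$, i.e. precisely when the inequalities (\ref{tri}) hold, which is exactly the condition $\big|tr\rho([X])\big|\leqslant 2$.

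For $\big|tr\rho([Y])\big|$ and $\big|tr\rho([Z])\big|$ I would first check that the numerators in (\ref{5.3}) are strictly positive — for instance $a^2+b^2+c^2+2bc-2ab=(a-b)^2+c^2+2bc>0$ — so the absolute values may be dropped, and then observe the identity $a^2+b^2+c^2+2bc-2ab-2ac=a^2+b^2+c^2+2bc-2ac-2ab=(a-b-c)^2\geqslant 0$, which gives $\big|tr\rho([Y])\big|\geqslant 2$ and $\big|tr\rho([Z])\big|\geqslant 2$. Equality would force $a=b+c$, i.e. $\lambda(y)+\lambda(z)-\lambda(x)=0$; but by Lemma \ref{57} this sends the peripheral element $[\gamma_1]$ to $\pm I$, contradicting that $\rho$ is type-preserving (Proposition \ref{rational}). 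Hence both traces are strictly greater than $2$.

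The only genuinely non-mechanical points are the sign analysis in part (2) — spotting the Heron factorization and the elementary fact that at most one of $-p+q+r$, $p-q+r$, $p+q-r$ can be negative — together with the observation that type-preservation rules out the degenerate locus $\lambda(x)=\lambda(y)+\lambda(z)$, which is what upgrades the weak inequalities for the $Y$- and $Z$-curves to strict ones. The bookkeeping in part (1), tracking turns and signs through the four triangles, is the most tedious step but presents no real difficulty given the trace formula.
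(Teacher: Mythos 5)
Your proposal is correct and follows essentially the same route as the paper: part (1) is the direct application of the trace formula (\ref{trace}) to the four triangles each distinguished curve crosses, and part (2) rests on the same two identities the paper uses, namely $\lambda(x)^2+\lambda(y)^2+\lambda(z)^2+2\lambda(y)\lambda(z)-2\lambda(x)\lambda(y)-2\lambda(x)\lambda(z)=(\lambda(x)-\lambda(y)-\lambda(z))^2$ together with the Heron-type factorization, plus the observation via Lemma \ref{57} and Proposition \ref{rational} that type-preservation forces $\lambda(x)\neq\lambda(y)+\lambda(z)$. Your handling of the $X$-curve merely reorganizes the paper's two-case analysis by noting the lower bound $-2\lambda(y)\lambda(z)$ is automatic, which is a cosmetic rather than substantive difference.
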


\begin{proof} (1) is a direct consequence of Theorem \ref{trace formula}. For (2), since $\rho$ is type-preserving, by Theorem \ref{Kashaev2}, Proposition \ref{rational} and Lemma \ref{57}, $\lambda(x)-\lambda(y)-\lambda(z)\neq0.$ Therefore, the right hand side of the second equation of (\ref{5.3}) equals $\frac{\big(\lambda(x)-\lambda(y)-\lambda(z)\big)^2}{\lambda(x)\lambda(z)}+2>2,$ and the right hand side of the third equation equals $\frac{\big(\lambda(x)-\lambda(y)-\lambda(z)\big)^2}{\lambda(x)\lambda(y)}+2>2.$ In the case that $\lambda(x)^2+\lambda(y)^2+\lambda(z)^2-2\lambda(x)\lambda(y)-2\lambda(x)\lambda(z)\geqslant 0,$ the right hand side of the first equation of (\ref{5.3}) equals $\frac{\big(\lambda(x)-\lambda(y)-\lambda(z)\big)^2}{\lambda(y)\lambda(z)}-2>-2.$ The quantity also equals $$2-\frac{(\lambda(x)^{\frac{1}{2}}+{\lambda(y)}^{\frac{1}{2}}+{\lambda(z)}^{\frac{1}{2}})({\lambda(x)}^{\frac{1}{2}}+{\lambda(y)}^{\frac{1}{2}}-{\lambda(z)}^{\frac{1}{2}})({\lambda(x)}^{\frac{1}{2}}+{\lambda(z)}^{\frac{1}{2}}-{\lambda(y)}^{\frac{1}{2}})({\lambda(y)}^{\frac{1}{2}}+{\lambda(z)}^{\frac{1}{2}}-{\lambda(x)}^{\frac{1}{2}})}{\lambda(y)\lambda(z)},$$
which is less than or equal to $2$ if and only if the equalities in (\ref{tri}) are satisfied. For the case that $\lambda(x)^2+\lambda(y)^2+\lambda(z)^2-2\lambda(x)\lambda(y)-2\lambda(x)\lambda(z)\leqslant 0,$ the proof is similar.
\end{proof}

The next lemma shows the rule of the change of the quantities $\lambda(x),$ $\lambda(y)$ and $\lambda(z)$ under a simultaneous diagonal switch.

\begin{lemma}\label{lambda2} Suppose $\mathcal T'$ is a tetrahedral triangulation of $\Sigma_{0,4}.$ If $\mathcal T'$ is $\rho$-admissible, then let $\lambda'$ be the $\lambda$-lengths of$(\rho,d)$ in $\mathcal T',$ and let $\lambda'(x),$ $\lambda'(y)$ and $\lambda'(z)$ be the corresponding quantities.

\begin{enumerate}[(1)]
\item If $\mathcal T'$ is obtained from $\mathcal T$ by doing $S_x,$ then $\mathcal T'$ is $\rho$-admissible. In this case, $\lambda'(y)=\lambda(y),$ $\lambda'(z)=\lambda(z)$ and 
\begin{equation*}
\lambda'(x)=\frac{\big(\lambda(y)+\lambda(z)\big)^2}{\lambda(x)}.
\end{equation*}
\item If $\mathcal T'$ is obtained from $\mathcal T$ by doing $S_y,$ then $\mathcal T'$ is $\rho$-admissible if and only if $\lambda(x)\neq\lambda(z).$ In the case that $\mathcal T'$ is $\rho$-admissible, $\lambda'(x)=\lambda(x),$ $\lambda'(z)=\lambda(z)$ and 
\begin{equation*}
\lambda'(y)=\frac{\big(\lambda(z)-\lambda(x)\big)^2}{\lambda(y)}.
\end{equation*}
\item If $\mathcal T'$ is obtained from $\mathcal T$ by doing $S_z,$ then $\mathcal T'$ is $\rho$-admissible if and only if $\lambda(x)\neq\lambda(z).$ In the case that $\mathcal T$ is $\rho$-admissible, $\lambda'(x)=\lambda(x),$ $\lambda'(y)=\lambda(y)$ and 
\begin{equation*}
\lambda'(z)=\frac{\big(\lambda(x)-\lambda(y)\big)^2}{\lambda(z)}.
\end{equation*}
\end{enumerate}
\end{lemma}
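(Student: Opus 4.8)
The plan is to repeat, almost verbatim, the argument proving Lemma \ref{lambda'}; the only new input is to track, under the present normalization $\epsilon(t_1)=\epsilon(t_2)=-1$ and $\epsilon(t_3)=\epsilon(t_4)=1$, which edges of $\mathcal T$ lie between two ideal triangles of \emph{equal} sign and which lie between two of \emph{opposite} sign. First I would record the relevant incidences in a tetrahedral triangulation: $e_{ij}$ is the common edge of $t_k$ and $t_l$ with $\{k,l\}=\{1,2,3,4\}\setminus\{i,j\}$, and the quadrilateral $t_k\cup t_l$ cut out by $e_{ij}$ has two pairs of opposite edges, whose $\lambda$-length products a direct check evaluates to $\lambda(y)$ and $\lambda(z)$ at $e_{12}$ and at $e_{34}$, to $\lambda(z)$ and $\lambda(x)$ at $e_{13}$ and at $e_{24}$, and to $\lambda(y)$ and $\lambda(x)$ at $e_{14}$ and at $e_{23}$. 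With the chosen signs, $e_{12}$ (between $t_3,t_4$) and $e_{34}$ (between $t_1,t_2$) are the two edges bounded by equal-sign triangles, whereas each of $e_{13},e_{24},e_{14},e_{23}$ is bounded by one positive and one negative triangle. This is the key structural difference from the relative Euler class $\pm1$ case of Lemma \ref{lambda'}, where the pair $x$ contained one equal-sign and one opposite-sign edge.

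Granting this, part (1) follows at once from Proposition \ref{change}(1): $S_x$ flips $e_{12}$ and $e_{34}$, both between equal-sign triangles, so $\mathcal T'$ is automatically $\rho$-admissible, the two new edges acquire $\lambda$-lengths $(\lambda(y)+\lambda(z))/\lambda(e_{12})$ and $(\lambda(y)+\lambda(z))/\lambda(e_{34})$, and since $S_x$ leaves $e_{13},e_{24},e_{14},e_{23}$ untouched we obtain $\lambda'(x)=(\lambda(y)+\lambda(z))^2/\lambda(x)$ with $\lambda'(y)=\lambda(y)$ and $\lambda'(z)=\lambda(z)$. For parts (2) and (3) I would instead invoke Proposition \ref{change}(2): $S_y$ flips the opposite-sign edges $e_{13}$ and $e_{24}$, for each of which the non-degeneracy condition of Proposition \ref{change}(2) reads $\lambda(z)\ne\lambda(x)$, giving the admissibility criterion; the new $\lambda$-lengths are $|\lambda(z)-\lambda(x)|/\lambda(e_{13})$ and $|\lambda(z)-\lambda(x)|/\lambda(e_{24})$, with product $\lambda'(y)=(\lambda(z)-\lambda(x))^2/\lambda(y)$ and $\lambda'(x),\lambda'(z)$ unchanged. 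Part (3) is identical after interchanging the roles of $y$ and $z$: $S_z$ flips $e_{14}$ and $e_{23}$, the non-degeneracy condition becomes $\lambda(x)\ne\lambda(y)$, and $\lambda'(z)=(\lambda(x)-\lambda(y))^2/\lambda(z)$. (This also fixes the evident misprint in the statement of (3): the admissibility condition is $\lambda(x)\ne\lambda(y)$, as the displayed formula for $\lambda'(z)$ requires.)

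There is no real obstacle here, and the only point needing care is the bookkeeping of which new edge joins which pair of punctures. A simultaneous diagonal switch at $x=\{e_{12},e_{34}\}$ turns the flip of $e_{12}$ into an arc joining $v_3$ and $v_4$ and the flip of $e_{34}$ into an arc joining $v_1$ and $v_2$, so in $\mathcal T'$ the edge we name $e_{12}'$ is the flip of $e_{34}$ and vice versa; this interchange redistributes the individual $\lambda$-lengths and the denominators $\lambda(e_{12})$ versus $\lambda(e_{34})$ but leaves the product $\lambda'(x)=\lambda'(e_{12}')\lambda'(e_{34}')$, hence the statement of the lemma, unchanged, and likewise for $S_y$ and $S_z$. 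Since exactly this remark already underlies the proof of Lemma \ref{lambda'}, I would refer to that proof and carry out the three routine case checks above.
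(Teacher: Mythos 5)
Your argument is correct and is essentially the paper's own proof: the paper disposes of this lemma by citing Proposition \ref{change} and the proof of Lemma \ref{lambda'}, and your case check (under $\epsilon(t_1)=\epsilon(t_2)=-1,$ $\epsilon(t_3)=\epsilon(t_4)=1,$ both edges of the pair $x$ lie between equal-sign triangles while each edge of the pairs $y$ and $z$ lies between opposite-sign triangles, so Proposition \ref{change} (1) applies to $S_x$ and Proposition \ref{change} (2) to $S_y$ and $S_z$) is exactly the computation that citation stands for. Your remark that the admissibility condition in part (3) should read $\lambda(x)\neq\lambda(y)$ is also correct; that is a misprint in the statement.
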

\begin{proof} This is a consequence of Proposition \ref{change}, and the proof is similar to that of Lemma \ref{lambda'}. \end{proof}

\begin{proof}[Proof of Theorem \ref{main2}] Let $\rho$ be a type-preserving representation of $\pi_1(\Sigma_{0,4})$ with relative Euler class $e(\rho)=0,$ and choose arbitrarily a decoration $d$ of $\rho.$ Let $\mathcal T$ be a tetrahedral triangulation of $\Sigma_{0,4}.$ If $\mathcal T$ is not $\rho$-admissible, then there is an edge $e$ of $\mathcal T$ that is not $\rho$-admissible, and the element of $\pi_1(\Sigma_{0,4})$ represented by the distinguished simple closed curve in $\mathcal T$ disjoint from $e$ is sent by $\rho$ to a parabolic element of $PSL(2,\mathbb R).$ If $\mathcal T$ is $\rho$-admissible, then we let $(\lambda,\epsilon)$ be the lengths coordinate of $(\rho, d)$ in $\mathcal T.$ If the quantities $\lambda(x),$ $\lambda(y)$ and $\lambda(z)$ satisfy the inequalities in (\ref{tri}), then by Lemma \ref{5.8}, the element of $\pi_1(\Sigma_{0,4})$ represented by one of the distinguished simple closed curves $X,$ $Y$ and $Z$ is sent by $\rho$ to either an elliptic or a parabolic element of $PSL(2,\mathbb R).$ Therefore, to prove the theorem, it suffices to find a tetrahedral triangulation $\mathcal T'$ of $\Sigma_{0,4}$ such that either $\mathcal T'$ is not $\rho$-admissible or $\mathcal T'$ is $\rho$-admissible with the quantities $\lambda'(x),$ $\lambda'(y)$ and $\lambda'(z)$ satisfying the inequalities in (\ref{tri}). Our strategy of finding $\mathcal T'$ is to construct a sequence of tetrahedral triangulations $\{\mathcal T_n\}_{n=1}^N$ with $\mathcal T_N=\mathcal T'$ by the following 
\medskip

{\it Trace Reduction Algorithm:} Let $\mathcal T_0=\mathcal T$ and suppose that $\mathcal T_n$ is obtained. If $\mathcal T_n$ is not $\rho$-admissible, then we stop. If $\mathcal T_n$ is $\rho$-admissible, then we let $(\lambda_n,\epsilon_n)$ be the lengths coordinate of $(\rho,d)$ in $\mathcal T_n.$ If $\lambda_n(x),$ $\lambda_n(y)$ and $\lambda_n(z)$ satisfy the inequalities in (\ref{tri}), then we stop. If otherwise, then there is a unique maximum among $\lambda_n(x),$ $\lambda_n(y)$ and $\lambda_n(z),$ since other wise the inequalities (\ref{tri}) are satisfied. Suppose $\{e_{ij},e_{kl}\}$ is the pair of opposite edges of $\mathcal T_n$ such that $\lambda(e_{ij})\lambda(e_{kl})$ equals the maximum of $\lambda_n(x),$ $\lambda_n(y)$ and $\lambda_n(z).$ Then we let $\mathcal T_{n+1}$ be the tetrahedral triangulation obtained from $\mathcal T_n$ by doing a simultaneous diagonal switch at $e_{ij}$ and $e_{kl}.$ 
\medskip

By Lemma \ref{finite} below, the algorithm stops  at some $\mathcal T_N.$ 
\end{proof}

\begin{lemma}\label{finite} The Trace Reduction Algorithm stops in finitely many steps. 
\end{lemma}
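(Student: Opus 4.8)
The plan is to recast the algorithm as an explicit dynamical system on the triple $\big(\lambda_n(x),\lambda_n(y),\lambda_n(z)\big)$ and show it cannot run forever. Put $a_n=\sqrt{\lambda_n(x)}$, $b_n=\sqrt{\lambda_n(y)}$, $c_n=\sqrt{\lambda_n(z)}$; by Lemma \ref{5.8}(2) the algorithm halts exactly when $a_n,b_n,c_n$ satisfy the triangle inequalities, and otherwise the unique violated inequality in (\ref{tri}) is precisely the one at the pair realizing the maximum, which is the pair that gets switched.

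First I would record the rigidity that makes the descent work. As noted just before Lemma \ref{57} (using Theorem \ref{main3}), in every $\rho$-admissible tetrahedral triangulation the pair $x$ is the ``special'' one, so $\lambda_n(x)-\lambda_n(y)-\lambda_n(z)\neq 0$ for all $n$. Expressing the (nonzero) displacement of the horocycle at each puncture by the corresponding parabolic in the length coordinates, via Penner-style horocyclic-segment length formulas, and multiplying the four resulting identities, one gets a fixed constant $C>0$ with
\[
\big(\lambda_n(x)-\lambda_n(y)-\lambda_n(z)\big)^2 \;=\; C^2\,\lambda_n(x)\,\lambda_n(y)\,\lambda_n(z)\qquad\text{for every }n .
\]
Substituting this into the change-of-length rules of Lemma \ref{lambda2}, the switches become the subtractive moves $a_n\mapsto a_n-C\,b_nc_n$ (performed when $a_n>b_n+c_n$), $b_n\mapsto|C\,a_nc_n-b_n|$ (when $b_n>a_n+c_n$), $c_n\mapsto|C\,a_nb_n-c_n|$ (when $c_n>a_n+b_n$), each move leaving the other two coordinates unchanged; in particular all triples $(a_n,b_n,c_n)$ lie on the fixed surface $\{\,|a^2-b^2-c^2|=C^2abc\,\}\subset\mathbb R_{>0}^3$.

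Then I would carry out the descent. At each step the maximum of $a_n,b_n,c_n$ is the switched coordinate and is replaced by a strictly smaller positive number (immediate for $S_x$; for $S_y,S_z$ one uses the violated inequality), so $M_n:=\max(a_n,b_n,c_n)$ strictly decreases; one also checks that $a_n+b_n+c_n$ strictly decreases and that the violated inequality of (\ref{tri}) is restored after the switch, so $\{\mathcal T_n\}$ traces a non-backtracking ray in the dual Farey tree $\mathcal F^*$. If the algorithm ran forever, $a_n+b_n+c_n$ would converge to some $L\geq 0$. The value $L=0$ is impossible: as $a_n+b_n+c_n\to 0$ the identity forces $|a_n^2-b_n^2-c_n^2|$ to be negligible compared with $M_n^2$, i.e.\ the shapes approach a right-angled triangle, for which (\ref{tri}) holds and the algorithm would already have halted. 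The case $L>0$ is excluded by a finer version of the same idea: an infinite non-backtracking ray turns in all three Farey directions infinitely often, so each coordinate is switched infinitely often; since the total decrement $\sum\big((a_n+b_n+c_n)-(a_{n+1}+b_{n+1}+c_{n+1})\big)$ is finite, the products $b_nc_n,\ a_nc_n,\ a_nb_n$ driving the moves tend to $0$, which on the fixed surface again makes $|a_n^2-b_n^2-c_n^2|$ negligible against $M_n^2$ and hence forces (\ref{tri}) — a contradiction. Therefore the algorithm halts in finitely many steps. The crux is exactly this last estimate: the monovariants $M_n$ and $a_n+b_n+c_n$ decrease but with decrements that are not bounded below, so one genuinely needs the rigidity identity confining the dynamics to a fixed surface, not merely Lemma \ref{lambda2}, to rule out an infinite descent.
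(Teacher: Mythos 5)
Your reformulation is largely sound, and it is genuinely different from the paper's argument: the quantity $K_n=\big(\lambda_n(x)-\lambda_n(y)-\lambda_n(z)\big)^2/\big(\lambda_n(x)\lambda_n(y)\lambda_n(z)\big)$ really is constant along the algorithm, but you should prove this by a direct two-line computation from Lemma \ref{lambda2} for each of $S_x,S_y,S_z$ (the decoration is fixed throughout, so this suffices), rather than through signed horocycle-displacement formulas, which the paper never establishes for non-Fuchsian decorated representations; also your surface should read $|a^2-b^2-c^2|=Cabc$ rather than $C^2abc$ to match your moves. Granting the invariant, the subtractive form of the three moves, the strict decrease of $M_n$ and of $a_n+b_n+c_n$, the restoration of the violated inequality, and the non-backtracking of the path in $\mathcal F^*$ are all correct; this is a different setup from the paper, which normalizes to the simplex and runs a three-step argument (monotonicity of $k_n$, recurrence of the $S_x$-type step, and a region analysis forcing the normalized minimum to increase).

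The proof breaks, however, exactly at the step you call the crux. The implication used in both the $L=0$ and $L>0$ cases --- that $|a_n^2-b_n^2-c_n^2|$ being negligible against $M_n^2$ forces (\ref{tri}) --- is false. On your surface, if $a$ is the maximum and $a^2\geqslant b^2+c^2$, then $(b+c)^2-a^2=bc\,(2-Ca)$, so $a>b+c$ holds precisely when $Ca>2$; taking, say, $C=1$, $a=3$, $c=\varepsilon$ and $b$ the positive root of $b^2+3\varepsilon b+\varepsilon^2-9=0$ gives points on the surface that violate (\ref{tri}) while $|a^2-b^2-c^2|=Cabc\approx 9\varepsilon$ is arbitrarily small compared with $a^2$. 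Ruling out this needle regime (one side tiny, the other two nearly equal) is precisely the hard part, and it is what Step III of the paper's proof accomplishes. In addition, two auxiliary claims in your $L>0$ case are unjustified: a non-backtracking ray in the trivalent tree $\mathcal F^*$ need not use all three directions infinitely often (it can alternate two of them forever), and proving that a particular direction recurs infinitely often is the content of the paper's Step II; and ``finite total decrement implies the driving products tend to $0$'' fails for the $S_y,S_z$ moves in the overshoot case $Ca_nc_n>b_n$, where the decrement equals $2b_n-Ca_nc_n$ and can be tiny while $Ca_nc_n$ stays large. (Your $L=0$ case can actually be repaired, since on the surface any violation forces the maximum to exceed $2/C$, so $M_n>2/C$ as long as the algorithm runs; but that observation does not touch the $L>0$ case, which carries the real content.) As written, the infinite descent is therefore not excluded; the invariant-surface picture is a promising alternative starting point, but the recurrence and non-degeneration arguments supplied by the paper's Steps II and III still have to be provided.
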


\begin{proof} For each $n,$ let $t_i$ be the ideal triangle of $\mathcal T_n$ disjoint from the puncture $v_i,$ and let $e_{ij}$ be the edge of $\mathcal T_n$ connecting the punctures $v_i$ and $v_j.$  Without loss of generality, we assume in $\mathcal T$ that $\epsilon(t_1)=\epsilon(t_2)=-1$ and $\epsilon(t_3)=\epsilon(t_4)=1.$ Then by Proposition \ref{change}, $\epsilon_n(t_1)=\epsilon_n(t_2)$ and $\epsilon_n(t_3)=\epsilon_n(t_4)$ for each $\mathcal T_n.$ For each $n,$ we let $a_n=\frac{\lambda_n(x)}{\lambda_n(x)+\lambda_n(y)+\lambda(z)},$ $b_n=\frac{\lambda_n(y)}{\lambda_n(x)+\lambda_n(y)+\lambda_n(z)}$ and $c_n=\frac{\lambda_n(z)}{\lambda_n(x)+\lambda_n(y)+\lambda_n(z)},$ and let $$k_n=\max\big\{\sqrt{a_n}-\sqrt{b_n}-\sqrt{c_n},\
\sqrt{b_n}-\sqrt{a_n}-\sqrt{c_n},\ 
\sqrt{c_n}-\sqrt{a_n}-\sqrt{b_n}\big\}.$$ Then $\lambda_n(x),$ $\lambda_n(y)$ and $\lambda_n(z)$ satisfy the inequalities in (\ref{tri}) if and only if $k_n\leqslant 0.$

Assume that the sequence $\{\mathcal T_n\}$ is infinite, i.e., $k_n>0$ for all $n>0.$ Then we will find a contradiction by the following three steps. In Step I we show that $k_n$ is decreasing in $n$ by considering two mutually complementary cases, where in one of them (Case 1) the gap $k_n-k_{n+1}$ is bounded below by the minimum of $a_n, b_n$ and $c_n.$ In Step II we show that there must be a infinite subsequence $\{\mathcal T_{n_i}\}$ of $\{\mathcal T_n\}$ such that each $\mathcal T_{n_i}$ is of Case 1 of Step I, and in Step III we show that for $i$ large enough, $\min\{a_{n_i}, b_{n_i}, c_{n_i}\}$ is increasing. The three steps together imply that $k_n<0$  for some $n$ large enough, which is a contradiction.
\medskip

Step I: We show that $k_n$ is decreasing in $n.$ There are the following two cases to verify. 

Case 1: $\sqrt{a_n}-\sqrt{b_n}-\sqrt{c_n}>0.$ In this case, by Lemma \ref{lambda2}, 
\begin{equation}\label{5.5}
\big(a_{n+1},b_{n+1},c_{n+1}\big)=\Big(b_n+c_n,\frac{a_nb_n}{b_n+c_n},\frac{a_nc_n}{b_n+c_n}\Big).
\end{equation}
Without loss of generality, we assume that $b_n>c_n.$ Then $b_{n+1}$ is the largest among $a_{n+1},$ $b_{n+1}$ and $c_{n+1}.$ By a direct calculation and that $\sqrt{a_n}>\sqrt{b_n}+\sqrt{c_n},$ we have
\begin{equation*}
\begin{split}
k_n-k_{n+1}&=\big(\sqrt{a_n}-\sqrt{b_n}-\sqrt{c_n}\big)-\big(\sqrt{b_{n+1}}-\sqrt{a_{n+1}}-\sqrt{c_{n+1}} \big)
>\frac{2c_n}{\sqrt{b_n+c_n}}>0.
\end{split}
\end{equation*}
Moreover, since $a_n+b_n+c_n=1$ and $a_n>0,$  we have $\sqrt{b_n+c_n}<1,$ and hence $k_n-k_{n+1}>2c_n.$ Therefore, we have
\begin{equation}\label{est}
k_n-k_{n+1}>2\min\{a_n,b_n,c_n\}.
\end{equation}   

Case 2: One of $\sqrt{b_n}-\sqrt{a_n}-\sqrt{c_n}$ and
$\sqrt{c_n}-\sqrt{a_n}-\sqrt{b_n}$ is strictly greater than $0.$ In this case, we without loss of generality assume that $\sqrt{b_n}-\sqrt{a_n}-\sqrt{c_n}>0.$  Then by Lemma \ref{lambda2},
\begin{equation}\label{5.7}
\begin{split}
\big(a_{n+1},&b_{n+1},c_{n+1}\big)=\\
&\bigg(\frac{a_nb_n}{a_nb_n+b_nc_n+(a_n-c_n)^2},\frac{(a_n-c_n)^2}{a_nb_n+b_nc_n+(a_n-c_n)^2},\frac{b_nc_n}{a_nb_n+b_nc_n+(a_n-c_n)^2}\bigg). 
\end{split}
\end{equation}
Without loss of generality, we assume that $a_n>c_n.$ Then $a_{n+1}$ is the largest among $a_{n+1},$ $b_{n+1}$ and $c_{n+1}.$ By a direct calculation and that $b_n=1-a_n-c_n,$ we have
\begin{equation*}
\begin{split}
\frac{k_{n+1}}{k_n}&=\frac{\sqrt{a_{n+1}}-\sqrt{b_{n+1}}-\sqrt{c_{n+1}}}{\sqrt{b_{n}}-\sqrt{a_{n}}-\sqrt{c_{n}}}=\sqrt{\frac{a_n+c_n-2\sqrt{a_nc_n}}{a_n+c_n-4a_nc_n}}.
\end{split}
\end{equation*}
From $\sqrt{b_n}>\sqrt{a_n}+\sqrt{c_n}$ and $a_n+b_n+c_n=1,$ we have  $a_n<\frac{1}{2},$ $c_n<\frac{1}{2},$ and hence $2\sqrt{a_nc_n}>4a_nc_n.$ As a consequence, ${k_{n+1}}/{k_n}<1.$
\medskip

Step II: We show that there is an infinite subsequence $\{\mathcal T_{n_i}\}$ of $\{\mathcal T_n\}$ such that $(a_{n_i},b_{n_i},c_{n_i})$ is in Case 1 of Step I. We use contradiction. For each $(a_n,b_n,c_n)$ in Case 2 of Step I, let $A_n=\max\{\lambda_n(y),\lambda_n(z)\}$ and let  $B_n=\min\{\lambda_n(y),\lambda_n(z)\}.$ Then $\sqrt{A_n}>\sqrt{B_n}+\sqrt{\lambda_n(x)}.$ By Lemma \ref{lambda2}, $(a_{n+1},b_{n+1},c_{n+1})$ is in Case 1 of Step I if and only if $\lambda_n(x)>B_n.$ Now suppose that there is an $m\in\mathbb N$ such that $(a_m,b_m,c_m)$ is in Case 2 of Step I and $B_n>\lambda_n(x)$ for all $n\geqslant m.$ Then by Lemma \ref{lambda2}, we have $\lambda_{n+1}(x)=\lambda_n(x)$ and 
$$\sqrt{B_{n+1}}=\frac{B_n-\lambda_n(x)}{\sqrt{A_n}}<\frac{B_n-\lambda_n(x)}{\sqrt{B_n}+\sqrt{\lambda_n(x)}}=\sqrt{B_n}-\sqrt{\lambda_n(x)}$$
 for $n\geqslant m.$ By induction, $\lambda_{n}(x)=\lambda_m(x)$ and $\sqrt{B_{n}}<\sqrt{B_m}-(n-m)\sqrt{\lambda_m(x)}$ for all $n>m,$ which is impossible.
\medskip

Step III: We show that for $i$ large enough, $\min\{a_{n_i}, b_{n_i}, c_{n_i}\}$ is increasing. In Figure \ref{Fig9} below, we let $\Delta=\big\{(a,b,c)\in\mathbb R_{>0}^3\ |\ a+b+c=1\big\},$ and for each $k$ let $C_k$ be the intersection of $\Delta$ with the set $\big\{(a,b,c)\in\mathbb R_{>0}^3\ |\ \max\{\sqrt{a}-\sqrt{b}-\sqrt{c},
\sqrt{b}-\sqrt{a}-\sqrt{c},
\sqrt{c}-\sqrt{a}-\sqrt{b}\}=k\big\}.$ A direct calculation shows that $C_k$'s are parts of the concentric circles centered at $(\frac{1}{3},\frac{1}{3},\frac{1}{3})$ with radii increasing in $k,$ and that $C_0$ is the inscribed circle of $\Delta.$ In Figure \ref{Fig9} (a), let $Q$ be the intersection of $\Delta$ and the set $\big\{(a,b,c)\in\mathbb R_{>0}^3\ |\ (b+c)^2=ac\big\}.$ Then $Q$ is a quadratic curve in $\Delta$ going through the points $(1,0,0)$ and $(\frac{1}{2},0,\frac{1}{2}).$  Let the line segment $P$ be the intersection of $\Delta$ and the plane $\big\{(a,b,c)\in\mathbb R^3\ |\ a=c\big\}.$ Then by (\ref{5.5}), if $(a_n,b_n,c_n)$ is on $Q$ with $b_n>c_n,$ then $(a_{n+1},b_{n+1},c_{n+1})$ is on $P.$ Denote by $H$ the line segment connecting $(\frac{1}{2},\frac{1}{2},0)$ and $(\frac{1}{2},0,\frac{1}{2}),$ and by $L$ the line segment connecting $(1,0,0)$ and $(0,1,0).$ Let $D$ be the region of $\Delta$ bounded by $Q,$ $H$ and $L,$ and let $E$ be the region in $\Delta$ bounded by $P,$ $H$ and $L.$  In Figure \ref{Fig9} (b), let $p$ be the intersection of $Q$ and $C_{k_0},$ let $\epsilon$ be the third coordinate of $p,$ let $L_{\epsilon}$ be the intersection of $\Delta$ and the plane $\{(a,b,c)\in\mathbb R^3\ |\ c=\epsilon\},$ and let $F$ be the region in $\Delta$ bounded by $C_{k_0},$ $L_{\epsilon},$ $H$ and $L.$ Note that $F$ is a subset of $D.$
 \begin{figure}[htbp]
\centering
\includegraphics[scale=0.45]{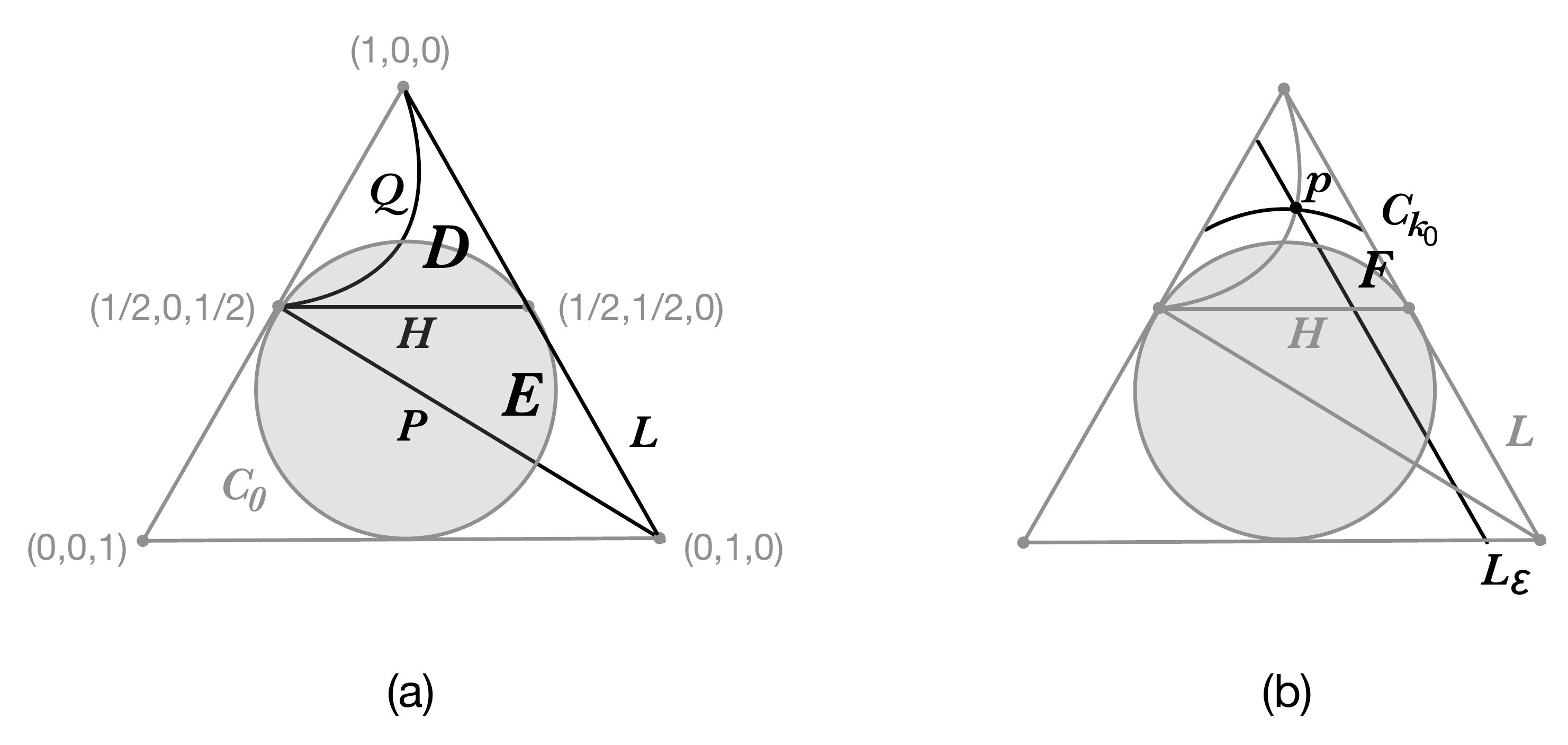}
\caption{}
\label{Fig9}
\end{figure}

Now consider the infinite subsequence $\{\mathcal T_{n_i}\}$ guaranteed by Step II such that $(a_{n_i},b_{n_i},c_{n_i})$ is in Case 1 of Step I. By (\ref{est}), there exists an $i_0$ such that $\min\{b_{n_i},c_{n_i}\}<\epsilon$ for all $i>i_0,$ since otherwise $k_{n_i+1}<0$ for $i$ large enough, and the algorithm stops. Without loss of generality, we assume that $b_{n_{i_0}}>c_{n_{i_0}},$ and we have the following two claims. 
\smallskip

Claim 1: If $(a_n,b_n,c_n)\in D$ and $b_n>c_n,$ then $(a_{n+1},b_{n+1},c_{n+1})\in E,$ $b_{n+1}>c_{n+1}$ and \begin{equation*}\label{5.8}
\min\{a_{n+1},b_{n+1},c_{n+1}\}>\min\{a_n,b_n,c_n\}.
\end{equation*}
Indeed, in this case, $c_n=\min\{a_n,b_n,c_n\}.$ By (\ref{5.5}), $(a_{n+1},b_{n+1},c_{n+1})\in E$ and $c_{n+1}>c_n.$ Furthermore, we have $b_{n+1}>c_{n+1},$ since otherwise $(a_{n+1},b_{n+1},c_{n+1})$ would be in the disk  bounded by the circle $C_0,$ i.e., $k_{n+1}<0$ and the algorithm stops. Therefore, $c_{n+1}=\min\{a_{n+1},b_{n+1},c_{n+1}\}$ and $\min\{a_{n+1},b_{n+1},c_{n+1}\}>\min\{a_n,b_n,c_n\}.$
\smallskip

Claim 2: For $n>n_0,$ if $(a_n,b_n,c_n)\in E$ and $b_n>c_n,$ then $(a_{n+1},b_{n+1},c_{n+1})\in D,$ $b_{n+1}>c_{n+1}$ and 
\begin{equation*}\label{5.9}
\min\{a_{n+1},b_{n+1},c_{n+1}\}>\min\{a_n,b_n,c_n\}.
\end{equation*}
Indeed, in this case, $c_n=\min\{a_n,b_n,c_n\}.$ By (\ref{5.7}), $(a_{n+1},b_{n+1},c_{n+1})$ is in the triangle above $H,$ $b_{n+1}>c_{n+1}$ and $c_{n+1}>c_n.$ Therefore, $c_{n+1}=\min\{a_{n+1},b_{n+1},c_{n+1}\}$ and
$\min\{a_{n+1},b_{n+1},c_{n+1}\}>\min\{a_n,b_n,c_n\}.$
Furthermore, since  $n>n_{i_0},$ we have $c_{n+1}<\epsilon,$ and by Step I, we have $k_{n+1}<k_0.$ As a consequence,  $(a_{n+1},b_{n+1},c_{n+1})\in F\subset D.$ Since the intersection $(\frac{2}{3}, \frac{1}{6}, \frac{1}{6})$ of the quadratic curve $Q$ and the circle $C_0$ lies on the line determined by $b=c,$ $F$ lies on the right half of $\Delta,$ and hence $b_{n+1}>c_{n+1}.$
\smallskip

Since $k_{n_{i_0}}<k_0$ and by assumption $b_{n_{i_0}}>c_{n_{i_0}}$ and $c_{n_{i_0}}<\epsilon,$  we have $(a_{n_{i_0}},b_{n_{i_0}},c_{n_{i_0}})\in F\subset D.$ 
By an induction and Claims 1 and 2, we have for all $m\geqslant 0$ that $(a_{n_{i_0}+2m},b_{n_{i_0}+2m},c_{n_{i_0}+2m})\in  D$ with $b_{n_{i_0}+2m}>c_{n_{i_0}+2m}$ and $(a_{n_{i_0}+2m+1},b_{n_{i_0}+2m+1},c_{n_{i_0}+2m+1})\in E$ with $b_{n_{i_0}+2m+1}>c_{n_{i_0}+2m+1},$  and hence for $n>n_{i_0}$ have $\min\{a_{n+1},b_{n+1},c_{n+1}\}>\min\{a_{n},b_{n},c_{n}\}.$   \end{proof}

Similar to the relative Euler class $\pm 1$ case, we have

\begin{proposition}\label{5.11} There are uncountably many $[\rho]\in\mathcal M_0(\Sigma_{0,4})$ such that all the tetrahedral triangulations of $\Sigma_{0,4}$ are $\rho$-admissible. For each such $\rho,$ there is a simple closed curve $\gamma$ on $\Sigma_{0,4}$ such that $\rho([\gamma])$ is an elliptic element in $PSL(2,\mathbb R).$
\end{proposition}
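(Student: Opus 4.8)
\emph{Proof strategy.} I would prove the first assertion by repeating the proof of Proposition~\ref{5.5} with Lemma~\ref{lambda2} in place of Lemma~\ref{lambda'}. Fix a tetrahedral triangulation $\mathcal T$, with edge set $E$ and triangle set $T$, and an $\epsilon\in\{\pm1\}^T$ with $\epsilon(t_1)=\epsilon(t_2)=-1$ and $\epsilon(t_3)=\epsilon(t_4)=1$, so that $\sum_{t\in T}\epsilon(t)=0$. Since the formulas in Lemma~\ref{lambda2} express $\lambda'(x),\lambda'(y),\lambda'(z)$ in terms of $\lambda(x),\lambda(y),\lambda(z)$ alone, following the essentially unique path in the dual Farey diagram $\mathcal F^*$ from $\mathcal T$ to an arbitrary tetrahedral triangulation $\mathcal T'$ and inducting as in Proposition~\ref{5.5} shows that $\mathcal T'$ is $\rho$-admissible precisely when a nonzero Laurent polynomial $L_{\mathcal T'}(\lambda(x),\lambda(y),\lambda(z))$ does not vanish; the union over the countably many $\mathcal T'$ of the zero sets $Z_{\mathcal T'}$ has measure zero, so $\mathcal C:=\mathbb R^3_{>0}\setminus\bigcup_{\mathcal T'}Z_{\mathcal T'}$ has full measure. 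For each $(a,b,c)\in\mathcal C$ with $a\neq b+c$, setting $\lambda(e_{12})=\lambda(e_{34})=a^{1/2}$, $\lambda(e_{13})=\lambda(e_{24})=b^{1/2}$ and $\lambda(e_{14})=\lambda(e_{23})=c^{1/2}$ and using Theorem~\ref{Kashaev2}, Proposition~\ref{rational} and Lemma~\ref{57}, one obtains, up to conjugation, a type-preserving $\rho$ (type-preservation being exactly the condition $a\neq b+c$) with $e(\rho)=0$ by (\ref{euler}), all of whose tetrahedral triangulations are $\rho$-admissible, and with $\lambda(x)=a$, $\lambda(y)=b$, $\lambda(z)=c$. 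Since $\{a=b+c\}$ has measure zero, there are uncountably many such $\rho$.

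For the second assertion, take any such $\rho$ and a decoration $d$, and run the Trace Reduction Algorithm of the proof of Theorem~\ref{main2} starting from any tetrahedral triangulation. As every tetrahedral triangulation is $\rho$-admissible, the algorithm cannot stop for lack of admissibility, so by Lemma~\ref{finite} it stops at a $\mathcal T_N$ whose coordinates satisfy (\ref{tri}). By Proposition~\ref{change} a simultaneous diagonal switch preserves the relations $\epsilon(t_1)=\epsilon(t_2)$ and $\epsilon(t_3)=\epsilon(t_4)$, so the pair $x$ is again flanked by equally-signed ideal triangles in $\mathcal T_N$ and Lemma~\ref{5.8} applies to its $X$-curve $X_N$. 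If the three inequalities in (\ref{tri}) are all strict, then, combining the factored expression in the proof of Lemma~\ref{5.8} with $\lambda_N(x)\neq\lambda_N(y)+\lambda_N(z)$ (true since $\rho$ is type-preserving), one gets $|tr\,\rho([X_N])|<2$, so $\rho([X_N])$ is elliptic and we are done.

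It then remains to treat the degenerate case in which one inequality in (\ref{tri}) is an equality, say, after relabelling, $\sqrt{\lambda_N(u)}=\sqrt{\lambda_N(v)}+\sqrt{\lambda_N(w)}$, with $\lambda_N(u)$ the unique maximum. Here I would keep performing simultaneous diagonal switches at the current maximal pair. Using Lemma~\ref{lambda2} one checks: the switch $S_x$ carried out when $\sqrt{\lambda(x)}=\sqrt{\lambda(y)}+\sqrt{\lambda(z)}$ produces $|\sqrt{\lambda(y)}-\sqrt{\lambda(z)}|<\sqrt{\lambda'(x)}<\sqrt{\lambda(y)}+\sqrt{\lambda(z)}$, i.e. the \emph{strict} triangle inequalities on the square roots; whereas the switch $S_y$ (resp. $S_z$) carried out at the analogous degenerate configuration replaces the maximal square root by $|\sqrt{\lambda(x)}-\sqrt{\lambda(z)}|$ (resp. $|\sqrt{\lambda(x)}-\sqrt{\lambda(y)}|$) and leaves $\sqrt{\lambda(x)}$ and the remaining square root unchanged. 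Thus, as long as the maximum stays at $y$ or $z$, the quantity $\sqrt{\lambda(x)}$ is fixed while the other two square roots undergo the subtractive Euclidean reduction, so after finitely many steps one of them falls below $\sqrt{\lambda(x)}$; the maximum then moves to $x$, and a final $S_x$ yields the strict triangle inequalities. All these triangulations are $\rho$-admissible by hypothesis, the pair $x$ stays flanked by equally-signed triangles, and type-preservation again gives $\lambda(x)\neq\lambda(y)+\lambda(z)$; hence by Lemma~\ref{5.8} the $X$-curve of the final triangulation has $|tr\,\rho|<2$ and its image is elliptic.

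I expect this last, degenerate, case to be the delicate point: one must confirm that repeated switching at the maximal pair terminates at a \emph{strict} solution of (\ref{tri})---equivalently, that the algorithm's orbit is transverse to the boundary circle $C_0$ of the triangle-inequality region in the simplex of Lemma~\ref{finite} and cannot remain with the excess coordinate at $y$ or $z$ indefinitely. A tidier alternative would be to fold this into Lemma~\ref{finite} itself, redefining the Trace Reduction Algorithm to halt only once (\ref{tri}) holds strictly and rerunning the $k_n$-monotonicity estimates there with $C_0$, rather than the open disc it bounds, as the target set.
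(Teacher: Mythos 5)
Your proposal is correct and follows essentially the same route as the paper, which likewise obtains the first assertion by rerunning the argument of Proposition \ref{5.5} with Lemma \ref{lambda2} in place of Lemma \ref{lambda'}, and the second assertion from the Trace Reduction Algorithm. The degenerate case you flag --- the algorithm halting on the boundary of (\ref{tri}), where the distinguished curve is only parabolic --- is passed over in the paper's brief proof, and your subtractive reduction is a correct way to close it: the switches $S_y$ and $S_z$ performed there replace the excess square root by $\big|\sqrt{\lambda(x)}-\sqrt{\lambda(z)}\big|$ or $\big|\sqrt{\lambda(x)}-\sqrt{\lambda(y)}\big|$ while fixing $\sqrt{\lambda(x)},$ so the process strictly decreases the remaining root by the fixed amount $\sqrt{\lambda(x)}$ at each step and must terminate, the full-admissibility hypothesis excludes the equalities $\lambda(x)=\lambda(y)$ and $\lambda(x)=\lambda(z)$ that would stall it, and type-preservation (Lemma \ref{57} with Proposition \ref{rational}) excludes $\lambda(x)=\lambda(y)+\lambda(z),$ so the final $S_x$ lands in the strict triangle-inequality region and Lemma \ref{5.8} gives an elliptic image.
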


\begin{proof} Since the functions in Lemma \ref{lambda2} are rational in $\lambda(x),$ $\lambda(y)$ and $\lambda(z),$ the argument in the proof of Proposition \ref{5.5} applies here and proves the first part. The second part is a result of the Trace Reduction Algorithm. 
\end{proof}


\section{Connected components of $\mathcal M(\Sigma_{0,4})$}\label{Sec:6}

We describe the connected component of the character space $\mathcal M(\Sigma_{0,4})$ in this section. Recall that for a quadruple $s$ of positive and negative signs, $\mathcal M_k^s(\Sigma_{0,4})$ is the space of conjugacy classes of type-preserving representations with relative Euler class $k$ and signs of the punctures $s.$  Let $V=\{v_1,\dots,v_4\}$ be the set of punctures of $\Sigma_{0,4}.$ Then Theorem \ref{main3} is equivalent to the following

\begin{theorem}\label{main3'} 
\begin{enumerate}[(1)]
\item For $\{i,j,k,l\}=\{1,2,3,4\},$ let $s_{ij}\in \{\pm 1\}^V$ be defined by $s_{ij}(v_i)=s_{ij}(v_j)=-1$ and $s_{ij}(v_k)=s_{ij}(v_l)=+1.$ Then $$\mathcal M_0(\Sigma_{0,4})=\coprod_{\{i,j\}\subset\{1,\dots,4\}}\mathcal M_0^{s_{ij}}(\Sigma_{0,4}).$$

\item For $i\in\{1,\dots,4\},$ let $s_i\in \{\pm 1\}^V$ be defined by $s_i(v_i)=-1$ and $s_i(v_j)=+1$ for $j\neq i,$ and let $s_+\in \{\pm 1\}^V$ be defined by $s_+(v_i)=1$ for all $i\in\{1,\dots,4\}.$ Then
$$\mathcal M_1(\Sigma_{0,4})=\coprod_{i=1}^4\mathcal M_1^{s_{i}}(\Sigma_{0,4})\coprod\mathcal M_1^{s_{+}}(\Sigma_{0,4}).$$

\item For $i\in\{1,\dots,4\},$ let $s_{-i}\in \{\pm 1\}^V$ be defined by $s_{-i}(v_i)=+1$ and $s_{-i}(v_j)=-1$ for $j\neq i,$ and let $s_-\in \{\pm 1\}^V$ be defined by $s_-(v_i)=-1$ for all $i\in\{1,\dots,4\}.$ Then
$$\mathcal M_{-1}(\Sigma_{0,4})=\coprod_{i=1}^4\mathcal M_{-1}^{s_{-i}}(\Sigma_{0,4})\coprod\mathcal M_{-1}^{s_{-}}(\Sigma_{0,4}).$$

\item All the spaces $\mathcal M_0^{s_{ij}}(\Sigma_{0,4}),$  $\mathcal M_1^{s_{i}}(\Sigma_{0,4}),$  $\mathcal M_1^{s_{+}}(\Sigma_{0,4}),$  $\mathcal M_{-1}^{s_{-i}}(\Sigma_{0,4})$ and $\mathcal M_{-1}^{s_{-}}(\Sigma_{0,4})$ are connected.
\end{enumerate}
\end{theorem}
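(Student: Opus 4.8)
The plan is to carry out everything in Kashaev's length coordinates on tetrahedral triangulations, as set up in Section \ref{Sec:4}. Since $\pi:\mathcal M^d(\Sigma_{0,4})\to\mathcal M(\Sigma_{0,4})$ is a principal $\mathbb R_{>0}^V$-bundle and $\mathbb R_{>0}^V$ is connected, a subset of $\mathcal M(\Sigma_{0,4})$ is connected if and only if its preimage in $\mathcal M^d(\Sigma_{0,4})$ is, so I would pass freely between the two. The first point to establish is that every $[\rho]\in\mathcal M_k(\Sigma_{0,4})$ with $k\in\{0,\pm1\}$ lies in $\mathcal M_{\mathcal T}(\Sigma_{0,4})$ for some tetrahedral $\mathcal T$; granting this and fixing a decoration, $[\rho]$ acquires a length coordinate $(\lambda,\epsilon)\in\mathbb R_{>0}^E\times\{\pm1\}^T$ having, by (\ref{euler}), exactly one negative entry of $\epsilon$ when $k=\pm1$ and exactly two when $k=0$, and the sign $s(v_i)$ of a puncture is the sign of the corresponding off-diagonal entry of the conjugated peripheral image.

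For the emptiness/non-emptiness content of (1)--(3) I would read those entries off from Lemma \ref{5.1} (case $k=1$) and Lemma \ref{57} (case $k=0$). When $k=1$ the four entries are, up to relabeling the punctures, $\lambda(x)+\lambda(y)+\lambda(z)$ together with the anti-triangular expressions $\lambda(y)+\lambda(z)-\lambda(x)$, $\lambda(x)+\lambda(z)-\lambda(y)$, $\lambda(x)+\lambda(y)-\lambda(z)$; the first is always positive, and adding any two of the defining inequalities $\lambda(x)>\lambda(y)+\lambda(z)$, etc., forces one $\lambda$-quantity to be non-positive, so at most one of the last three entries is negative and the sign tuple has at most one $-1$. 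When $k=0$ the entries occur as the two values $\pm(\lambda(y)+\lambda(z)-\lambda(x))$, each with multiplicity two, so the sign tuple has exactly two $+1$'s and two $-1$'s, and the partition into $\pm$-classes is the one cut out by the pair of opposite edges adjacent to the two equally-signed ideal triangles. The case $k=-1$ reduces to $k=1$ by precomposing with an orientation-reversing diffeomorphism (equivalently, post-composing $\rho$ with complex conjugation on $PSL(2,\mathbb R)$), which negates the relative Euler class and exchanges the two conjugacy classes of parabolics. Conversely, for each admissible sign tuple I would write down an explicit $(\lambda,\epsilon)$ on a fixed tetrahedral triangulation — placing the negative triangle(s) in the position matching the exceptional puncture or the prescribed partition, and choosing the six $\lambda$-lengths so that the anti-triangular expressions carry the required signs and $\psi_\epsilon(\lambda)\neq0$ — and invoke Theorem \ref{Kashaev2}, Proposition \ref{rational} and (\ref{euler}) to upgrade it to a type-preserving representation of the prescribed relative Euler class and signs. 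This gives exactly the six, five and five non-empty spaces listed.

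For the connectedness statement (4) the central observation is that, for a fixed tetrahedral $\mathcal T$ and a fixed $\epsilon$, the assignment $[(\rho,d)]\mapsto[\lambda(x):\lambda(y):\lambda(z)]$ realizes $\mathcal R(\mathcal T,\epsilon)$ as the pullback, along a trivial $\mathbb R_{>0}^3$-bundle, of an open subset of the open $2$-simplex $\Delta^\circ$, on which the sign pattern is locally constant with walls exactly the line $\lambda(x)=\lambda(y)+\lambda(z)$ and its two cyclic images. Each complementary region — the central triangle realizing $s_+$ or $s_-$, the three corner triangles realizing the patterns with one exceptional puncture, and the two half-simplices appearing when $k=0$ — is convex, so its preimage in $\mathbb R_{>0}^E$ is connected; hence each $\mathcal M_k^{s,d}(\Sigma_{0,4})$ is a union of connected ``regions'', one per pair $(\mathcal T,\epsilon)$ supporting the pattern $s$. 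It then remains to show this union is connected, which I would do by walking along the dual Farey tree, which joins any two tetrahedral triangulations by a chain of simultaneous diagonal switches: for a single switch $\mathcal T\leadsto\mathcal T'$, Lemma \ref{lambda'} (for $k=\pm1$) or Lemma \ref{lambda2} (for $k=0$), together with Lemma \ref{preserve}, shows that a dense open subset of a given region of $(\mathcal T,\epsilon)$ consists of representations still admissible in $\mathcal T'$ with the same sign pattern, landing in a region of $(\mathcal T',\epsilon')$ with $\epsilon'$ determined by Proposition \ref{change}; and since the admissibility wall of the switch splits the region into two halves carried onto regions over two different $\epsilon'$-sheets of $\mathcal T'$, the several $\epsilon$-sheets over each triangulation get glued to one another. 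Running over all switches makes the resulting ``region graph'' over the connected Farey tree connected, so $\mathcal M_k^{s,d}(\Sigma_{0,4})$ is a connected union of connected regions, hence connected, and therefore so is $\mathcal M_k^s(\Sigma_{0,4})$.

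I expect the main obstacle to be precisely this last bookkeeping: tracking, via Proposition \ref{change}, how $\epsilon$ transforms under a simultaneous switch and which region of $(\mathcal T',\epsilon')$ receives each piece of a region of $(\mathcal T,\epsilon)$, so as to certify that enough sheets are glued for the region graph to be connected. A secondary, more foundational point to pin down is the reduction used throughout — that every class in $\mathcal M_k(\Sigma_{0,4})$ for $k\in\{0,\pm1\}$ is admissible in some tetrahedral triangulation — which is what makes the peripheral-image computations of Lemmas \ref{5.1} and \ref{57} available for every representation rather than only generic ones.
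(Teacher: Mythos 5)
Your route is the paper's route: coordinates $(\lambda(x),\lambda(y),\lambda(z))$ on a tetrahedral triangulation, Lemmas \ref{5.1} and \ref{57} to constrain the sign patterns for $e(\rho)=\pm1$ and $e(\rho)=0$, regions of the simplex indexed by $\epsilon$ as in Lemma \ref{delta} and Corollary \ref{Delta}, and gluing of these regions by simultaneous diagonal switches. But two of the steps you lean on are gaps. The first is the foundational reduction you yourself flag: the claim that every class in $\mathcal M_k(\Sigma_{0,4})$ is admissible in \emph{some} tetrahedral triangulation is not available. Theorem \ref{Kashaev1} produces finitely many covering ideal triangulations, but these need not be tetrahedral, and the paper's own Trace Reduction Algorithm explicitly allows a tetrahedral triangulation in the sequence to fail admissibility; so you cannot invoke Lemmas \ref{5.1} and \ref{57} for an arbitrary representation. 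The paper circumvents this by fixing one tetrahedral $\mathcal T$: $\mathcal M_{\mathcal T}(\Sigma_{0,4})$ is dense and open, each $\mathcal M_k^s(\Sigma_{0,4})$ is open (the relative Euler class and the puncture signs are locally constant on type-preserving representations), hence $\mathcal M_k^s\neq\emptyset$ if and only if it meets $\mathcal M_{\mathcal T}$; and for (4) it suffices to exhibit a \emph{connected dense} subset of $\mathcal M_k^s$, since a set squeezed between a connected set and its closure is connected. Your argument should be rebuilt on this density/openness basis rather than on the covering claim.

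The second gap is that the heart of (4) is exactly the bookkeeping you defer: determining, via Proposition \ref{change}, which $\epsilon'$-sheet of $\mathcal T'$ receives each region, and certifying that the finitely many regions over the fixed $\mathcal T$ (the three regions $\Delta_x(\mathcal T,\epsilon_2),\Delta_y(\mathcal T,\epsilon_3),\Delta_z(\mathcal T,\epsilon_4)$ for $s_1$, the four regions $\Delta^c(\mathcal T,\epsilon_i)$ for $s_+$, the two half-simplex regions for $s_{12}$) actually become connected. The paper carries this out concretely: it performs one switch $S_z$ and checks, using Lemmas \ref{lambda'}, \ref{preserve}, \ref{lambda2} together with Proposition \ref{change}, that suitably chosen points of two different regions land in one common connected region over $\mathcal T'$ (such as $\Delta_z(\mathcal T',\epsilon'_4)$ or $\Delta^c(\mathcal T',\epsilon'_4)$), which yields explicit connecting paths, and then concludes by density. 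Your heuristic mechanism, that the admissibility wall of the switch splits a region into two halves landing on two different $\epsilon'$-sheets, is the right picture but is not automatic: for example, under $S_z$ the wall is $\lambda(x)=\lambda(y)$, and the region $\{\lambda(x)>\lambda(y)+\lambda(z)\}$ does not meet it at all, so that region is carried wholly into a single sheet; the gluing only appears when the region and the switch are chosen compatibly and the sign rule of Proposition \ref{change} (the triangles adjacent to the shorter pair of edges keep their signs) is tracked carefully. Until that finite case check is written out, the connectedness claim in (4) is unproven. The remaining ingredients of your proposal (the sign-pattern constraints for $k=0,\pm1$, the reduction of $k=-1$ to $k=1$ by an orientation-reversing conjugation, convexity and hence connectedness of each individual region, and realizing each admissible sign pattern by an explicit $(\lambda,\epsilon)$) agree with the paper and are fine.
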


Let $\mathcal T$ be a tetrahedral triangulation of $\Sigma_{0,4}.$  Recall that $\mathcal M_{\mathcal T}(\Sigma_{0,4})$ is the space of conjugacy classes of type-preserving representations $\rho$ such that  $\mathcal T$ is $\rho$-admissible. By Theorem \ref{Kashaev1}, $\mathcal M_{\mathcal T}(\Sigma_{0,4})$ is a dense and open subset of $\mathcal M(\Sigma_{0,4}).$ 
Let $E$ and $T$ respectively be the sets of edges and ideal triangles of $\mathcal T,$ let $t_i\in T$ be the ideal triangle disjoint from the puncture $v_i$ and let $e_{ij}\in E$ be the edge connecting the punctures $v_i$ and $v_j.$
For $\lambda\in\mathbb R_{>0}^E,$ let $\lambda(x)=\lambda(e_{12})\lambda(e_{34}),$ $\lambda(y)=\lambda(e_{13})\lambda(e_{24})$ and $\lambda(z)=\lambda(e_{14})\lambda(e_{23}).$ We first show that the quantities $\lambda(x),$ $\lambda(y)$ and $\lambda(z)$ parametrize the components of $\mathcal M_{\mathcal T}(\Sigma_{0,4}).$

\begin{lemma}\label{delta} Let $\mathbb R_{>0}^E$ be with the principle $\mathbb R_{>0}^V$-bundle structure given by $(\mu\cdot\lambda)(e_{ij})=\mu(v_i)\lambda(e_{ij})\mu(v_j),$ and let $\mathbb R_{>0}^3$ be with the principle $\mathbb R_{>0}$-bundle structure defined by $r\cdot(a,b,c)=(ra,rb,rc).$ Then the map $\phi:\mathbb R_{>0}^E\rightarrow\mathbb R_{>0}^3$ sending $\big(\lambda(e_{12}),\dots,\lambda(e_{34})\big)$ to $\big(\lambda(x),\lambda(y),\lambda(z)\big)$ induces a diffeomorphism $\phi^*:\mathbb R_{>0}^E/\mathbb R_{>0}^V\rightarrow\mathbb R_{>0}^3/\mathbb R_{>0}.$
\end{lemma}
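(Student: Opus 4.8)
The plan is to linearize the whole statement by passing to logarithmic coordinates, where both group actions and the map $\phi$ become linear. Write $\ell_{ij}=\log\lambda(e_{ij})$, so that $\log:\mathbb{R}_{>0}^E\xrightarrow{\ \sim\ }\mathbb{R}^E$ intertwines the principal $\mathbb{R}_{>0}^V$-action with the translation action $(m\cdot\ell)(e_{ij})=\ell_{ij}+m_i+m_j$ of the linear subspace $W=\{(m_i+m_j)_{ij}:m\in\mathbb{R}^V\}\subseteq\mathbb{R}^E$. Since $m_i+m_j=0$ for all $i\neq j$ forces $m=0$, the map $m\mapsto(m_i+m_j)$ is injective, so $W$ is $4$-dimensional, the action is free and proper (translation by a linear subspace always is), and $\mathbb{R}_{>0}^E/\mathbb{R}_{>0}^V$ is a smooth manifold, identified by $\log$ with the $2$-dimensional vector space $\mathbb{R}^E/W$. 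In the same way $\log$ identifies $\mathbb{R}_{>0}^3/\mathbb{R}_{>0}$ with $\mathbb{R}^3/\langle(1,1,1)\rangle$, and turns $\phi$ into the linear map $\Phi:\mathbb{R}^E\to\mathbb{R}^3$, $\Phi(\ell)=(\ell_{12}+\ell_{34},\,\ell_{13}+\ell_{24},\,\ell_{14}+\ell_{23})$, which is visibly surjective.

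Next I would check that $\Phi$ sends $W$ into $\langle(1,1,1)\rangle$ — indeed $\Phi\big((m_i+m_j)_{ij}\big)=(m_1+m_2+m_3+m_4)(1,1,1)$, in fact surjectively — so $\Phi$ descends to a well-defined linear map $\overline\Phi:\mathbb{R}^E/W\to\mathbb{R}^3/\langle(1,1,1)\rangle$; this is precisely the logarithmic model of the asserted $\phi^*$. It then remains only to show $\overline\Phi$ is a linear isomorphism, and since both spaces are $2$-dimensional it suffices to prove injectivity, i.e. $\Phi^{-1}\big(\langle(1,1,1)\rangle\big)\subseteq W$.

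The content of the lemma is thus the following elementary fact: if $\ell\in\mathbb{R}^E$ satisfies $\ell_{12}+\ell_{34}=\ell_{13}+\ell_{24}=\ell_{14}+\ell_{23}$, then $\ell_{ij}=m_i+m_j$ for some $m\in\mathbb{R}^V$. I would prove this by solving the $6\times 4$ linear system explicitly: the three equations $\ell_{1j}=m_1+m_j$ $(j=2,3,4)$ express $m_2,m_3,m_4$ in terms of $m_1$, the equation $\ell_{23}=m_2+m_3$ then forces $m_1=\tfrac12(\ell_{12}+\ell_{13}-\ell_{23})$, and a direct check shows the two remaining equations $\ell_{24}=m_2+m_4$ and $\ell_{34}=m_3+m_4$ hold exactly because of the two hypothesized equalities $\ell_{14}+\ell_{23}=\ell_{13}+\ell_{24}$ and $\ell_{14}+\ell_{23}=\ell_{12}+\ell_{34}$. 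This yields injectivity, hence $\overline\Phi$ is a linear isomorphism, hence a diffeomorphism; transporting back along $\exp$ and $\log$ shows $\phi^*$ is a diffeomorphism.

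I do not expect a genuine obstacle here: once set up in logarithmic coordinates the statement is pure linear algebra, and the only point needing a little care is the bookkeeping that the two actions are free and proper so that the quotients are honest smooth manifolds on which $\phi^*$ is defined — which is immediate since in log coordinates they are translations by linear subspaces. The one computational step carrying the content, and the one I would write out in full, is the solution of the system $\ell_{ij}=m_i+m_j$ above, which exhibits the three ``opposite-edge-sum'' equalities as exactly the compatibility conditions and thereby pins down $\ker\overline\Phi=0$.
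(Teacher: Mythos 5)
Your proof is correct, and up to taking logarithms it is essentially the paper's argument: the paper likewise checks equivariance (well-definedness), surjectivity via an explicit section, and injectivity by explicitly producing the rescaling $\mu\in\mathbb R_{>0}^V$ carrying $\lambda$ to $\lambda'$, which is exactly the multiplicative counterpart of your solution of the linear system $\ell_{ij}=m_i+m_j$ under the three opposite-edge-sum equalities. Your linearized setup just makes the smoothness of the quotients and of $\phi^*$ slightly more transparent than the paper's brief remark.
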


\begin{proof} Since $\phi(\mu\cdot\lambda)=\prod_{i=1}^4\mu(v_i)\cdot\phi(\lambda),$ $\phi^*$ is well defined, and since $\phi(a^{\frac{1}{2}}, b^{\frac{1}{2}}, c^{\frac{1}{2}}, c^{\frac{1}{2}}, b^{\frac{1}{2}},a^{\frac{1}{2}})=(a,b,c)$ for all $(a,b,c)\in\mathbb R_{>0}^3,$ $\phi^*$ is surjective. For the injectivity, we suppose that $\phi(\lambda')=r\cdot\phi(\lambda).$ Let $\nu_i(\lambda)=\prod_{j\neq i}\lambda(e_{ij})^2\prod_{j,k\neq i}\lambda(e_{jk})$ and let $\mu(v_i)=r^{\frac{1}{2}}\nu_i(\lambda')^{\frac{1}{6}}/\nu_i(\lambda)^{\frac{1}{6}}.$ Then $\lambda'(e_{ij})=\mu(v_i)\lambda(e_{ij})\mu(v_j).$ Therefore, $\phi^*$ is injective. The differentiability of $\phi^*$ and $(\phi^*)^{-1}$ follows from the definition of $\phi.$
\end{proof}

As a consequence of Theorem \ref{Kashaev2}, Lemma \ref{5.1}, Lemma \ref{57} and Lemma \ref{delta}, we have

\begin{corollary}\label{Delta}
Let $\mathcal T$ be a tetrahedral triangulation of $\Sigma_{0,4}$ with the set of ideal triangles $T.$  Then 
$$\mathcal M_{\mathcal T}(\Sigma_{0,4})\cong\coprod_{\epsilon\in\{\pm1\}^T}\Delta(\mathcal T,\epsilon),$$
where each $\Delta(\mathcal T,\epsilon)$ is a subset of $\Delta=\{(a,b,c)\in\mathbb R_{>0}^3\ |\ a+b+c=1\}$ defined as follows. 
\begin{enumerate}[(1)]

\item For $i\in\{1,\dots,4\},$ let $\epsilon_i\in\{\pm1\}^T$ be given by $\epsilon_i(t_i)=-1$ and $\epsilon_i(t_j)=1$ for $j\neq i,$ and let $\epsilon_{-i}\in\{\pm1\}^T$ be given by $\epsilon_{-i}(t_i)=1$ and $\epsilon_{-i}(t_j)=-1$ for $j\neq i.$ Then
$$\Delta(\mathcal T,\epsilon_{i})=\Delta(\mathcal T,\epsilon_{-i})=\{(a,b,c)\in\Delta\ |\ a\neq b+c,\ b\neq a+c\text{ and }c\neq a+b\}.$$

\item For $\{i,j,k,l\}=\{1,\dots,4\},$ let $\epsilon_{ij}\in\{\pm1\}^T$ be given by $\epsilon_{ij}(t_i)=\epsilon_{ij}(t_j)=-1$ and $\epsilon_{ij}(t_k)=\epsilon_{ij}(t_l)=1.$ Then
\begin{equation*}
\begin{split}
&\Delta(\mathcal T,\epsilon_{12})=\Delta(\mathcal T,\epsilon_{34})=\{(a,b,c)\in\Delta\ |\ a\neq b+c\},\\
&\Delta(\mathcal T,\epsilon_{13})=\Delta(\mathcal T,\epsilon_{24})=\{(a,b,c)\in\Delta\ |\ b\neq a+c\}\ \text{ and}\\
&\Delta(\mathcal T,\epsilon_{14})=\Delta(\mathcal T,\epsilon_{23})=\{(a,b,c)\in\Delta\ |\ c\neq a+b\}.
\end{split}
\end{equation*}
\end{enumerate}
\end{corollary}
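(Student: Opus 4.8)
The plan is to deduce the statement from Theorem \ref{Kashaev2}, Proposition \ref{rational}, Lemma \ref{delta} and the two trace computations of Lemmas \ref{5.1} and \ref{57}, in three stages: a formal reduction, two base-case computations, and a propagation by symmetry. For the reduction, by Theorem \ref{Kashaev2} one has $\mathcal M^d_{\mathcal T}(\Sigma_{0,4})=\coprod_{\epsilon\in\{\pm1\}^T}\mathcal R(\mathcal T,\epsilon)$, with each $\mathcal R(\mathcal T,\epsilon)$ identified $\mathbb R_{>0}^V$-equivariantly, via the $\lambda$-lengths, with the open subset $\Omega(\mathcal T,\epsilon)\subset\mathbb R_{>0}^E$ which by Proposition \ref{rational} is the complement of the zero locus of $\psi_\epsilon=\prod_v\psi_{v,\epsilon}$. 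Since $\mathcal M_{\mathcal T}(\Sigma_{0,4})$ is the quotient of $\mathcal M^d_{\mathcal T}(\Sigma_{0,4})$ by the free $\mathbb R_{>0}^V$-action, this gives $\mathcal M_{\mathcal T}(\Sigma_{0,4})\cong\coprod_\epsilon\Omega(\mathcal T,\epsilon)/\mathbb R_{>0}^V$; and by Lemma \ref{delta} the map $\phi$ induces a diffeomorphism $\phi^*:\mathbb R_{>0}^E/\mathbb R_{>0}^V\to\mathbb R_{>0}^3/\mathbb R_{>0}$, whose target I identify with $\Delta$ by normalizing each $\mathbb R_{>0}$-orbit to have coordinate sum $1$. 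So I would simply define $\Delta(\mathcal T,\epsilon)$ to be the image of $\Omega(\mathcal T,\epsilon)$ under this composite; since $\psi_{v,\epsilon}$ is, by Lemmas \ref{5.1} and \ref{57}, a polynomial in $\lambda(x),\lambda(y),\lambda(z)$ whose zero set is a cone in $\mathbb R_{>0}^3$, and $\Omega(\mathcal T,\epsilon)$ is $\mathbb R_{>0}^V$-saturated, this image is well defined, and everything reduces to computing it.

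For the base computations, take $\epsilon=\epsilon_1$, the relative Euler class $1$ pattern with unique negative triangle $t_1$. Lemma \ref{5.1} gives the off-diagonal entries of the parabolics $\rho([\gamma_1]),\dots,\rho([\gamma_4])$ as $\lambda(x)+\lambda(y)+\lambda(z)$, $\lambda(y)+\lambda(z)-\lambda(x)$, $\lambda(x)+\lambda(z)-\lambda(y)$ and $\lambda(x)+\lambda(y)-\lambda(z)$; the first is always positive, so $\psi_{\epsilon_1}$ vanishes exactly when one of $\lambda(x)=\lambda(y)+\lambda(z)$, $\lambda(y)=\lambda(x)+\lambda(z)$, $\lambda(z)=\lambda(x)+\lambda(y)$ holds, and rescaling the complement into $\Delta$ produces exactly the set in (1). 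Similarly, for $\epsilon=\epsilon_{12}$, a relative Euler class $0$ pattern with negative triangles $t_1,t_2$, Lemma \ref{57} gives the entries as $\lambda(y)+\lambda(z)-\lambda(x)$ for $v_1,v_2$ and $\lambda(x)-\lambda(y)-\lambda(z)$ for $v_3,v_4$, so $\psi_{\epsilon_{12}}$ vanishes exactly when $\lambda(x)=\lambda(y)+\lambda(z)$, giving the first of the three sets in (2).

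For the propagation to all remaining sign patterns I would use two symmetries. Post-composing $\rho$ with the outer automorphism of $PSL(2,\mathbb R)$ induced by conjugation by an orientation-reversing isometry of $\mathbb H^2$ fixes all $\lambda$-lengths, hence $\lambda(x),\lambda(y),\lambda(z)$, negates every sign $\epsilon(t)$, and changes each $\psi_{v,\epsilon}$ only by a sign; this identifies $\mathcal R(\mathcal T,\epsilon)$ with $\mathcal R(\mathcal T,-\epsilon)$ with the same image in $\Delta$, giving $\Delta(\mathcal T,\epsilon_{-i})=\Delta(\mathcal T,\epsilon_i)$ and $\Delta(\mathcal T,\epsilon_{34})=\Delta(\mathcal T,\epsilon_{12})$, and likewise for the other pairs. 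Next, every permutation $\sigma$ of the four punctures is realized by a self-diffeomorphism of $\Sigma_{0,4}$ carrying $\mathcal T$ to itself up to relabeling, since the automorphism group of the tetrahedral complex is $S_4$; the induced action sends $\epsilon_{ij}$ to $\epsilon_{\sigma(i)\sigma(j)}$ and acts on $(\lambda(x),\lambda(y),\lambda(z))$ through the quotient $S_4\to S_3$ permuting $\{x,y,z\}$ — because $x=\{e_{12},e_{34}\}$ is precisely the pair of opposite edges each bounded by two same-sign triangles for $\epsilon_{12}$ (and $\epsilon_{34}$), and similarly $y$ for $\epsilon_{13},\epsilon_{24}$ and $z$ for $\epsilon_{14},\epsilon_{23}$. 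Feeding this into the two base computations yields all the sets in (1) and (2). Finally, for the two extremal patterns (all $+1$ or all $-1$) the representation is Fuchsian by (\ref{euler}) and Goldman's theorem, so every peripheral image is a nontrivial parabolic, $\psi_\epsilon$ never vanishes, and $\Delta(\mathcal T,\epsilon)=\Delta$.

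I expect the main obstacle to be exactly this last propagation step: one must verify carefully that the $S_4$-action on tetrahedral triangulations acts on the coordinates $(\lambda(x),\lambda(y),\lambda(z))$ and on the rational functions $\psi_{v,\epsilon}$ precisely as asserted — equivalently, that rerunning the Lemma \ref{3.1}-style computation with the triangle labels permuted reproduces Lemmas \ref{5.1} and \ref{57} with $x,y,z$ permuted accordingly, and that the orientation-reversal symmetry indeed acts trivially on $\lambda$-lengths and by a global sign on the $\psi_{v,\epsilon}$. This is combinatorial bookkeeping about how the tetrahedral triangulation interacts with the sign patterns rather than a genuine analytic difficulty; the rest of the argument is formal once Lemma \ref{delta} is in hand.
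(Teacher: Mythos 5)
Your proof is correct and takes essentially the same route as the paper, which states this corollary as a direct consequence of Theorem \ref{Kashaev2} (with Proposition \ref{rational}), Lemma \ref{5.1}, Lemma \ref{57} and Lemma \ref{delta}, leaving the reduction to the two computed sign patterns as a ``without loss of generality'' relabeling that you have simply made explicit via the tetrahedral symmetry and the sign-flip induced by conjugating with an orientation-reversing isometry. The only point to watch is that odd permutations of the punctures are realized by orientation-reversing diffeomorphisms, which also negate all $\epsilon(t)$; but since $A_4$ already acts transitively on the punctures and on pairs of punctures, even permutations together with your sign-flip involution cover every case, so the argument is complete.
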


\begin{proof}[Proof of Theorem \ref{main3'}] Suppose  $s$ is a quadruple of positive and negative signs and $k\in\{1,0,-1\}.$ Let $\mathcal T$ be a tetrahedral triangulation of $\Sigma_{0,4}$. Since $\mathcal M_{\mathcal T}(\Sigma_{0,4})$ is dense and open in $\mathcal M(\Sigma_{0,4}),$ $\mathcal M_k^s(\Sigma_{0,4})\neq\emptyset$ if and only if $\mathcal M_k^s(\Sigma_{0,4})\cap\mathcal M_{\mathcal T}(\Sigma_{0,4})\neq\emptyset.$  For (1), by Lemma \ref{57}, the only possibility for $\mathcal M_{\mathcal T}(\Sigma_{0,4})\cap \mathcal M_0^s(\Sigma_{0,4})\neq\emptyset$ is that $s=s_{ij}$ for some $\{i,j\}\subset\{1,\dots,4\}.$ For (2), by Lemma \ref{5.1}, the only possibility for $\mathcal M_{\mathcal T}(\Sigma_{0,4})\cap \mathcal M_1^s(\Sigma_{0,4})\neq\emptyset$ is that either $s=s_+,$ in which case $\lambda(x),$ $\lambda(y)$ and $\lambda(z)$ satisfy the triangular inequality, or $s=s_i$ for some $i\in\{1,\dots,4\},$ in which case $\lambda(x),$ $\lambda(y)$ and $\lambda(z)$ satisfy one of the inequalities in (\ref{anti-tri}). The proof of (3) is parallel to that of (2).

For (4), by symmetry, it suffices to prove the connectedness of $\mathcal M_0^{s_{12}}(\Sigma_{0,4}),$  $\mathcal M_1^{s_{1}}(\Sigma_{0,4})$ and $\mathcal M_1^{s_{+}}(\Sigma_{0,4}).$ We consider the following subsets of $\Delta.$ Let $\Delta_x(\mathcal T,\epsilon)$ (resp. $\Delta_y(\mathcal T,\epsilon)$ and $\Delta_z(\mathcal T,\epsilon)$) be the set of points $(a,b,c)\in\Delta$ such that $a>b+c$ (resp. $b>a+c$ and $c>a+b$), let $\Delta_x^c(\mathcal T,\epsilon)$ (resp. $\Delta_y^c(\mathcal T,\epsilon)$ and $\Delta_z^c(\mathcal T,\epsilon)$) be the set of points  $(a,b,c)\in\Delta$ such that $a<b+c$ (resp. $b<a+c$ and $c<a+b$) and let $\Delta^c(\mathcal T,\epsilon)=\Delta_x^c(\mathcal T,\epsilon)\cap\Delta_y^c(\mathcal T,\epsilon)\cap\Delta_z^c(\mathcal T,\epsilon).$ See Figure \ref{Fig10}. By Theorem \ref{Kashaev2}, Lemma \ref{5.1}, Lemma \ref{57} and Corollary \ref{Delta}, we have via the lengths coordinate that
\begin{figure}[htbp]
\centering
\includegraphics[scale=0.4]{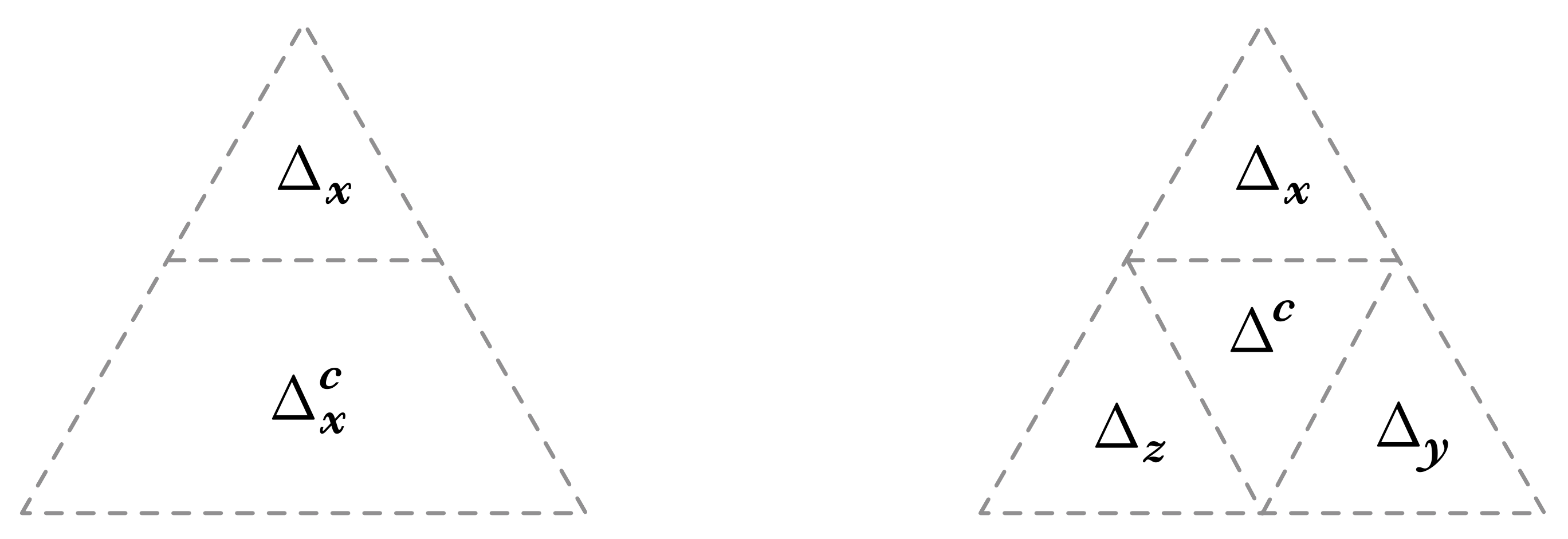}
\caption{}
\label{Fig10}
\end{figure}
\begin{enumerate}[(1)]
\item $\Delta_x(\mathcal T,\epsilon_{12})\coprod\Delta_x^c(\mathcal T,\epsilon_{34})$ is diffeomorphic to a dense and open subset of $\mathcal M_0^{s_{12}}(\Sigma_{0,4}),$
\item $\Delta_x(\mathcal T,\epsilon_2)\coprod\Delta_y(\mathcal T,\epsilon_3)\coprod\Delta_z(\mathcal T,\epsilon_4)$ is diffeomorphic to a dense and open subset of $\mathcal M_1^{s_1}(\Sigma_{0,4}),$ and
\item $\coprod_{i=1}^4\Delta^c(\mathcal T,\epsilon_i)$ is diffeomorphic to a dense and open subset of $\mathcal M_1^{s_+}(\Sigma_{0,4}).$ 
\end{enumerate}
In the rest of the proof, we let $\mathcal T'$ be the tetrahedral triangulation of $\Sigma_{0,4}$ obtained from $\mathcal T$ by doing a simultaneous diagonal switch $S_z,$ let $T'
$ be the set of ideal triangles of $\mathcal T',$ and let $\epsilon'_i$ and $\epsilon'_{ij}\in\{\pm1\}^{T'}$ be sign assignments defined in the same way as $\epsilon_i$ and $\epsilon_{ij}.$ For $(a,b,c)\in\mathbb R_{>0}^3,$ we let $[a,b,c]\doteq(\frac{a}{a+b+c},\frac{b}{a+b+c},\frac{c}{a+b+c})\in\Delta.$ 

For the connectedness of $\mathcal M_0^{s_{12}}(\Sigma_{0,4}),$ since both $\Delta_x(\mathcal T,\epsilon_{12})$ and $\Delta_x^c(\mathcal T,\epsilon_{34})$ are connected, it suffices to choose two points $p$ and $q$ respectively in $\Delta_x(\mathcal T,\epsilon_{12})$ and $\Delta_x^c(\mathcal T,\epsilon_{34})$ and find a path in $\mathcal M_0^{s_{12}}(\Sigma_{0,4})$ connecting $p$ and $q.$ Now let $p=(a,b,c)\in\Delta_x(\mathcal T,\epsilon_{12})$ and let $q=(a',b',c')\in\Delta_x^c(\mathcal T,\epsilon_{34})$ with $a'>b'.$ By Proposition \ref{change} and Lemma \ref{lambda2}, $p$ corresponds to the point $p'=[a,b,(a-b)^2/c]\in\Delta_x(\mathcal T',\epsilon'_{12})$ and $q$ corresponds to the point $q'=[a',b',(a'-b')^2/c']\in\Delta_x(\mathcal T',\epsilon'_{12}).$  Since $\Delta_x(\mathcal T',\epsilon'_{12})$ is connected, there is a path in $\Delta_x(\mathcal T',\epsilon'_{12})$ connecting $p'$ and $q',$ giving a path in $\mathcal M_0^{s_{12}}(\Sigma_{0,4})$ connecting $p$ and $q.$ 

For the connectedness of $\mathcal M_1^{s_1}(\Sigma_{0,4}),$ we let $p=(a,b,c)\in\Delta_x(\mathcal T,\epsilon_2)$ and let $q=(a',b',c')\in\Delta_y(\mathcal T,\epsilon_3).$ By Proposition \ref{change}, Lemma \ref{lambda'} and Lemma \ref{preserve}, $p$ corresponds to the point $p'=[a,b,|a^2-b^2|/c]\in\Delta_z(\mathcal T',\epsilon'_{4})$ and $q$ corresponds to the point $q'=[a',b',|a'^2-b'^2|/c']\in\Delta_z(\mathcal T',\epsilon'_{4}).$  Since $\Delta_z(\mathcal T',\epsilon'_{4})$ is connected, there is a path in $\Delta_z(\mathcal T',\epsilon'_{4})$ connecting $p'$ and $q',$ giving a path in $\mathcal M_1^{s_{1}}(\Sigma_{0,4})$ connecting $p$ and $q.$ Similarly, any pair of points $q\in\Delta_y(\mathcal T,\epsilon_3)$ and $r\in\Delta_z(\mathcal T,\epsilon_4)$ and any pair of points $p\in\Delta_y(\mathcal T,\epsilon_2)$ and $r\in\Delta_z(\mathcal T,\epsilon_4)$ can respectively be connected by paths in $\mathcal M_1^{s_1}(\Sigma_{0,4}).$ Therefore, $\mathcal M_1^{s_1}(\Sigma_{0,4})$ is connected. 

Finally, for the connectedness of $\mathcal M_1^{s_+}(\Sigma_{0,4}),$ we let $p=(a,b,c)\in\Delta^c(\mathcal T,\epsilon_2)$ with $a>b$ and let $q=(a',b',c')\in\Delta^c(\mathcal T,\epsilon_3)$ with $b'>a'.$ By Proposition \ref{change}, Lemma \ref{lambda'} and Lemma \ref{preserve}, $p$ corresponds to the point $p'=[a,b,|a^2-b^2|/c]\in\Delta^c(\mathcal T',\epsilon'_{4})$ and $q$ corresponds to the point $q'=[a',b',|a'^2-b'^2|/c']\in\Delta^c(\mathcal T',\epsilon'_{4}).$  Since $\Delta^c(\mathcal T',\epsilon'_{4})$ is connected, there is a path in $\Delta^c(\mathcal T',\epsilon'_{4})$ connecting $p'$ and $q',$ giving a path in $\mathcal M_1^{s_{+}}(\Sigma_{0,4})$ connecting $p$ and $q.$ Similarly, all the other pieces can be connected by paths in $\mathcal M_1^{s_+}(\Sigma_{0,4}),$ and $\mathcal M_1^{s_+}(\Sigma_{0,4})$ is connected.
\end{proof}


\section{Ergodicity of the $Mod(\Sigma_{0,4})$-action}\label{Sec:7}


The goal of this section is to prove the ergodicity of the $Mod(\Sigma_{0,4})$-action on the non-external connected components of $\mathcal M(\Sigma_{0,4}).$ To use the techniques we used in the previous sections, we need to understand the measure on $\mathcal M(\Sigma_{0,4})$ induced by the Goldman symplectic $2$-form in terms the quantities $\lambda(x),$ $\lambda(y)$ and $\lambda(z).$ Let $\mathcal T$ be a tetrahedral triangulation of $\Sigma_{0,4},$ and let $T$ be the set of ideal triangles of $\mathcal T.$ For each $\epsilon\in\{\pm1\}^T,$ let $\Delta(\mathcal T,\epsilon)$ be the subset of $\mathbb R_{>0}^3$ defined in Corollary \ref{Delta}. Then by Equation (\ref{WP}), Lemma \ref{delta}, Corollary \ref{Delta} and a direct calculation,  we have the following

\begin{proposition}\label{WeilP} 
For each $\epsilon\in\{\pm1\}^T,$ the $2$-form
$$\omega=\frac{d\lambda(x)\wedge d\lambda(y)}{\lambda(x)\lambda(y)}+\frac{d\lambda(y)\wedge d\lambda(z)}{\lambda(y)\lambda(z)}+\frac{d\lambda(z)\wedge d\lambda(x)}{\lambda(z)\lambda(x)}$$
on $\Delta(\mathcal T,\epsilon)$ corresponds to the Goldman symplectic $2$-form $\omega_{WP}$ on $\mathcal M(\Sigma_{0,4}),$ 
and the measure induced by $\omega$ is in the same measure class of the Lebesgue measure on $\Delta(\mathcal T,\epsilon).$ 
\end{proposition}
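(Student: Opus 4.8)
The plan is to pull the formula (\ref{WP}) back to $\mathbb{R}_{>0}^E$, rewrite both it and $\omega$ in logarithmic coordinates, and match them term by term; the measure statement then falls out of a one-line computation. Fix the tetrahedral triangulation $\mathcal{T}$ with edges $e_{ij}$ and ideal triangles $t_i$ as in Figure \ref{Fig3}, and set $u_{ij}=\log\lambda(e_{ij})$. Since $d\lambda(f_1)\wedge d\lambda(f_2)/(\lambda(f_1)\lambda(f_2))=du_{f_1}\wedge du_{f_2}$, the formula (\ref{WP}) reads, on $\mathbb{R}_{>0}^E$,
$$\pi^*\omega_{WP}=\sum_{i=1}^4\Omega(t_i),\qquad \Omega(t)=du_{f_1}\wedge du_{f_2}+du_{f_2}\wedge du_{f_3}+du_{f_3}\wedge du_{f_1},$$
where $f_1,f_2,f_3$ are the edges of the ideal triangle $t$ in the cyclic order induced by the orientation of $\Sigma_{0,4}$. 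Writing $x'=\log\lambda(x)=u_{12}+u_{34}$, $y'=\log\lambda(y)=u_{13}+u_{24}$, $z'=\log\lambda(z)=u_{14}+u_{23}$, the map $\phi$ of Lemma \ref{delta} pulls $\omega$ back to the $2$-form
$$\Phi:=dx'\wedge dy'+dy'\wedge dz'+dz'\wedge dx'$$
on $\mathbb{R}_{>0}^E$.

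Next I would reduce the claimed identification on $\mathcal{M}(\Sigma_{0,4})$ to the equality $\pi^*\omega_{WP}=\Phi$ of $2$-forms on $\mathbb{R}_{>0}^E$. A quick check shows that $\omega$ is invariant under the scaling $r\cdot(a,b,c)=(ra,rb,rc)$ — the terms containing $d\log r$ telescope to zero — and that the contraction of $\omega$ with the scaling vector field vanishes, so $\omega$ descends to $\mathbb{R}_{>0}^3/\mathbb{R}_{>0}\cong\Delta$; consequently $\Phi=\phi^*\omega$ is $\mathbb{R}_{>0}^V$-invariant and horizontal, hence descends to $\mathbb{R}_{>0}^E/\mathbb{R}_{>0}^V$, and so does $\pi^*\omega_{WP}$ by construction. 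Since by Lemma \ref{delta} and Corollary \ref{Delta} the map $\phi$ induces a diffeomorphism between these two quotients which intertwines Kashaev's lengths coordinates of Theorem \ref{Kashaev2} with the coordinates on the pieces $\Delta(\mathcal{T},\epsilon)$, and since the quotient projection is a submersion, $\pi^*\omega_{WP}=\Phi$ upstairs is equivalent to $\omega_{WP}=\omega$ downstairs.

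The computation of $\pi^*\omega_{WP}=\Phi$ is finite and combinatorial. The edges of $t_i$ are the three $e_{jk}$ with $j,k\neq i$; since no face of a tetrahedron contains a pair of opposite edges, each $\Omega(t_i)$, and likewise $\Phi$, is a combination of basic $2$-forms $du_{ij}\wedge du_{kl}$ in which $e_{ij}$ and $e_{kl}$ lie in different ones of the pairs $x,y,z$ — there are exactly twelve such. Reading the four cyclic orders off Figure \ref{Fig3} and expanding both $\sum_{i=1}^4\Omega(t_i)$ and $\Phi$ in this basis, one checks that the two have the same $\pm1$ coefficients (possibly up to a single overall sign, which is fixed by the orientation convention and is immaterial for the measure-class assertion). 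Keeping the cyclic orders and the signs straight in this bookkeeping is the only real obstacle in the proof; everything else is formal.

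Finally, for the measure statement I would restrict $\omega$ to $\Delta$ via $c=1-a-b$, so that $dc=-da-db$; a direct substitution gives
$$\omega|_{\Delta}=\Big(\frac{1}{ab}+\frac{1}{bc}+\frac{1}{ca}\Big)\,da\wedge db=\frac{a+b+c}{abc}\,da\wedge db=\frac{1}{abc}\,da\wedge db,$$
which is a smooth, nowhere-vanishing $2$-form on the $2$-manifold $\Delta$, and in particular on the open subset $\Delta(\mathcal{T},\epsilon)\subset\Delta$. Hence the measure it induces has a positive smooth density with respect to the restriction of Lebesgue measure on $\Delta(\mathcal{T},\epsilon)$, which is exactly the final claim.
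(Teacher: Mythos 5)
Your proposal is correct and takes essentially the same route as the paper, which gives no details beyond citing Equation (\ref{WP}), Lemma \ref{delta}, Corollary \ref{Delta} and ``a direct calculation''; your logarithmic-coordinate expansion is precisely that calculation. The sign bookkeeping you defer does work out: with the cyclic orders induced by one fixed orientation, the twelve basic $2$-forms $du_{ij}\wedge du_{kl}$ in $\sum_{i}\Omega(t_i)$ and in $dx'\wedge dy'+dy'\wedge dz'+dz'\wedge dx'$ match term by term with identical signs, and the restriction $\frac{1}{abc}\,da\wedge db$ on $\Delta$ settles the measure-class claim.
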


As a consequence, we have

\begin{proposition}\label{dense} For $k\in\{-1,0,1\},$ the set $\Omega_k(\Sigma_{0,4})$ consisting of conjugacy classes of type-preserving representations $\rho$ with the relative Euler class $e(\rho)=k$ such that all the tetrahedra triangulation of $\Sigma_{0,4}$ are $\rho$-admissible is a full measure subset of $\mathcal M_k(\Sigma_{0,4}),$ and is invariant under the $Mod(\Sigma_{0,4})$-action.
\end{proposition}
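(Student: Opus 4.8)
The plan is to prove the two assertions — full measure and $Mod(\Sigma_{0,4})$-invariance — separately: the first by combining the measure-zero analysis already carried out in the proofs of Propositions~\ref{5.5} and~\ref{5.11} with the Lebesgue-class statement of Proposition~\ref{WeilP}, and the second by a short naturality argument.

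For the full measure claim I would fix a tetrahedral triangulation $\mathcal T_0$ of $\Sigma_{0,4}$. Since every $[\rho]\in\Omega_k(\Sigma_{0,4})$ has $\mathcal T_0$ as a $\rho$-admissible triangulation, $\Omega_k(\Sigma_{0,4})\subseteq\mathcal M_{\mathcal T_0}(\Sigma_{0,4})$, so it suffices to show (i) $\mathcal M_k(\Sigma_{0,4})\setminus\mathcal M_{\mathcal T_0}(\Sigma_{0,4})$ has measure zero, and (ii) within $\mathcal M_{\mathcal T_0}(\Sigma_{0,4})\cap\mathcal M_k(\Sigma_{0,4})$ the complement of $\Omega_k(\Sigma_{0,4})$ has measure zero. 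For (ii): by Lemma~\ref{delta}, Corollary~\ref{Delta} and Proposition~\ref{WeilP}, this chart is identified, through the projectivized lengths coordinates $(\lambda(x),\lambda(y),\lambda(z))$, with a union of pieces of $\Delta$ carrying a measure in the Lebesgue class; and exactly as in the proof of Proposition~\ref{5.5} (for $k=\pm1$, via Lemma~\ref{lambda'}) and of Proposition~\ref{5.11} (for $k=0$, via Lemma~\ref{lambda2}), the set of those $\rho$ for which some tetrahedral triangulation fails to be $\rho$-admissible is the countable union $\bigcup_{\mathcal T'}Z_{\mathcal T'}$ of vanishing loci of nontrivial homogeneous Laurent polynomials $L_{\mathcal T'}$ in $\lambda(x),\lambda(y),\lambda(z)$, each of whose vanishing loci descends to a proper Zariski-closed, hence Lebesgue-null, subset of $\Delta$; so this set is Lebesgue-null. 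For (i): I would invoke Theorem~\ref{Kashaev1} to cover $\mathcal M(\Sigma_{0,4})$ by finitely many charts $\mathcal M_{\mathcal T_i}(\Sigma_{0,4})$, in whose lengths coordinates (an open subset of $\mathbb R_{>0}^{E_i}$, with the Goldman measure in the Lebesgue class by~(\ref{WP})) the locus where $\mathcal T_0$ is not $\rho$-admissible is, by iterating Proposition~\ref{change} along a finite sequence of diagonal switches from $\mathcal T_i$ to $\mathcal T_0$, cut out by a nontrivial real-analytic equation (polynomial after clearing the absolute values), hence Lebesgue-null; summing over $i$ gives (i). Combining (i) and (ii), $\mathcal M_k(\Sigma_{0,4})\setminus\Omega_k(\Sigma_{0,4})$ has measure zero.

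For the $Mod(\Sigma_{0,4})$-invariance, let $\phi$ be an orientation-preserving self-diffeomorphism of $\Sigma_{0,4}$ fixing the punctures, and let $[\rho]\in\Omega_k(\Sigma_{0,4})$. The image of $[\rho]$ under the class of $\phi$ is $[\rho\circ\phi_*^{-1}]$, which still lies in $\mathcal M_k(\Sigma_{0,4})$ since the $Mod(\Sigma_{0,4})$-action preserves the relative Euler class. For any tetrahedral triangulation $\mathcal T$ of $\Sigma_{0,4}$, the image $\phi^{-1}(\mathcal T)$ is again a tetrahedral triangulation, and comparing pseudo-developing maps of $\rho$ and $\rho\circ\phi_*^{-1}$ shows that $\mathcal T$ is $(\rho\circ\phi_*^{-1})$-admissible if and only if $\phi^{-1}(\mathcal T)$ is $\rho$-admissible — which holds because $[\rho]\in\Omega_k(\Sigma_{0,4})$. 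As $\mathcal T$ was arbitrary, $[\rho\circ\phi_*^{-1}]\in\Omega_k(\Sigma_{0,4})$, so $\Omega_k(\Sigma_{0,4})$ is $Mod(\Sigma_{0,4})$-invariant.

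The step I expect to be the main obstacle is (i): Propositions~\ref{5.5} and~\ref{5.11} only give the full-measure conclusion inside the single chart $\mathcal M_{\mathcal T_0}(\Sigma_{0,4})$, and an open dense subset of $\mathcal M_k(\Sigma_{0,4})$ need not have full measure, so one genuinely needs the finite chart cover of Theorem~\ref{Kashaev1} together with the fact that non-admissibility of $\mathcal T_0$ is cut out by nontrivial equations in the lengths coordinates of the covering charts; nontriviality here follows from the density of $\mathcal M_{\mathcal T_0}(\Sigma_{0,4})$. One also has to check that the Lebesgue-class identifications are compatible across the different charts, but this is immediate from~(\ref{WP}) and Proposition~\ref{change}.
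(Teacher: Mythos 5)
Your argument is correct, and for the full-measure half it is the same mechanism the paper uses: inside a tetrahedral chart the failure locus of admissibility is a countable union of vanishing loci of (Laurent-)polynomial expressions in $\lambda(x),\lambda(y),\lambda(z)$ coming from Lemmas \ref{lambda'} and \ref{lambda2}, hence Lebesgue-null, and Proposition \ref{WeilP} converts this to a null set for the Goldman measure. The difference is that the paper compresses all of this into one sentence (``by the proof of Proposition \ref{5.5}, $\mathcal M_1\setminus\Omega_1$ is a countable union of measure zero subsets''), silently absorbing your step (i); you are right that this step needs an argument, since open dense (Theorem \ref{Kashaev1}) does not give full measure, and your route through the finite chart cover plus iterating Proposition \ref{change} is a legitimate way to fill it (an alternative is to quote Theorem \ref{Kashaev2}: the image of each $\mathcal R(\mathcal T,\epsilon)$ is the complement of the zero set of a rational function, so chart complements are already null). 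Two small refinements to your step (i): along a path of diagonal switches the non-admissibility locus is a finite union of vanishing loci, one per switch, each defined only where the previous switches were admissible, and the nontriviality of each should be drawn from the density of the corresponding intermediate chart (Theorem \ref{Kashaev1} applied to that triangulation), not only from density of $\mathcal M_{\mathcal T_0}$; also, since admissibility depends only on $\rho$ and not on the decoration, these loci are saturated under the $\mathbb R_{>0}^V$-action, which is what lets the upstairs Lebesgue-null statements descend to the base. For the invariance half you genuinely diverge from the paper: the paper deduces it from Proposition \ref{evenodd} (mapping classes are even compositions of simultaneous diagonal switches, which preserve each $\Omega_k$), whereas you argue directly that a mapping class permutes the set of tetrahedral triangulations and that admissibility is natural under $\rho\mapsto\rho\circ\phi_*^{-1}$; your version is more direct and avoids any appeal to the switch calculus, while the paper's fits the combinatorial framework it has already set up.
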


\begin{proof}
By the proof of Proposition \ref{5.5}, $\mathcal M_1(\Sigma_{0,4})\setminus \Omega_1(\Sigma_{0,4})$ is a countable union of Lebesgue measure zero subsets, hence is of Lebesgue measure zero. Then by Proposition \ref{WeilP}, $\mathcal M_1(\Sigma_{0,4})\setminus \Omega_1(\Sigma_{0,4})$ is a null set in the measure induced by the Goldman symplectic $2$-form. By the similar argument, $\Omega_0(\Sigma_{0,4})$ and $\Omega_{-1}(\Sigma_{0,4})$ are respectively full measure subsets of $\mathcal M_{0}(\Sigma_{0,4})$ and $\mathcal M_{-1}(\Sigma_{0,4}).$ By Proposition \ref{evenodd}, since all the simultaneous diagonal switches act on each $\Omega_k(\Sigma_{0,4}),$ so does $Mod(\Sigma_{0,4}).$
\end{proof}

\begin{remark}
Since $\Omega_{1}(\Sigma_{0,4})$ is dense in $\mathcal M_{1}(\Sigma_{0,4})$ and a representation in $\Omega_{1}(\Sigma_{0,4})\cap\coprod_{i=1}^4\mathcal M_{1}^{s_{i}}(\Sigma_{0,4})$ sends each simple closed curve to a hyperbolic element, by continuity, every representation in $\coprod_{i=1}^4\mathcal M_{1}^{s_{i}}(\Sigma_{0,4})$ sends each simple closed curve to either a hyperbolic or a parabolic element. In \cite{D}, Delgado explicitly constructed a family $\{\rho_t\}$ of representations  in $\mathcal M_1(\Sigma_{0,4})$ that send every simple closed curve to either a hyperbolic or a parabolic element, and for each $\rho_t$, at least one simple closed curve is sent to a parabolic element. Therefore, the representations $\{\rho_t\}$ are in the measure zero subset $\mathcal M_{1}(\Sigma_{0,4})\setminus \Omega_{1}(\Sigma_{0,4}).$ 
\end{remark}

Let $V=\{v_1,\dots,v_4\}$ be the set of punctures of $\Sigma_{0,4}.$ For $k\in \{-1,0,1\}$ and $s\in\{\pm 1\}^V,$ let $\Omega_k^s(\Sigma_{0,4})=\Omega_k(\Sigma_{0,4})\cap \mathcal M_k^s(\Sigma_{0,4}).$ By Theorem \ref{main3'} and Proposition \ref{dense}, Theorem \ref{main4} follows from the following

\begin{theorem}\label{E}
\begin{enumerate}[(1)]
\item The $Mod(\Sigma_{0,4})$-action on $\Omega_0^{s_{ij}}(\Sigma_{0,4})$ is ergodic for each $\{i,j\}\subset\{1,\dots, 4\}.$

\item The $Mod(\Sigma_{0,4})$-action on $\Omega_{1}^{s_+}(\Sigma_{0,4})$ and $\Omega_{-1}^{s_-}(\Sigma_{0,4})$ is ergodic.

\item The $Mod(\Sigma_{0,4})$-action on $\Omega_{1}^{s_{ i}}(\Sigma_{0,4})$ and $\Omega_{-1}^{s_{-i}}(\Sigma_{0,4})$ is ergodic for each $i\in\{1,\dots, 4\}.$
\end{enumerate}
\end{theorem}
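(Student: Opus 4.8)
\emph{Reductions.} The plan is first to pass, via Proposition~\ref{dense}, to the invariant conull subsets $\Omega_k^s(\Sigma_{0,4})=\Omega_k(\Sigma_{0,4})\cap\mathcal M_k^s(\Sigma_{0,4})$, and via Proposition~\ref{WeilP} to work with the Lebesgue measure class on the regions of the simplex $\Delta$ furnished by Corollary~\ref{Delta} and Lemma~\ref{delta}. Since ergodicity of a subgroup action implies ergodicity of the ambient group action, and since by Proposition~\ref{evenodd} a pair of Dehn twists $D_X,D_Y$ along simple closed curves meeting twice is a composition of an even number of simultaneous diagonal switches, it suffices to prove that $\langle D_X,D_Y\rangle$ acts ergodically on each $\Omega_k^s(\Sigma_{0,4})$; concretely this is the dynamics on the regions of $\Delta$ generated by the even words in $S_x,S_y,S_z$ of Lemmas~\ref{lambda'} and~\ref{lambda2}. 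Theorem~\ref{main4} then follows from Theorem~\ref{main3'} and Proposition~\ref{dense}. The three cases of the statement require genuinely different inputs, according to whether a typical representation has a simple closed curve with elliptic image.

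\emph{Cases (1) and (2).} Here a conull set of representations sends some non-peripheral simple closed curve to an elliptic element: for $\Omega_0^{s_{ij}}$ this is Theorem~\ref{main2} together with Lemma~\ref{5.8} (the parabolic locus being a null set), and for $\Omega_{\pm1}^{s_\pm}$ it follows from the proof of Theorem~\ref{main3'} that these components are cut out by the strict triangle inequalities among $\lambda(x),\lambda(y),\lambda(z)$, on which Lemma~\ref{5.2} (resp.\ Lemma~\ref{5.8}) forces $|tr\rho([X])|,|tr\rho([Y])|<2$. I would then run the twist-flow argument of Goldman and March\'e--Wolff~\cite{MW}: $D_\gamma$ preserves $tr\rho([\gamma])$, its level sets through such representations are circles (single orbits of the periodic twist flow generated by a function of $tr\rho([\gamma])$), and $D_\gamma$ acts on almost every such circle as a rotation whose rotation number is a non-constant real-analytic function of $tr\rho([\gamma])$, hence irrational for a.e.\ value; so $D_\gamma$ is ergodic on a.e.\ level set. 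Applying this to $\gamma=X$ and $\gamma=Y$, any $\langle D_X,D_Y\rangle$-invariant function is a.e.\ a function of $tr\rho([X])$ and a.e.\ a function of $tr\rho([Y])$; since by Lemmas~\ref{5.1}, \ref{5.8} and~\ref{delta} the pair $\big(tr\rho([X]),tr\rho([Y])\big)$ is a coordinate system off a measure-zero set, such a function is a.e.\ constant.

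\emph{Case (3).} The components $\Omega_{\pm1}^{s_i}$ contain the counterexamples of Theorem~\ref{main1}: by Lemmas~\ref{5.2} and~\ref{preserve} a conull set of representations there sends \emph{every} simple closed curve to a hyperbolic element, so $D_\gamma$ acts on its (now noncompact) level sets by translation, which is not ergodic, and the previous argument is unavailable. Instead I would analyse the explicit dynamics on the anti-triangular region $\mathcal A=\{(a,b,c)\in\Delta : a>b+c \text{ or } b>a+c \text{ or } c>a+b\}$, which by Lemma~\ref{preserve} is invariant and, through Corollary~\ref{Delta} and Lemma~\ref{delta}, carries a conull subset of $\Omega_{\pm1}^{s_i}$. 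At each tetrahedral triangulation exactly one of the pairs $x,y,z$ realises the strict inequality, and by Lemma~\ref{lambda'} switching at that distinguished pair contracts its $\lambda$-quantity below the absolute difference of the other two; iterating this canonical move therefore walks without backtracking along the tree $\mathcal F^*$ and converges to an end of $\mathcal F^*$, i.e.\ a point of $\partial\mathbb H^2$. I would use this forward process and its natural extension to build a $Mod(\Sigma_{0,4})$-equivariant Borel isomorphism, defined off a null set, from $\mathcal A$ onto a conull subset of the space of geodesics of $\mathbb H^2$, carrying the Lebesgue class to the Liouville class; ergodicity of $\langle D_X,D_Y\rangle$ then follows from the classical ergodicity of the geodesic flow on the modular orbifold (Hopf's argument), equivalently of the $PGL(2,\mathbb Z)$-action on geodesic space.

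\emph{Main obstacle.} In Cases (1) and (2) the only delicate points are the non-constancy (hence a.e.\ irrationality) of the rotation number of $D_\gamma$ on the level sets and the verification that the chosen pair of traces is a coordinate system away from a measure-zero set; both reduce to computations with Lemmas~\ref{5.1}, \ref{5.2} and~\ref{5.8}. The hard part will be Case (3): making the ``end-tracking'' conjugacy precise --- showing the forward walk is well defined on a conull set, that a representation is recovered (off a null set) from the pair of ends of its natural extension, and that the resulting map is nonsingular for the Lebesgue/Liouville measure classes. This requires careful bookkeeping of how the signs $\epsilon$ and the labels $x,y,z$ transform under a sequence of simultaneous diagonal switches (the rule for each switch being case-dependent, Proposition~\ref{change}) and of the tiling of $\mathcal A$ by the translates of a fundamental region under $\langle S_x,S_y,S_z\rangle$.
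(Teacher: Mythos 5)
Your reduction to $\langle D_X,D_Y\rangle$ via Propositions \ref{dense}, \ref{WeilP} and \ref{evenodd} is fine, and your case (2) is essentially the paper's argument (Lemmas \ref{MM3} and \ref{MM5}: on $\Omega_1^{s_+}$ both $X$ and $Y$ are elliptic everywhere, each twist preserves a family of ellipses on which it acts by a rotation with a.e.\ irrational angle, and the two families are transverse; the ``bookkeeping'' you flag is exactly the verification of the formula for $D_X$ across the four sign pieces). Case (1), however, has a genuine gap: by Lemma \ref{5.8}, every representation in $\Omega_0^{s_{ij}}$ sends \emph{every} $Y$-colored and $Z$-colored curve to a hyperbolic element, so the level sets of $tr\rho([Y])$ are noncompact and $D_Y$ acts on them by translation --- by your own observation in case (3), not ergodically. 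Thus the second half of your two-trace argument is unavailable, and the fact that $(tr\rho([X]),tr\rho([Y]))$ is a coordinate system does not help. Moreover, the $D_X$-invariant circles only foliate the subregion where the distinguished $X$-curve of the fixed triangulation is elliptic, which is \emph{not} conull in $\Omega_0^{s_{ij}}$; you still must show that almost every orbit enters that region. The paper repairs both points: it pairs $D_X$ (ellipses $E_k$, Lemma \ref{MM}) with the conjugate $D_YD_Z=S_xD_XS_x$, whose invariant curves $Q_k=S_x(E_k)$ are compact quartics (Lemma \ref{MM2}), checks the two foliations are transverse on a region $R$, and then uses the Trace Reduction Algorithm (Lemma \ref{finite}) together with the parity trick with $S_y$ to show a.e.\ orbit meets $R$.

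Case (3) is also not salvageable as proposed: the end-tracking conjugacy to the space of geodesics is the wrong target, not just a hard step. Writing the quantities as $[\sinh|s|,\sinh|t|,\sinh|s+t|]$ (the paper's substitution), the component with its $Mod(\Sigma_{0,4})$-action becomes, up to a two-fold cover and a null set, the \emph{linear} action of $\Gamma(2)$ on $\mathbb R^2$ with the Lebesgue class, the trace of the curve labelled by a primitive class $(p,q)$ being $2\cosh(ps+qt)$. A representation therefore determines only \emph{one} end of $\mathcal F^*$ --- the projective direction $[s:t]$, along which the traces stay smallest --- and the complementary coordinate is the scale along that line, not a second boundary point. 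The natural extension of your non-invertible ``switch at the largest pair'' map lives on a strictly larger space than $\mathcal A$ (a point of $\mathcal A$ carries no backward itinerary), so no Borel isomorphism from $\mathcal A$ itself onto a conull set of geodesics arises this way; and there is no reason the $\Gamma(2)$-action on $(\mathbb R^2,\mathrm{Leb})$ (a $G/U$-type action) should be measurably isomorphic to its action on the geodesic space with the Liouville class (a $G/A$-type action). The paper's proof instead uses the linear model directly, checks the $\pi$-equivariance of $\psi$ with respect to the generators $D_X,D_Y$ and the standard generators of $\Gamma(2)$, and concludes by Moore's ergodicity theorem; if you want to avoid Moore, you would need an ergodicity argument for the linear $\Gamma(2)$-action on $\mathbb R^2$ itself, not for the geodesic flow.
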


\begin{remark} The ergodicity of the $Mod(\Sigma_{0,4})$-action on the components of $\mathcal M_{\pm1}(\Sigma_{0,4})$ was first known to Maloni-Palesi-Tan in \cite{MPT} using the Markoff triple technique. 
\end{remark}

\subsection{A proof of Theorem \ref{E} (1)}


By symmetry, it suffices to prove the ergodicity of the $Mod(\Sigma_{0,4})$-action on $\Omega_0^{s_{12}}(\Sigma_{0,4}).$ Let $\Delta=\{(a,b,c)\in\mathbb R_{>0}^3\ | a+b+c=1\},$ and let
$\Delta_x=\{(a,b,c)\in\Delta\ |\ a\neq b+c\}.$ By Theorem \ref{Kashaev2},  Lemma \ref{delta} and Corollary \ref{Delta}, given a tetrahedral triangulation of $\Sigma_{0,4},$ $\Delta_x$ is diffeomorphic to a dense and open subset of $\mathcal M_0^{s_{12}}(\Sigma_{0,4}),$ where the diffeomorphism is given by the quantities $\lambda(x),$ $\lambda(y)$ and $\lambda(z).$ 

Consider the embedding of $i:\Delta\rightarrow\mathbb R_{>0}^3$ defined by $i((a,b,c))=(1,b/a, c/a).$ Then 
$$i(\Delta_x)=\{(1,b,c)\in \mathbb R_{>0}^3\ |\ b+c\neq 1\}.$$
Let $\Omega_X$ be the subset of $i(\Delta_x)$ consisting of the elements coming from $\Omega_0^{s_{12}}(\Sigma_{0,4}).$ As an immediate consequences of Lemma \ref{lambda2}, the simultaneous diagonal switches $S_y$ and $S_z$ act on $\Omega_X$ by
$$S_y\big((1,b,c)\big)=\Big(1,\frac{(1-c)^2}{b},c\Big)$$
and
$$S_z\big((1,b,c)\big)=\Big(1,b,\frac{(1-b)^2}{c}\Big).$$

\begin{lemma}\label{MM}
Let $\langle D_X\rangle$ be the cyclic subgroup  of $Mod(\Sigma_{0,4})$ generated by the Dehn twist $D_X$ along the distinguished simple closed curve $X.$ Then
for every $k\in(-2,2),$ the ellipse 
$$E_k=\{(1,b,c)\in \Omega_X\ |\ (b+c-1)^2=(k+2)bc\}$$
 is invariant under the action of  $\langle D_X\rangle,$ and
 for almost every $k\in(-2,2),$ the action of $\langle D_X\rangle$ on $E_k$ is ergodic.
 \end{lemma}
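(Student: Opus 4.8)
The plan is to realise $D_X$ as $S_z\circ S_y$ (this identity is established in the proof of Proposition \ref{evenodd}), to observe that the single scalar quantity $\sigma(b,c)=\tfrac{(b+c-1)^2}{bc}$ is fixed by each of $S_y$ and $S_z$, and then to recognise the restriction of $D_X$ to the level curve $E_k=\{\sigma=k+2\}$ as an irrational rotation of a circle for almost every $k$. That $\sigma\circ S_y=\sigma$ and $\sigma\circ S_z=\sigma$ is a short direct computation from the two formulas displayed just above the statement; conceptually it reflects that $S_y$ and $S_z$ leave the isotopy class of $X$ unchanged and hence preserve $tr\rho([X])$, which by Lemma \ref{5.8}(1) equals $\pm(\sigma-2)$. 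Since $E_k=\{(1,b,c)\in\Omega_X\mid\sigma(b,c)=k+2\}$, it is invariant under the whole group $\langle S_y,S_z\rangle$, and in particular under $\langle D_X\rangle$; this settles the first assertion for every $k\in(-2,2)$.

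For the dynamics, fix $k\in(-2,2)$. The equation $(b+c-1)^2=(k+2)bc$, i.e.\ $b^2-k\,bc+c^2-2b-2c+1=0$, defines a non-degenerate conic of negative discriminant $k^2-4$, hence a real ellipse $\overline{E_k}$, which is a topological circle; one checks it is tangent to the two coordinate axes and meets them only at $(1,0)$ and $(0,1)$, so that $E_k$ is obtained from $\overline{E_k}(\mathbb R)$ by deleting these two points together with the (at most countable) set of non-$\rho$-admissible points. In particular, for all but countably many $k$, $E_k$ is a subset of full Lebesgue measure in the circle $\overline{E_k}(\mathbb R)$. Now $D_X$ is a rational self-map of $\Omega_X$ carrying $E_k$ bijectively onto itself; restricted to the rational curve $\overline{E_k}\cong\mathbb P^1$ it is therefore a birational, hence regular, automorphism $M_k$ — a real Möbius transformation preserving the circle $\overline{E_k}(\mathbb R)$. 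Moreover $S_y$ and $S_z$ are involutions ($S_y^2=S_z^2=\mathrm{id}$, directly from the formulas), each acting on $\overline{E_k}(\mathbb R)$ as a reflection with exactly two real fixed points, so $M_k=S_z\circ S_y$ is orientation preserving; and since $\rho([X])$ is elliptic at every point of $E_k$, $M_k$ is elliptic, i.e.\ conjugate to a rotation of the circle. Concretely, $D_X$ being the Dehn twist along $X$, the twist-flow description of Dehn twists identifies $M_k$ with the rotation by the rotation angle $\theta(k)$ of the elliptic element $\rho([X])$; from $|tr\rho([X])|=2|\cos(\theta(k)/2)|=|k|$ one gets $\theta(k)=2\arccos(|k|/2)$ modulo $2\pi$, a real-analytic strictly monotone, hence non-constant, function of $|k|$ on $(0,2)$.

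It follows that the rotation number $\theta(k)/(2\pi)$ of $M_k$ is a non-constant real-analytic function on $(-2,2)$, so it is irrational for all $k$ outside a countable set. For each such $k$ the transformation $M_k$ is an irrational rotation of $\overline{E_k}(\mathbb R)$, which is uniquely ergodic for Lebesgue measure; since $E_k$ is a conull $M_k$-invariant subset of that circle, and the Lebesgue class on $E_k$ coincides with the class of the conditional measures of the Goldman measure along the level sets of $\sigma$ (Proposition \ref{WeilP} together with Fubini), the action of $\langle D_X\rangle$ on $E_k$ is ergodic. Discarding the countably many exceptional $k$ gives the second assertion.

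The step I expect to be the real obstacle is the claim in the second paragraph that $M_k$ is elliptic with a rotation number that genuinely varies with $k$. One way to make this airtight without invoking twist flows is a direct computation: choose a rational parametrisation $u\mapsto(b(u),c(u))$ of $\overline{E_k}$, substitute it into the rational expression for $S_z\circ S_y$ to write $M_k$ explicitly as an element of $PGL(2,\mathbb R)$ with entries rational in $k$, and read off the conjugacy invariant $\tau(k)=(tr\,M_k)^2/\det M_k$. This $\tau$ comes out to be a non-constant rational — in fact polynomial — function of $k$ taking values in $[0,4)$ on $(-2,2)$, which is precisely the statement that $M_k$ is elliptic for every such $k$, with rotation number a monotone function of $\tau(k)$ that is rational for only countably many $k$. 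Either route reduces the lemma to the elementary ergodic theory of irrational circle rotations.
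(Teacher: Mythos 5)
Your proposal is correct in substance and reaches the same endgame as the paper (an irrational rotation of a circle for almost every $k$), but it justifies the key dynamical step differently. The invariance part is identical: you check $\sigma=(b+c-1)^2/(bc)$ is preserved by the explicit formulas for $S_y$ and $S_z$ and use $D_X=S_zS_y$ from Proposition \ref{evenodd}, exactly as in the paper. For the rotation structure, the paper gives a self-contained synthetic argument: an affine map takes $E_k$ to a circle, $S_y$ and $S_z$ become the ``second intersection'' involutions along two fixed families of parallel chords, and the inscribed angle theorem shows $D_X$ is a rotation by a constant angle $\theta_k$ (which, unwinding the affine normalization, is $2\arccos(\mp k/2)$, in agreement with your $2\arccos(|k|/2)$ up to conventions). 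You instead view the restriction of $D_X$ to the projective conic $\overline{E_k}\cong\mathbb P^1$ as a M\"obius automorphism obtained by composing the two Vieta involutions, and obtain ellipticity and the angle either from Goldman's twist-flow description of Dehn twists (a correct but external ingredient the paper avoids) or from a trace computation whose outcome you assert without performing; that assertion is in fact true (one finds $(\operatorname{tr}M_k)^2/\det M_k=k^2\in[0,4)$, e.g.\ at $k=0$ the map is $(b,c)\mapsto(2-b,2-c)$, rotation by $\pi$), so the fallback does close the argument, but as written ``since $\rho([X])$ is elliptic, $M_k$ is elliptic'' is not by itself a proof. Within your framework there is an even quicker self-contained fix: a fixed point of $S_zS_y$ on the compact real conic would have to be fixed by both involutions, hence be a point where the vertical and the horizontal lines are simultaneously tangent, which is impossible for a smooth conic; so $M_k$ has no real fixed point and is automatically elliptic. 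What your route buys is extra rigor at two points the paper leaves implicit: you explain why the rotation number is a non-constant (real-analytic) function of $k$, so that it is irrational off a countable set, and you address the measure class on $E_k$ (conditional of the Goldman measure versus Lebesgue on the ellipse); what the paper's route buys is complete elementarity, needing neither the twist-flow theorem nor any matrix computation.
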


\begin{proof} 

A direct calculation shows that $E_k$ is invariant under the actions of $S_y$ and $S_z$ for all $k\in(-2,2).$ Recall that $D_X=S_zS_y.$ Therefore,  the ellipse $E_k$ is invariant under the $\langle D_X\rangle$-action. By lemma \ref{lambda2}, the action of $S_y$ and $S_z$ respectively move a point $p$ on $E_k$ vertically and horizontally.  As show in Figure \ref{Fig11}, the an affine transformation of $\mathbb R^2$ sending the ellipse $E_k$ to a circle $C_k$ sends the vertical and the horizontal lines in $\mathbb R^2$ respectively to two family of parallel lines in $C_k.$ As a consequence, for each point $p$ on $C_k,$ the angle $\angle pp'D_X(p)$ is a constant $\theta_k/2$ depending only on $k,$
and the center angle $\angle pOD_X(p)=2\angle pp'D_X(p)=\theta_k.$ Therefore, $D_X$ acts on $C_k$ by a rotation of angle $\theta_k.$ Since $\theta_k$ is an irrational multiple of $2\pi$ for almost every $k\in(-2,2),$ the action of $\langle D_X \rangle$ is ergodic.

\begin{figure}[htbp]
\centering
\includegraphics[scale=0.4]{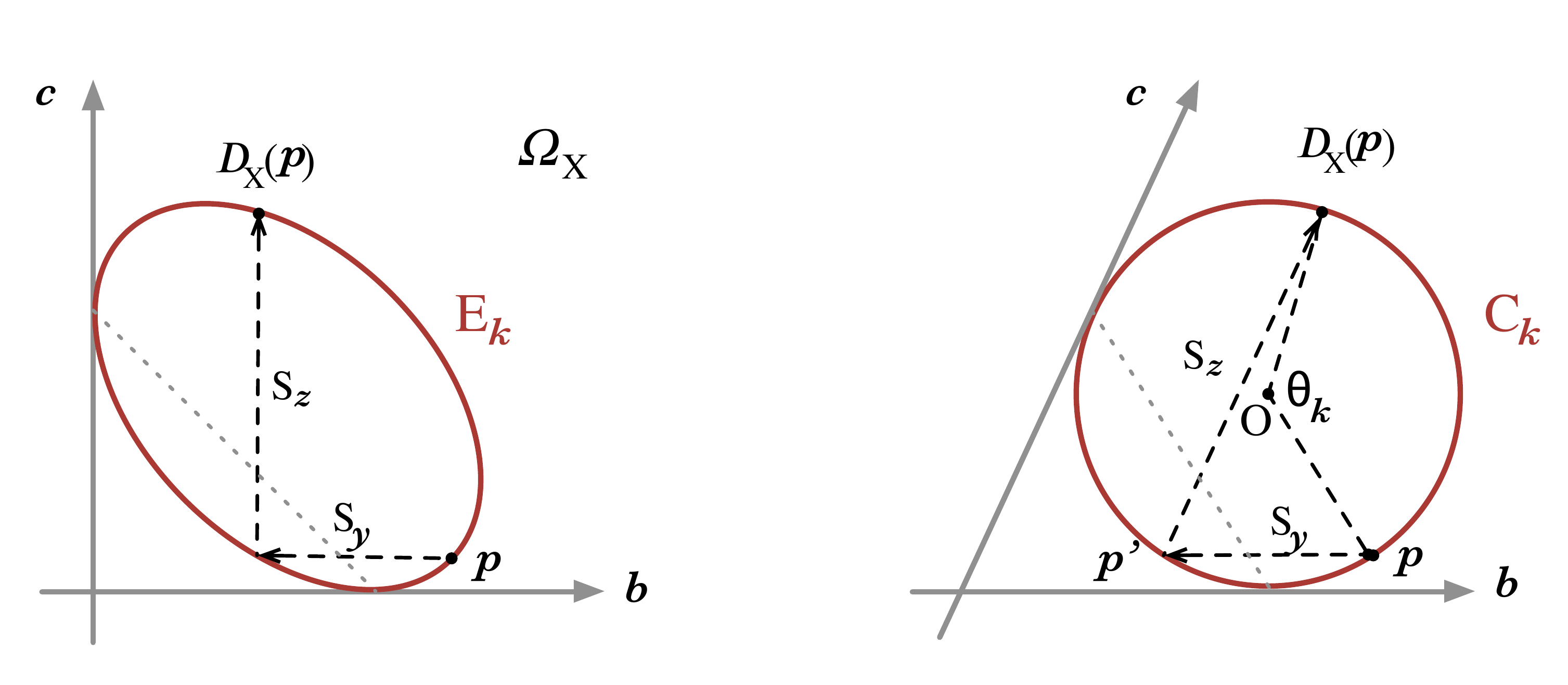}
\caption{}
\label{Fig11}
\end{figure}
\end{proof}

\begin{lemma}\label{MM2}
Let $D_YD_Z$ be the self-diffeomorphism of $\Sigma_{0,4}$ given by the Dehn twist $D_Z$ along the distinguished curve $Z$ in $\mathcal T$ followed by the Dehn twist $D_Y$ along the distinguished curve $Y$ in $D_Z(\mathcal T),$ and let $\langle D_YD_Z\rangle$ be the cyclic subgroup  of $Mod(\Sigma_{0,4})$ generated by $D_YD_Z.$  Then for every $k\in (-2,2),$ the quartic curve
$$Q_k=\{(1,b,c)\in \Omega_X\ |\ (b+c)^2(b+c-1)^2=(k+2)bc\}$$ is invariant under the action of $\langle D_YD_Z\rangle,$ and for almost every $k\in(-2,2),$ the action of $\langle D_YD_Z\rangle$ on $Q_k$ is ergodic. 
\end{lemma}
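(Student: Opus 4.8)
The plan is to reduce the statement to Lemma~\ref{MM} by conjugating with the simultaneous diagonal switch $S_x$. First I would record how $S_x$ acts on $\Omega_X$. By Lemma~\ref{lambda2}~(1), $S_x$ replaces $\lambda(x)$ by $(\lambda(y)+\lambda(z))^2/\lambda(x)$ and leaves $\lambda(y),\lambda(z)$ unchanged; renormalizing the first coordinate to $1$ as in the embedding $i$ this reads
$$S_x\big((1,b,c)\big)=\Big(1,\frac{b}{(b+c)^2},\frac{c}{(b+c)^2}\Big).$$
A one-line computation, using that the last two coordinates of $S_x(1,b,c)$ add up to $1/(b+c)$, shows $S_x\circ S_x=\mathrm{id}$; thus $S_x$ is an involutive diffeomorphism of $i(\Delta_x)=\{(1,b,c)\mid b+c\neq1\}$, and since the condition cutting out $\Omega_X$ (admissibility of every tetrahedral triangulation) is preserved by any simultaneous diagonal switch, $S_x$ restricts to an involution of $\Omega_X$.

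Next I would verify the key algebraic identity: $S_x(E_k)=Q_k$ for every $k\in(-2,2)$. Writing $(u,v)$ for the last two coordinates of the image $S_x(1,b,c)$, one has $u+v=1/(b+c)$ and $uv=bc\,(u+v)^4$, so that the equation $(b+c-1)^2=(k+2)bc$ defining $E_k$ turns into $(u+v)^2(u+v-1)^2=(k+2)uv$, which is precisely the equation defining $Q_k$. Since $S_x$ is an involution it also maps $Q_k$ onto $E_k$; in particular this exhibits $Q_k$ as a quartic curve.

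Now I would use the relations from the proof of Proposition~\ref{evenodd}, namely $D_X=S_zS_y$, $D_Y=S_xS_z$ and $D_Z=S_yS_x$. Reading these as compositions of self-maps of $\Omega_X$, the map $D_YD_Z$ (the Dehn twist $D_Z$ followed by $D_Y$) equals
$$D_YD_Z=S_x\circ S_z\circ S_y\circ S_x=S_x\,D_X\,S_x.$$
Hence $D_YD_Z(Q_k)=S_xD_XS_x(Q_k)=S_xD_X(E_k)=S_x(E_k)=Q_k$, using the previous step and Lemma~\ref{MM}; this proves $Q_k$ is $\langle D_YD_Z\rangle$-invariant. Moreover $S_x$ restricts to a diffeomorphism $E_k\to Q_k$ that conjugates $D_X|_{E_k}$ to $D_YD_Z|_{Q_k}$, since $(D_YD_Z)\circ S_x=(S_xD_XS_x)\circ S_x=S_x\circ D_X$ on $E_k$. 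A diffeomorphism between curves carries the Lebesgue measure class to the Lebesgue measure class and intertwines the two cyclic group actions, so ergodicity transfers: by Lemma~\ref{MM} the $\langle D_X\rangle$-action on $E_k$ is ergodic for almost every $k\in(-2,2)$, and therefore so is the $\langle D_YD_Z\rangle$-action on $Q_k$.

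The main obstacle is the bookkeeping surrounding $S_x$: confirming that $S_x$ really does act on $\Omega_X$ with the displayed formula and is an involution there, and that $D_YD_Z$ is the word in $S_x,S_y,S_z$ written above (replacing $D_X$ by $D_X^{-1}$ here would change nothing, since $\langle D_X\rangle=\langle D_X^{-1}\rangle$). Once these are settled, the remaining content is just the computation $S_x(E_k)=Q_k$ and the standard transfer of ergodicity along a conjugating diffeomorphism.
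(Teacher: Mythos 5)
Your proof is correct and follows essentially the same route as the paper: showing $Q_k=S_x(E_k)$, using $D_YD_Z=S_xS_zS_yS_x=S_xD_XS_x$, and transferring invariance and ergodicity from Lemma \ref{MM} along the equivariant map. You simply fill in more detail (the explicit action of $S_x$ on $\Omega_X$, its involutivity, and the computation $S_x(E_k)=Q_k$), and you correctly use $S_x$ as the conjugating map where the paper's proof has an apparent typo calling it $S_z$.
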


\begin{proof} A direct calculation shows that $Q_k$ is the $S_x$-image of $E_k.$ Since $D_YD_Z=S_xS_zS_yS_x=S_xD_XS_x,$ the map $S_z:E_k\rightarrow Q_k$ is $\mathbb Z$-equivariant, where $1\in\mathbb Z$ acts on $E_k$ by $D_X$ and acts on $Q_k$ by $D_YD_Z.$ By Lemma \ref{MM},  $\langle D_YD_Z\rangle$ acts on $Q_k$ for every $k\in(-2,2),$ and the action is ergodic for almost every $k\in(-2,2).$
\end{proof}

\begin{proof}[Proof of Theorem \ref{E} (1)] 
We show that every $Mod(\Sigma_{0,4})$-invariant measurable function $F:\Omega_X\rightarrow\mathbb R$ is almost everywhere a constant. Consider the following region 
$$R=\{(1,b,c)\in \Omega_X\ |\ (b+c-1)^2<4bc,\ b+c<1\}$$
in $\Omega_X$ inclosed by the parabola 
$P=\{(1,b,c)\in \Omega_X\ |\ (b+c-1)^2=4bc\}$ and the line segment $L=\{(1,b,c)\in \Omega_X\ |\ b+c=1\}.$ We claim that each point $p=(1,b_0,c_0)$ in $R$ is an intersection an ellipse $E_{k_1}$ and a quantic curve $Q_{k_2}$ for some $k_1,$ $k_2\in(-2,2).$ Indeed, we can let $k_1=\frac{(b_0+c_0-1)^2}{b_0c_0}-2$ and let $k_2=\frac{(b_0+c_0)^2(b_0+c_0-1)^2}{b_0c_0}-2.$ Since $(b_0+c_0-1)^2<4b_0c_0,$ $k_1\in(-2,2),$ and since $b_0+c_0<1,$ $k_2\in(-2,2).$ A direct calculation shows that the intersection of $E_{k_1}$ and $Q_{k_2}$ is transverse at $p,$ i.e., the gradients $\nabla E_{k_1}(p)$ and $\nabla Q_{k_2}(p)$ span the tangent space of $\Omega_X$ at $p.$ Then by Lemma \ref{MM} and Lemma \ref{MM2}, the restriction of $F$ to $R$ is almost everywhere a constant. For $p\in \Omega_X,$ let $O(p)$ be the $Mod(\Sigma_{0,4})$-orbit of $p.$ To show that the $F$ is almost everywhere a constant in $\Omega_X,$ it suffices to show that $O(p)\cap R\neq\emptyset$ for almost every $p$ in $\Omega_X.$ Let $R'$ be the region in $\Omega_X$ enclosed by parabola $P,$ i.e., $R'=\{(1,b,c)\in \Omega_X\ |\ (b+c-1)^2<4bc\}.$ Then $R'$ is foliated by the ellipses $\{E_k\}.$ We note that the parabola $P$ is the $i$-image of the inscribe circle $C_0$ of $\Delta.$ Then by the Trace Reduction Algorithm, Lemma \ref{finite} and Proposition \ref{dense}, for almost every $p$ in $\Omega_X,$ there is a composition $\phi$ of finitely many, say $m,$ simultaneous diagonal switches such that $\phi(p)\in R'.$ By Proposition \ref{evenodd}, if $m$ is even, then $\phi\in Mod(\Sigma_{0,4})$ and $O(p)\cap R'\neq\emptyset;$ and if $m$ is odd, then $\phi'=S_y\phi\in Mod(\Sigma_{0,4}).$ Since $S_y$ keeps invariant an ellipse $E_k\subset R'$ passing through $\phi(p),$ $\phi'(p)=S_y\phi(p)\in E_k\subset R',$ and hence $O(p)\cap R'\neq\emptyset.$ Finally, by Lemma \ref{MM}, for almost every $p$ in $R',$ there is $n$ such that $D_X^n(p)\in E_k\cap R\subset R.$\end{proof}


\subsection{A proof of Theorem \ref{E} (2)}

By symmetry, it suffices to prove the ergodicity of the $Mod(\Sigma_{0,4})$-action on $\Omega_1^{s_+}.$ The strategy is to find two transversely intersecting families of curves $\{E_{X,k}\}$ and $\{E_{Y,k}\}$ foliating $\mathcal M_1^{s_+}(\Sigma_{0,4})$ such that the $\langle D_X\rangle$ -action on almost every $E_{X,k}$ and the $\langle D_Y\rangle$-action on almost every $E_{Y,k}$ is ergodic. Let $\mathcal T$ be an tetrahedral triangulation of $\Sigma_{0,4},$ and let $T$ be the set of ideal triangles of $\mathcal T.$ Let $\Delta=\{(a,b,c)\in\mathbb R_{>0}^3\ | a+b+c=1\},$ and for $\epsilon\in\{\pm 1\}^T,$  let $\Delta^c(\mathcal T,\epsilon)=\{(a,b,c)\in\Delta\ |\ a<b+c, b<a+c,c<b+a\}.$
By Theorem \ref{Kashaev2}, Lemma \ref{5.1} and Corollary \ref{Delta}, $\coprod_{i=1}^4\Delta^c(\mathcal T,\epsilon_i)$ is diffeomorphic to a dense and open subset of $\mathcal M_1^{s_+}(\Sigma_{0,4}),$ where the diffeomorphism is given by the quantities $\lambda(x),$ $\lambda(y)$ and $\lambda(z).$ 

 We define the embedding $i_X:\coprod_{i=1}^4\Delta^c(\mathcal T,\epsilon_i)\rightarrow \mathbb R^3$ by
\begin{equation*}
i_X((a,b,c))=\begin{cases}
(1,b/a,c/a), & \text{if }(a,b,c)\in \Delta^c(\mathcal T,\epsilon_1)\\
(1,-b/a,-c/a), & \text{if }(a,b,c)\in \Delta^c(\mathcal T,\epsilon_2)\\
(1,-b/a,c/a), & \text{if }(a,b,c)\in \Delta^c(\mathcal T,\epsilon_3)\\
(1,b/a,-c/a), & \text{if }(a,b,c)\in \Delta^c(\mathcal T,\epsilon_4).\\
\end{cases}
\end{equation*}
For $i\in\{1,\dots, 4\},$ we let $\Omega_{X,i}$ be the subset of $i_X(\Delta^c(\mathcal T,\epsilon_i))$ consisting of the elements coming from $\Omega_1^{s_+}(\Sigma_{0,4}),$ and let $\Omega_X=\coprod_{i=1}^4\Omega_{X,i}.$ (See Figure \ref{Fig12}.) 

\begin{figure}[htbp]
\centering
\includegraphics[scale=0.5]{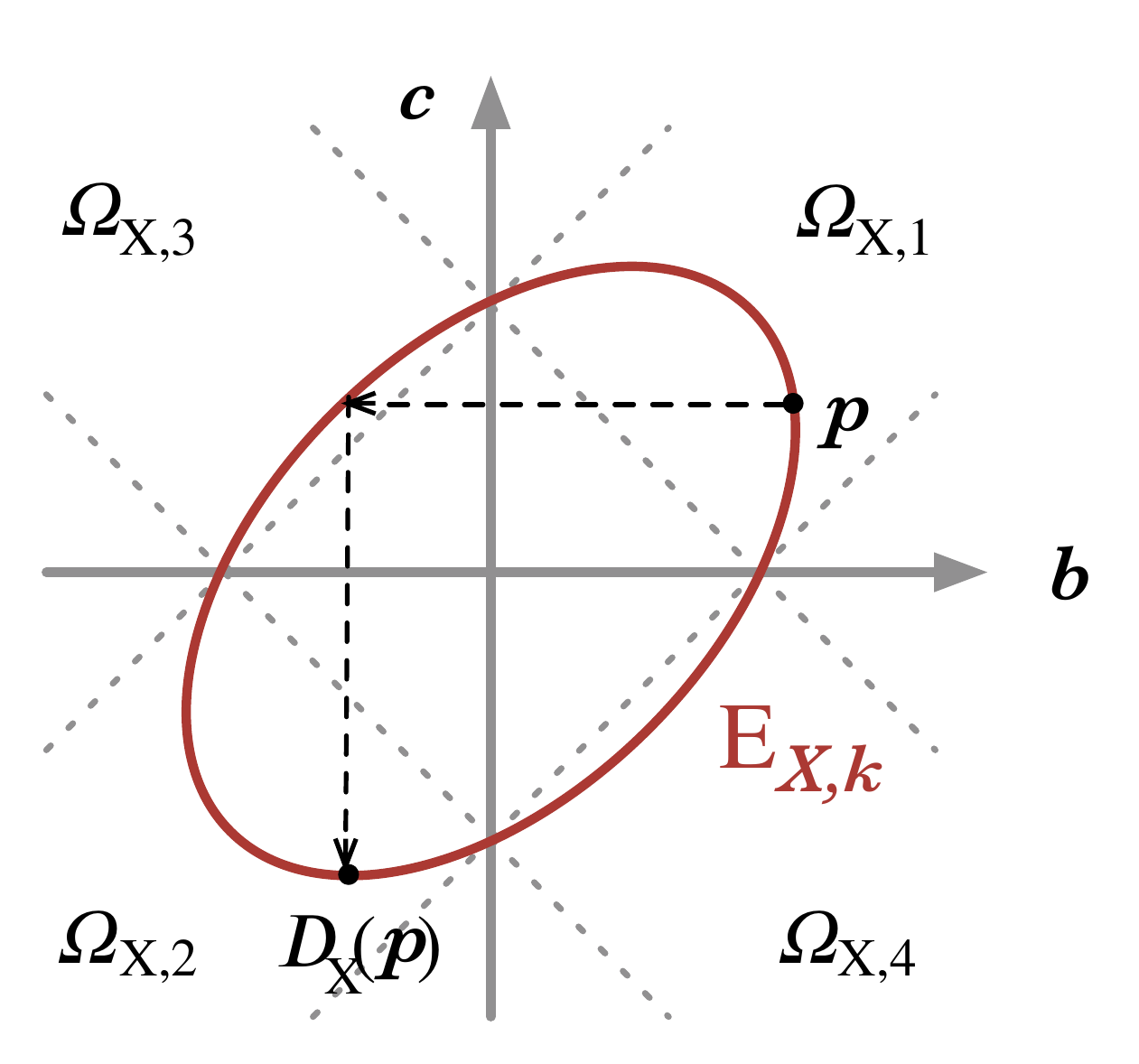}
\caption{}
\label{Fig12}
\end{figure}

\begin{lemma}\label{MM3}
For every $k\in(-2,2),$ the ellipse 
$$E_{X,k}=\{(1,b,c)\in \Omega_X\ |\ b^2+c^2-1=kbc\}$$
 is invariant under the action of  $\langle D_X\rangle,$ and for almost every $k\in(-2,2),$ the action of $\langle D_X\rangle$ on $E_{X,k}$ is ergodic.
 \end{lemma}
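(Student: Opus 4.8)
The plan is to mimic the proof of Lemma \ref{MM} in the relative Euler class $0$ case, adapting the bookkeeping to the sign conventions encoded in the embedding $i_X$. First I would compute, using Lemma \ref{lambda2} (the $e(\rho)=0$ formulas are not the relevant ones here; I would use the $e(\rho)=1$ change-of-$\lambda$ rules from Lemma \ref{lambda'}, since we are on $\mathcal M_1^{s_+}$), how $S_y$ and $S_z$ act on a point $(1,b,c)\in\Omega_X$. Because $S_y$ fixes $\lambda(x)$ and $\lambda(z)$ and changes $\lambda(y)$, and $S_z$ fixes $\lambda(x)$ and $\lambda(y)$ and changes $\lambda(z)$, the induced maps on $(1,b,c)$ move $p$ along a vertical line (the $S_y$-move) and along a horizontal line (the $S_z$-move), exactly as in Lemma \ref{MM}; the absolute-value signs in Lemma \ref{lambda'} are absorbed by the choice of signs built into $i_X$ on the four pieces $\Omega_{X,i}$, so that on $\Omega_X$ the two moves are given by genuine rational (indeed quadratic) formulas without case splitting. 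I expect that after this normalization $S_y$ sends $(1,b,c)\mapsto\bigl(1,(1-c^2)/b,\,c\bigr)$-type expression and $S_z$ sends $(1,b,c)\mapsto\bigl(1,b,(1-b^2)/c\bigr)$-type expression, up to the sign adjustments; checking the precise formula on each of the four charts is the first routine computation.

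Next I would verify by direct substitution that the conic $E_{X,k}=\{b^2+c^2-1=kbc\}$ is preserved by both $S_y$ and $S_z$ for every $k\in(-2,2)$. The point is that fixing $c$ and solving $b^2-kcb+(c^2-1)=0$ for $b$ gives two roots whose relation is governed by Vieta, and the $S_y$-move is precisely the involution swapping these two roots; symmetrically for $S_z$ fixing $b$. Hence $E_{X,k}$ is invariant under the group generated by $S_y$ and $S_z$, and in particular under $D_X=S_zS_y$. The condition $k\in(-2,2)$ is exactly the condition that the quadratic form $b^2+c^2-kbc$ is positive definite, so that $E_{X,k}$ is a genuine (bounded) ellipse; one should also check $E_{X,k}$ meets $\Omega_X$ — i.e. that points of the ellipse actually correspond to type-preserving, all-tetrahedral-triangulation-admissible representations with the right signs — which follows since $\Omega_1^{s_+}$ is an open full-measure set and the ellipses foliate the relevant region.

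For the ergodicity statement I would argue exactly as in Lemma \ref{MM}: apply the linear change of coordinates of $\mathbb R^2$ that carries the ellipse $E_{X,k}$ to a round circle $C_k$. Under this affine map the vertical lines (along which $S_y$ moves points) and the horizontal lines (along which $S_z$ moves points) go to two fixed families of parallel chords of $C_k$. Since reflecting a point of a circle across the two endpoints of a chord in a fixed direction is, by the inscribed-angle theorem, a rotation of $C_k$ by a fixed angle, the composition $D_X=S_zS_y$ acts on $C_k$ by a rotation through some angle $\theta_k$ that depends continuously and non-constantly on $k$. A rotation of the circle is ergodic with respect to arclength iff the angle is an irrational multiple of $2\pi$; since $k\mapsto\theta_k$ is non-constant real-analytic, the set of $k$ for which $\theta_k\in 2\pi\mathbb Q$ is countable, hence of measure zero, giving ergodicity of $\langle D_X\rangle$ on $E_{X,k}$ for almost every $k\in(-2,2)$. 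Finally one checks that the measure induced on $E_{X,k}$ from the disintegration of the Goldman-symplectic measure (Proposition \ref{WeilP}) is in the same measure class as arclength on $C_k$, so ergodicity transfers to the correct measure.

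The main obstacle I anticipate is purely bookkeeping: getting the signs in the four-chart embedding $i_X$ to line up so that $S_y$ and $S_z$ act by the clean quadratic formulas on all of $\Omega_X$ (and not just on the individual pieces $\Omega_{X,i}$), and confirming that these moves really do permute the pieces $\Omega_{X,i}$ correctly. Once that normalization is in place, the invariance of $E_{X,k}$ is a one-line Vieta computation and the ergodicity is the same inscribed-angle argument already used in Lemma \ref{MM}, so no genuinely new idea is needed beyond the one in the $e(\rho)=0$ case.
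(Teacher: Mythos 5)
Your proposal is correct and follows essentially the same route as the paper: the moves are computed from Lemma \ref{lambda'} up to sign, the signs are resolved through the four charts of $i_X$, invariance of $E_{X,k}$ is the Vieta/direct-substitution computation, and ergodicity comes from conjugating $D_X$ by an affine map to a rotation through an angle $\theta_k$ that is irrational for almost every $k$. The only difference is one of packaging: the paper establishes the sign-free formula only for the composite $D_X$ (equation (\ref{MM4})), via the four-case chart analysis with Proposition \ref{change} that you defer as routine bookkeeping, rather than for $S_y$ and $S_z$ separately (whose actions on $\Omega_X$ are clean only up to a simultaneous sign flip of both coordinates, which still preserves each $E_{X,k}$).
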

 
\begin{proof} For $(x,y,z)\in \mathbb R^3,$ let $|(a,b,c)|=(|a|, |b|, |c|).$ By Lemma \ref{lambda'}, the action of $S_y$ and $S_z$ on $\Omega_X$ satisfies 

$$\big|S_y\big((1,b,c)\big)\big|=\Big(1,\Big|\frac{c^2-1}{b}\Big|, |c|\Big)$$
and
$$\big|S_z\big((1,b,c)\big)\big|=\Big(1,|b|,\Big|\frac{b^2-1}{c}\Big|\Big).$$
Therefore, we have
$$\big| D_X\big((1,b,c)\big)\big|=\Big(1,\Big|\frac{c^2-1}{b}\Big|, \Big|\frac{(\frac{c^2-1}{b}\big)^2-1}{c}\Big|\Big).$$
We claim that 
\begin{equation}\label{MM4}
D_X\big((1,b,c)\big)=\Big(1, \frac{c^2-1}{b}, \frac{(\frac{c^2-1}{b}\big)^2-1}{c}\Big).
\end{equation}
If (\ref{MM4}) is true, then a direct calculation shows that $D_X((1,b,c))$ is on $E_{X,k}.$ 

To verify (\ref{MM4}), we let $\mathcal T'$ be the tetrahedral triangulation obtained from $\mathcal T$ by doing $S_y,$ and let $\mathcal T''$ be the tetrahedral triangulation obtained from $\mathcal T'$ by doing $S_z.$ Let $\epsilon'$ and $\epsilon''$ respectively be the signs of $\rho$ assigned to the ideal triangles of $\mathcal T'$ and $\mathcal T''.$  In $\mathcal T,$ $\mathcal T'$ and $\mathcal T'',$ we denote uniformly by  $t_i$ the ideal triangle disjoint from the puncture $v_i.$ If $p=(1,b,c)\in \Omega_{X,1},$ i.e. $i_X^{-1}(p)\in\Delta^c(\mathcal T,\epsilon_1),$ then we consider the following cases.

Case 1: $c>1$ and $\frac{c^2-1}{b}>1.$ In this case, since $\epsilon_1(t_{1})=-1$ and $c>1,$ we have by Proposition \ref{change} that $\epsilon'(t_{2})=-1.$ Since $\frac{c^2-1}{b}>1,$ by Proposition \ref{change} again, $\epsilon''(t_{1})=-1.$ Therefore, $D_X(i_X^{-1}(p))\in \Delta^c(\mathcal T'', \epsilon_1),$ and (\ref{MM4}) follows.

Case 2: $c>1$ and $\frac{c^2-1}{b}<1.$ In this case, by Proposition \ref{change}, $\epsilon'(t_{2})=-1$ and $\epsilon''(t_{4})=-1.$ Therefore, $D_X(i_X^{-1}(p))\in \Delta^c(\mathcal T'', \epsilon_4),$ and 
and (\ref{MM4}) follows.

Case 3: $c<1$ and $\frac{c^2-1}{b}>1.$  In this case, by Proposition \ref{change}, $\epsilon'(t_{4})=-1$ and $\epsilon''(t_{3})=-1.$ Therefore, $D_X(i_X^{-1}(p))\in \Delta^c(\mathcal T'', \epsilon_3),$ and 
and (\ref{MM4}) follows.

Case 4: $c<1$ and $\frac{c^2-1}{b}<1.$ In this case, by Proposition \ref{change}, $\epsilon'(t_{4})=-1$ and $\epsilon''(t_{2})=-1.$ Therefore, $D_X(i_X^{-1}(p))\in \Delta^c(\mathcal T'', \epsilon_2),$ and 
and (\ref{MM4}) follows. 

The verification of (\ref{MM4}) for $p$ in $\Omega_{X,2},$ $\Omega_{X,3}$ and $\Omega_{X,4}$ is similar, and is left to the readers. 

By (\ref{MM4}), the action of $D_X$ on $E_{X,k}$ is a horizontal translation followed by a vertical translation. See Figure \ref{Fig12}. By doing a suitable affine transform, the $\langle D_X\rangle$-action is a rotation of an angle $\theta_k$ on a circle, where $\theta_k$ is an irrational multiple of $2\pi$ for almost every $k.$ Therefore, for almost every $k\in(-2,2),$ the $\langle D_X\rangle$-action on $E_{X,k}$ is ergodic.
\end{proof}

Consider the embedding $i_Y:\coprod_{i=1}^4\Delta^c(\mathcal T,\epsilon_i)\rightarrow \mathbb R^3$ by
\begin{equation*}
i_Y((a,b,c))=\begin{cases}
(a/b,1,c/b), & \text{if }(a,b,c)\in \Delta^c(\mathcal T,\epsilon_1)\\
(-a/b,1,c/b), & \text{if }(a,b,c)\in \Delta^c(\mathcal T,\epsilon_2)\\
(-a/b,1,-c/b), & \text{if }(a,b,c)\in \Delta^c(\mathcal T,\epsilon_3)\\
(a/b,1,-c/b), & \text{if }(a,b,c)\in \Delta^c(\mathcal T,\epsilon_4)\\
\end{cases}
\end{equation*}
For $i\in\{1,\dots, 4\},$ we let $\Omega_{Y,i}$ be the subset of  $i_Y(\Delta^c(\mathcal T,\epsilon_i))$ consisting of the elements coming from $\Omega_1^{s+}(\Sigma_{0,4}),$ and let $\Omega_Y=\coprod_{i=1}^4\Omega_{Y,i}.$ Then we have the following lemma whose proof is similar to that of Lemma \ref{MM3}.

\begin{lemma}\label{MM5}
For every $k\in(-2,2),$ the ellipse 
$$E_{Y,k}=\{(a,1,c)\in \Omega_Y\ |\ a^2+c^2-1=kac\}$$
 is invariant under the action of  $\langle D_Y\rangle,$ and for almost every $k\in(-2,2),$ the action of $\langle D_Y\rangle$ on $E_{Y,k}$ is ergodic.
 \end{lemma}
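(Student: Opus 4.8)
The plan is to run the argument of Lemma \ref{MM3} essentially verbatim, with the pair of opposite edges $x$ interchanged with the pair $y$ (equivalently, the curve $X$ interchanged with $Y$).

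First I would record how $D_Y$ acts on $\Omega_Y$. By Proposition \ref{evenodd} we have $D_Y=S_xS_z$, so $D_Y$ is realized by first performing $S_z$ and then $S_x$; since $S_z$ and $S_x$ both fix $\lambda(y)$, the map $D_Y$ preserves the normalization $\lambda(y)=1$ cutting out $\Omega_Y$. Using Lemma \ref{lambda'} one computes, for $(a,1,c)\in\Omega_Y$, that $\big|S_z\big((a,1,c)\big)\big|=\big(|a|,\,1,\,|\tfrac{a^2-1}{c}|\big)$ and hence $\big|D_Y\big((a,1,c)\big)\big|=\big(\,|\tfrac{(\frac{a^2-1}{c})^2-1}{a}|,\,1,\,|\tfrac{a^2-1}{c}|\,\big)$. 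As in the proof of Lemma \ref{MM3}, the crucial point is to upgrade this to the signed identity
\[
D_Y\big((a,1,c)\big)=\Big(\tfrac{(\frac{a^2-1}{c})^2-1}{a},\ 1,\ \tfrac{a^2-1}{c}\Big),
\]
which is the exact analogue of formula (\ref{MM4}).

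The signed identity is where the only genuine bookkeeping lives, and it is the step I expect to be the main obstacle. To prove it I would track the signs $\epsilon$ of the four ideal triangles through the intermediate triangulations $\mathcal T\to\mathcal T'\to\mathcal T''$ obtained from $\mathcal T$ by $S_z$ and then $S_x$, using Proposition \ref{change} at each switch. This splits into four cases, according to whether $a>1$ or $a<1$ and whether $|a^2-1|/c$ is $>1$ or $<1$; in each case Proposition \ref{change} tells us which triangle changes sign, hence which of the pieces $\Delta^c(\mathcal T'',\epsilon_i)$ — equivalently which branch of the embedding $i_Y$ — the image lands in, and the signed formula follows exactly as in the four cases of Lemma \ref{MM3}. (Alternatively, one may deduce Lemma \ref{MM5} from Lemma \ref{MM3} via the puncture-transposition symmetry of $\Sigma_{0,4}$ that interchanges the pairs $x$ and $y$ while preserving $s_+$, transferring ergodicity directly; but the computation above is what "similar to Lemma \ref{MM3}" refers to.)

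Granting the signed formula, invariance of $E_{Y,k}=\{a^2+c^2-1=kac\}$ under $D_Y$ is an immediate substitution: $S_z$ acts on $E_{Y,k}$ by $(a,c)\mapsto(a,\tfrac{a^2-1}{c})$ and $S_x$ by $(a,c)\mapsto(\tfrac{c^2-1}{a},c)$, and a one-line computation (using $a^2-1=kac-c^2$) shows that each preserves the relation $a^2+c^2-1=kac$, hence so does $D_Y=S_xS_z$. Finally, the ergodicity is verbatim the end of Lemma \ref{MM3}: $D_Y$ moves a point of $E_{Y,k}$ first along a line parallel to one coordinate axis and then along a line parallel to the other, so an affine transformation carrying the ellipse $E_{Y,k}$ to a round circle conjugates $D_Y$ to a rotation by an angle $\theta_k$ (the inscribed-angle argument of Figure \ref{Fig11}); since $\theta_k$ is an irrational multiple of $2\pi$ for almost every $k\in(-2,2)$, Weyl's equidistribution theorem yields ergodicity of the $\langle D_Y\rangle$-action on $E_{Y,k}$ for almost every $k$.
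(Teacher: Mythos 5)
Your proposal is correct and follows essentially the same route as the paper, which proves Lemma \ref{MM5} by repeating the argument of Lemma \ref{MM3} with the roles of the pairs $x$ and $y$ (and the switches $S_y,S_z$ versus $S_z,S_x$) interchanged: compute the unsigned action from Lemma \ref{lambda'}, upgrade to the signed analogue of (\ref{MM4}) by the case-by-case sign tracking via Proposition \ref{change}, and conclude invariance of $E_{Y,k}$ and ergodicity via the affine conjugation to an irrational rotation for almost every $k$. The only caveat is cosmetic: whether one reads $S_xS_z$ as $D_Y$ or $D_Y^{-1}$ (the paper's Proposition \ref{evenodd} is inconsistent on this) is immaterial, since invariance and ergodicity for $\langle D_Y\rangle$ are unaffected.
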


\begin{proof} [Proof of Theorem \ref{E} (2)] A direct calculation shows that the two family of curves $\{i_X^{-1}(E_{X,k})\}$ and $\{i_Y^{-1}(E_{Y,k})\}$ transversely intersect. Then by Lemma \ref{MM3} and Lemma \ref{MM5}, the $Mod(\Sigma_{0,4})$-action on $\mathcal M_1^{s_+}(\Sigma_{0,4})$ is ergodic.
\end{proof}

\subsection{A proof of Theorem \ref{E} (3)}

\begin{proof} By symmetry, it suffices to prove the ergodicity of the $Mod(\Sigma_{0,4})$-action on $\Omega_1^{s_{1}}(\Sigma_{0,4}).$ We let $\Delta_x=\{(a,b,c)\in \Delta\ |\  a>b+c\},$ $\Delta_y=\{(a,b,c)\in\Delta\ |\ b>a+c\}$ and $\Delta_z=\{(a,b,c)\in\Delta\ |\ c>a+b\}.$ By Theorem \ref{Kashaev2},  Lemma \ref{delta} and Corollary \ref{Delta}, given a tetrahedral triangulation of $\Sigma_{0,4},$ $\Delta_x\coprod \Delta_y\coprod\Delta_z$ is diffeomorphic to a dense and open subset of $\mathcal M_1^{s_{1}}(\Sigma_{0,4}),$ where the diffeomorphism is given by the quantities $\lambda(x),$ $\lambda(y)$ and $\lambda(z).$ 

Let 
$$R=\{(s,t)\in \mathbb R^2\ |\ s\neq 0, t\neq 0, s+t\neq 0\},$$
and consider the two-fold covering map $\psi:R\rightarrow \Delta_x\coprod \Delta_y\coprod\Delta_z$ defined by $$\psi((s,t))=\big[\sinh |s|, \sinh |t|, \sinh |s+t|\big],$$
where $[a,b,c]\doteq(\frac{a}{a+b+c},\frac{b}{a+b+c},\frac{c}{a+b+c}).$
Let $\Omega$ be the subset of $\Delta_x\coprod \Delta_y\coprod\Delta_z$ consisting of the elements coming from $\Omega_1^{s_1}(\Sigma_{0,t}),$ and let $\Omega'=\psi^{-1}(\Omega).$ Then by Proposition \ref{dense}, $\Omega$ is invariant under the $Mod(\Sigma_{0,4})$-action. Recall that $Mod(\Sigma_{0,4})$ is isomorphic to a free group $F_2$ of rank two generated by the Dehn twists $D_X$ and $D_Y.$ (See e.g. \cite{FM}.) It is well known that 
$F_2$ is isomorphic to the quotient group $\Gamma(2)/\pm I,$
where 
\begin{equation*}
\Gamma(2)=\Big\{A\in SL(2,\mathbb Z)\ \Big|\ A\equiv
 \left( \begin{array}{cc}
 1 & 0 \\ 
 0&1 \\
\end{array} \right)\ (\text{mod }2)\Big\}
\end{equation*}
is the mod-$2$ congruence subgroup of $SL(2,\mathbb Z),$ and the matrices $\left[\begin{array}{cc}
1 & 0\\
2 & 1\\
\end{array}\right]$ and $\left[\begin{array}{cc}
1 & 2\\
0 & 1\\
\end{array}\right]$ correspond to the generators of $F_2.$ This induces a group homomorphism $\pi:\Gamma(2)\rightarrow Mod(\Sigma_{0,4})$ defined by 
\begin{equation*}
\pi\Big(\left[\begin{array}{cc}
1 & 0\\
2 & 1\\
\end{array}\right]\Big)= D_X \quad \text{and}\quad \pi\Big(\left[\begin{array}{cc}
1 & 2\\
0 & 1\\
\end{array}\right]\Big)= D_Y.
\end{equation*}
By Lemma \ref{lambda'} and a direct calculation, $\psi:\Omega'\rightarrow \Omega$ is $\pi$-equivariant, i.e. $\psi\circ A=\pi(A)\circ \psi$ for all $A\in \Gamma (2),$ where the $\Gamma(2)$-action on $\Omega'$ is the standard linear action. By Moore's Ergodicity Theorem\,\cite{M}, the $\Gamma(2)$-action on $\mathbb R^2,$  hence on $\Omega',$ is ergodic. Therefore, the $Mod(\Sigma_{0,4})$-action on $\Omega$ is ergodic. \end{proof}


\appendix
\section{Equivalence of extremal and Fuchsian representations}\label{A}

Proposition \ref{MF} below is a consequence of Goldman (\cite{G2}, Theorem D) and is stated without proof in \cite{Bo}. The purpose of this appendix is to include a proof of it for the readers' interest, where the argument is from a discussion with F. Palesi and M. Wolff.

\begin{proposition}\label{MF}
A type-preserving representation $\rho:\pi_1(\Sigma_{g,n})\rightarrow PSL(2,\mathbb R)$ is extremal, i.e., $|e(\rho)|=2g-2+n,$  if and only if $\rho$ is Fuchsian.
\end{proposition}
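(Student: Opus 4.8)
The plan is to prove the two implications separately; the forward (``if $\rho$ is Fuchsian'') direction is a short area computation, and the reverse direction carries the weight. For the forward direction: a Fuchsian type-preserving representation is the holonomy of a complete finite-area hyperbolic structure on $\Sigma_{g,n}$, and its smooth $\rho$-equivariant developing map is a pseudo-developing map $D_\rho$. Then $(D_\rho)^*\omega$ descends to $\pm$ the hyperbolic area form on $\Sigma_{g,n}$ (the sign recording whether the hyperbolic orientation matches the fixed orientation), so by Gauss--Bonnet $e(\rho)=\frac{1}{2\pi}\int_{\Sigma_{g,n}}(D_\rho)^*\omega=\pm\frac{1}{2\pi}\big(-2\pi\chi(\Sigma_{g,n})\big)=\pm(2g-2+n)$, i.e.\ $|e(\rho)|=2g-2+n$.

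For the reverse direction, suppose $|e(\rho)|=2g-2+n$. Fix a $\rho$-admissible ideal triangulation $\mathcal T$ (Theorem \ref{Kashaev1}) with set of triangles $T$ and any decoration $d$, giving a lengths coordinate $(\lambda,\epsilon)$. An Euler-characteristic count gives $|T|=2(2g-2+n)$, so by (\ref{euler}) we get $|e(\rho)|=\tfrac12\big|\sum_{t\in T}\epsilon(t)\big|\leqslant\tfrac12|T|=2g-2+n$, with equality exactly when $\epsilon$ is constant; reversing the orientation of $\Sigma_{g,n}$ flips all signs and does not change whether $\rho$ is Fuchsian, so we may assume $\epsilon\equiv 1$. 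I would then straighten a pseudo-developing map of $\rho$ to a piecewise-geodesic $\rho$-equivariant map $D$ sending each lift of each ideal triangle isometrically onto the positively oriented ideal triangle spanned by its developed vertices (this homotopy fixes the cusps, which go to the parabolic fixed points, hence preserves the signs), and argue that $D$ is a homeomorphism onto $\mathbb H^2$: across each edge the two adjacent triangles inherit coherent orientations from $\Sigma_{g,n}$, so both being positively oriented forces their $D$-images onto opposite sides of the common geodesic, giving local injectivity there; around each puncture $v$ every adjacent triangle develops with the parabolic fixed point $p_v$ as an ideal vertex, and with all signs $+1$ the horocyclic segments they cut off at $p_v$ are laid end to end monotonically, so $D$ embeds a horoball neighborhood. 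Hence $D$ is a local isometry for the hyperbolic metric it induces on $\Sigma_{g,n}$; this metric is complete because every end is a genuine cusp (parabolic peripheral holonomy); and the developing map of a complete hyperbolic structure on the simply connected target $\mathbb H^2$ is a homeomorphism, so $\rho(\pi_1(\Sigma_{g,n}))\backslash\mathbb H^2\cong\Sigma_{g,n}$ and $\rho$ is discrete and faithful.

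The main obstacle is the local-injectivity check at the cusps together with the clean passage from ``local isometry and complete'' to ``developing homeomorphism, hence Fuchsian''; the across-an-edge case is just orientation bookkeeping, and the forward direction is routine. A shortcut worth noting, given the machinery already set up: when $\epsilon\equiv 1$ each rational function $\psi_{v,\epsilon}(\lambda)$ is a sum of positive products of $\lambda$-lengths and so never vanishes, whence by Theorem \ref{Kashaev2} the chamber $\mathcal R(\mathcal T,(1,\dots,1))$ is all of $\mathbb R_{>0}^E$ and coincides with the $\mathbb R_{>0}^V$-bundle over Penner's decorated Teichm\"uller space; thus $[(\rho,d)]$ lies over the Teichm\"uller component and $\rho$ is Fuchsian. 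I would present the geometric argument as the substance of the appendix and include this as the quick alternative.
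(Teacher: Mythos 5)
Your split puts the weight on the wrong side, and the side you call routine is where the gap is. In this paper ``Fuchsian'' means only discrete and faithful, so your forward direction begins by asserting exactly the nontrivial point: that a discrete, faithful, type-preserving $\rho$ is the holonomy of a complete finite-area hyperbolic structure on $\Sigma_{g,n}$ \emph{itself}. Since $\pi_1(\Sigma_{g,n})$ is free for $n\geqslant 1$, a discrete faithful image could a priori uniformize a surface of a different topological type (a rank-three free Fuchsian group can uniformize $\Sigma_{1,2}$ as well as $\Sigma_{0,4}$) or a surface with funnel ends, and in that case the equivariant map from $\widetilde{\Sigma_{g,n}}$ is not a developing homeomorphism and the Gauss--Bonnet computation does not return $\pm(2g-2+n)$; indeed Goldman's Theorem D in \cite{G2} says extremality is equivalent to Fuchsian \emph{together with} the quotient being homeomorphic to $\Sigma_{g,n}$, and your argument silently assumes the second condition. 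The paper's proof of Proposition \ref{MF} is precisely the missing step: because every parabolic in a Fuchsian group is peripheral, the induced isomorphism $\phi^{-1}\circ\rho$ carries primitive peripheral elements to primitive peripheral elements, and a quotient $\Sigma_{g',n'}\ncong\Sigma_{g,n}$ is then excluded by the cusp count when $n>n'$ and by the dependence/independence of the peripheral classes in $H_1$ when $n<n'$. Without an argument of this kind your ``routine'' direction is a genuine gap.

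Your reverse direction (extremal $\Rightarrow$ Fuchsian) is sound in outline and is genuinely different from the paper, which obtains that implication immediately from Goldman: the count $|T|=2(2g-2+n)$ together with (\ref{euler}) does force all signs equal, and either your straightened-developing-map argument (local injectivity only needs checking across edges, completeness follows from parabolic peripheral holonomy) or the shortcut through Theorem \ref{Kashaev2} and Penner's identification of the all-positive chamber with the decorated Teichm\"uller space does yield that $\rho$ is Fuchsian. Two small points to make explicit there: extremality forces $e(\rho)\neq 0$, hence $\rho$ is non-elementary, which is needed before invoking Theorem \ref{Kashaev1} to get a $\rho$-admissible triangulation; and the shortcut leans on Penner's theorem (every point of $\mathbb R_{>0}^E$ with all signs $+1$ is realized by a decorated Fuchsian representation) together with the injectivity of the lengths coordinates on that chamber, so say so. What this buys is an Euler-class-to-Fuchsian implication not quoting Goldman's Theorem D; but to repair the proposal as a whole you must still supply the paper's peripheral/homology argument (or an equivalent) for the direction you deferred.
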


\begin{proof} By Goldman\,\cite{G2}, Theorem D, a representation $\rho$ is maximum if and only if $\rho$ is Fuchsian and the quotient $\mathbb H^3/ \rho(\pi_1(\Sigma_{g,n}))$ is homeomorphic to $\Sigma_{g,n}.$ Therefore, to prove the Proposition, it suffices to rule out the possibility that  $\rho$ is non-maximum, Fuchsian and $\mathbb H^3/ \rho(\pi_1(\Sigma_{g,n}))=\Sigma_{g',n'}\ncong \Sigma_{g,,n,},$  which we will do using a contradiction. 

Now since $\mathbb H^3/ \rho(\pi_1(\Sigma_{g,n}))=\Sigma_{g',n'},$ there is an isomorphism
 $$\phi:\pi_1(\Sigma_{g',n'})\rightarrow \rho(\pi_1(\Sigma_{g,n}));$$
and since $\rho$ is type-preserving, $\phi(\pi_1(\Sigma_{g',n'}))= \rho(\pi_1(\Sigma_{g,n}))$ contains $n$  parabolic elements from the primitive peripheral elements of $\pi_1(\Sigma_{g,n}).$ On the other hand, since the only possible parabolic elements of a Fuchsian subgroup of $PSL(2,\mathbb R)$ are from the peripheral elements, the composition
$$\phi^{-1}\circ\rho:\pi_1(\Sigma_{g,n})\rightarrow\pi_1(\Sigma_{g',n'})$$
must send the primitive peripheral elements of $\pi_1(\Sigma_{g,n})$ to the primitive peripheral elements of $\pi_1(\Sigma_{g',n'}).$ This is impossible when $n>n',$ since $\pi_1(\Sigma_{g',n'})$ has only $n'$ primitive peripheral elements. For the case $n<n',$ we recall the fact that in the first homology $H_1(\Sigma,\mathbb R)$ of a punctured surface $\Sigma,$ the full set of vectors represented by the primitive peripheral elements of $\pi_1(\Sigma)$ are linearly dependent, but the vectors in any proper subset of it are linearly independent. Therefore, the induced isomorphism 
$$(\phi^{-1}\circ\rho)^*:H_1(\Sigma_{g,n};\mathbb R)\rightarrow H_1(\Sigma_{g',n'};\mathbb R)$$
sends a set of linearly dependent vectors represented by the primitive peripheral elements of $\pi_1(\Sigma_{g,n})$ to a set of linearly indecent vectors, which is a contradiction.
\end{proof}


\section{Relationship with Goldman's work on one-punctured torus}\label{B}

In this appendix, we show that the results concerning representations of relative Euler class $\pm1$ in this paper can also be seen, and more straightforwardly, as consequences of some previous results of Goldman (\cite{G5}, Chapter 4), where the argument presented here is due to the anonymous referee. 

In \cite{G5}, Goldman considers $SL(2,\mathbb R)$-representations of the one-puncture torus group $\pi(\Sigma_{1,1}),$ which is the free group of two generators $\langle X,Y\rangle.$ The character space $\mathcal M_{\text{red}}(\Sigma_{1,1})$ of reducible representations $\rho:\pi(\Sigma_{1,1})\rightarrow SL(2,\mathbb R)$ satisfy $tr(\rho[X,Y])=2,$ and hence could be described by the equation 
\begin{equation}\label{11}
x^2+y^2+z^2-xyz-4=0
\end{equation}
where $x=tr(\rho(X)), y=tr(\rho(Y))$ and $z=tr(\rho(XY)).$ On the other hand, the fundamental group of the four puncture sphere 
$\pi_1(\Sigma_{0,4})\cong\langle A,B,C,D\ |\ ABCD\rangle,$
where the generators are the four primitive peripheral elements corresponding to the four punctures. If $\rho:\pi_1(\Sigma_{0,4})\rightarrow PSL(2,\mathbb R)$ is type-preserving, then $|tr(\rho(A))|=|tr(\rho(B))|=|tr(\rho(C))|=|tr(\rho(D))|=2;$ and if $e(\rho)=\pm 1,$ then one can lift $\rho$ to a representation 
$\widetilde{\rho}:\pi_1(\Sigma_{0,4})\rightarrow SL(2,\mathbb R)$ such that $tr(\widetilde\rho(A))tr(\widetilde\rho(B))tr(\widetilde\rho(C))tr(\widetilde\rho(D))<0.$ Hence the character spaces $\mathcal M_{\pm 1}(\Sigma_{0,4})$ can be described by the equation 
\begin{equation}\label{04}
x^2+y^2+z^2+xyz-4=0
\end{equation}
where $x=tr(\rho(AB)), y=tr(\rho(BC))$ and $z=tr(\rho(CA)).$ (See e.g. \cite{G,MPT} for more details.) Comparing (\ref{11}) and (\ref{04}), it is clear that 
$$\mathcal M_{\text{red}}(\Sigma_{1,1})\cong\mathcal M_{\pm 1}(\Sigma_{0,4}).$$
Moreover, the mapping class group actions are commensurable and the variables $x,y,z$ correspond in each case to the traces of simple closed curves on the surface, hence all the results known for $\mathcal M_{\text{red}}(\Sigma_{1,1})$ can be translated to the results on $\mathcal M_{\pm 1}(\Sigma_{0,4}).$

To be more precise, by (\cite{G5}, Chapter 4), $\mathcal M_{\text{red}}(\Sigma_{1,1})$ has five connected components, one of which is compact corresponding to $\mathcal M_{\pm 1}^s(\Sigma_{0,4})$ and four of which are non-compact corresponding to $\mathcal M_{\pm 1}^{s_i}(\Sigma_{0,4}).$ A full measure subset of the characters in the non-compact components have all coordinates $x,y,z$ strictly greater than $2$ in absolute value. Each coordinate correspond to the trace of the image of a simple closed curve.  Starting from a representation in one of these components and using the transitivity of the mapping class group action on the set of simple closed curves, one gets that every simple closed curve is sent to an hyperbolic element. Therefore, a full measure subset of representations in the non-compact components are counterexamples to Bowditch's question. Finally, the ergodicity of the $PSL(2,\mathbb Z)$-action on the non-compact components is already proved in (\cite{G5}, Chapter 4), implying the $Mod(\Sigma_{0,4})$-action on $\mathcal M_{\pm 1}^{s_i}(\Sigma_{0,4}).$


\noindent
Tian Yang\\
Department of Mathematics, Stanford University\\
Stanford, CA 94305, USA\\
(yangtian@math.stanford.edu)

\end{document}